\numberwithin{equation}{section}
\theoremstyle{plain}
\newtheorem{thm}{Theorem}[section]
\newtheorem{theorem}[thm]{Theorem}
\newtheorem{lemma}[thm]{Lemma}
\newtheorem{corollary}[thm]{Corollary}
\newtheorem{proposition}[thm]{Proposition}
\theoremstyle{definition}
\newtheorem{remark}[thm]{Remark}
\newtheorem{definition}[thm]{Definition}
\newtheorem{claim}[thm]{Claim}
\newtheorem{assumption}[thm]{Assumption}
\newtheorem{example}[thm]{Example}
\newtheorem{question}[thm]{Question}
\numberwithin{equation}{section}
\newcommand{\Pic}{{\rm Pic}}
\newcommand{\Spec}{{\rm Spec \,}}
\newcommand{\Gal}{{\rm Gal}}
\newcommand{\C}{{\mathbb C}}
\newcommand{\G}{{\mathbb G}}
\renewcommand{\P}{{\mathbb P}}
\newcommand{\Q}{{\mathbb Q}}
\newcommand{\R}{{\mathbb R}}
\newcommand{\Z}{{\mathbb Z}}
\newcommand{\id}{{\rm id\hspace{.1ex}}}
\newcommand{\Aut}{{\rm Aut\hspace{.1ex}}}
\newcommand{\ssec}{\subsection}
\newcommand{\ol}{\overline}
\newcommand{\ti}[1]{\tilde{#1}}
\newcommand{\vast}{\bBigg@{4}}
\newcommand{\Vast}{\bBigg@{5}}
\newcommand{\wt}{\widetilde}
\newcommand{\bA}{\mathbf{A}}
\newcommand{\bk}{\mathbf{k}}
\newcommand{\cS}{\mathcal{S}}
\newcommand{\gO}{\Omega}
\newcommand{\gS}{\Sigma}
\newcommand{\go}{\omega}
\newcommand{\gs}{\sigma}
\newcommand{\Amp}{\mathrm{Amp}}
\newcommand{\KAut}{\mathrm{KAut}}
\newcommand{\Bir}{\mathrm{Bir}}
\newcommand{\Dec}{\mathrm{Dec}}
\newcommand{\Fix}{\mathrm{Fix}}
\newcommand{\GL}{\mathrm{GL}}
\newcommand{\group}{\mathrm{group}}
\newcommand{\Ima}{\mathrm{Im}}
\newcommand{\Ine}{\mathrm{Ine}}
\newcommand{\Ker}{\mathrm{Ker}}
\newcommand{\MW}{\mathrm{MW}}
\newcommand{\NS}{\mathrm{NS}}
\newcommand{\Nef}{\mathrm{Nef}}
\newcommand{\sm}{\mathrm{sm}}
\newcommand{\torsion}{\mathrm{torsion}}
\newcommand{\cnec}{\mathrel{:=}}
\newcommand{\cto}{\circlearrowleft}
\newcommand*\eto{%
	\xrightarrow[]{\raisebox{-0.25 em}{\smash{\ensuremath{\sim}}}}%
}
\newcommand{\hto}{\hookrightarrow}
\title [Infinitely many real forms]{Smooth projective surfaces 
with infinitely many \\ real forms}
\author{{ Tien-Cuong Dinh}}
\address{Department of Mathematics, National University of Singapore, 10, Lower Kent Ridge Road, Singapore 119076}
\email{matdtc@nus.edu.sg}
\author{{ C\'ecile Gachet}}
\address{Université Côte d’Azur, CNRS, LJAD, France}
\email{gachet@unice.fr}
	\author{Hsueh-Yung Lin}
	\address{Department of Mathematics, National Taiwan University, 
	and National Center for Theoretical Sciences,
	Taipei, Taiwan.}
	\email{hsuehyunglin@ntu.edu.tw}
\author{\\ {Keiji Oguiso}}
\address{Graduate School of Mathematical Sciences, The University of Tokyo, Meguro Komaba 3-8-1, Tokyo, Japan, and National Center for Theoretical Sciences, Mathematics Division, National Taiwan University, Taipei, Taiwan}
\email{oguiso@ms.u-tokyo.ac.jp}
\author{{ Long Wang}}
\address{Graduate School of Mathematical Sciences, The University of Tokyo, Meguro Komaba 3-8-1, Tokyo, Japan}
\email{wangl11@ms.u-tokyo.ac.jp}
\author{{ Xun Yu}}
\address{Center for Applied Mathematics, Tianjin University, 92 Weijin Road, Nankai District, Tianjin 300072, P. R. China.}
\email{xunyu@tju.edu.cn}
\thanks{T.-C. Dinh is supported by the NUS grant A-0004285-00-00 
and MOE grant MOE-T2EP20120-0010. 
H.-Y. Lin is supported by the Ministry of Education Yushan Young Scholar Fellowship (NTU-110VV006) and the National Science and Technology Council (110-2628-M-002-006-). 
K. Oguiso is supported by JSPS Grant-in-Aid 20H00111, 20H01809, and by NCTS Scholar Program. 
L. Wang is supported by JSPS KAKENHI Grant Number 21J10242. 
X. Yu is supported by NSFC (No.12071337, No. 11701413, and No. 11831013).
}
\subjclass[2020]{Primary 14J50; Secondary 14L30, 14J28.}
\begin{document}
	
	\maketitle
	\begin{abstract}  
	The aim of this paper is twofold.
	First of all, we confirm a few basic criteria of the finiteness of real forms of a given smooth complex projective variety, in terms of the Galois cohomology set of the discrete part of the automorphism group, the cone conjecture and the topological entropy. We then apply them to show that a smooth complex projective surface has at most finitely many non-isomorphic real forms unless it is either rational or a non-minimal surface birational to either a K3 surface or an Enriques surface. 
	In the second part of the paper, 
	we construct an Enriques surface whose blow-up at one point admits infinitely many non-isomorphic real forms. 
	This answers a question of Kondo to us and also shows the three exceptional cases really occur.
	\end{abstract}
	
	\section{Introduction}

	Let $V$ be a complex algebraic variety.
	A real from of $V$ is a real algebraic variety $W$
	such that
	$$V \simeq W \times_{\Spec \R} \Spec \C.$$
	In his seminal work~\cite{Le18},
	Lesieutre constructed the first  
	smooth complex projective varieties
	with infinitely many non-isomorphic real forms.
	Later, Dinh--Oguiso 
	constructed the first smooth projective surfaces 
	with the same property~\cite{DO19}.
	More examples were constructed in~\cite{DOY22, DOY23}.
	
	For most examples,
	it is also proven in \textit{loc. cit.} that the discrete part of the automorphism group 
	$\Aut(V)/\Aut^0(V)$ is not finitely generated.
	This motivates the following question.
	
	\begin{question}\label{que-RfvsAut}
	Let $V$ be a complex projective variety.
	Suppose that $V$ has infinitely many
	real forms. 
	Does $V$ have large automorphism group $\Aut(V)$
	or group action $\Aut(V) \cto V$ with high complexity?
	\end{question}
	
	Depending on what we mean by
	"large" automorphism group and "high complexity",
	there may be many ways to approach and interpret
	Question~\ref{que-RfvsAut}.
	We will see two
	answers to Question~\ref{que-RfvsAut},
	in \S\ref{ssec-RfvsAut} and \S\ref{ssec-RfvsNef}
	respectively.
	
		\ssec{Real forms and automorphism groups}\label{ssec-RfvsAut}

	Our first answer to Question~\ref{que-RfvsAut}
	relies on the following theorem
	asserted in~\cite[Appendix D]{DIK00}.
	We will provide a proof in Section~\ref{sct2}
	(see Remark~\ref{rem220}).

	\begin{theorem}\label{thm12}
		Let $V$ be a complex projective variety with a real form. If 
		$$H^1({\rm Gal}(\C/\R), {\rm Aut}(V)/{\rm Aut}^0(V))$$ 
		is finite, then $V$ has only finitely many non-isomorphic real forms. Moreover, 
		the set of non-isomorphic real forms of a complex projective variety is at most countable.
	\end{theorem}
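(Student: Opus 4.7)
The plan is to reduce both assertions to controlling the size of the nonabelian Galois cohomology set $H^1(G, \Aut(V))$, where $G := \Gal(\C/\R)$ acts on $\Aut(V)$ by conjugation through the antiholomorphic involution coming from a fixed real form of $V$ (if $V$ has no real form, both statements are trivial). By the classical correspondence, the isomorphism classes of real forms of $V$ are in bijection with $H^1(G, \Aut(V))$, so it suffices to bound this pointed set.

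To this end, I would exploit the short exact sequence of $G$-groups
$$1 \to \Aut^0(V) \to \Aut(V) \to \Aut(V)/\Aut^0(V) \to 1,$$
which yields an exact sequence of pointed sets
$$H^1(G, \Aut^0(V)) \to H^1(G, \Aut(V)) \xrightarrow{\pi} H^1(G, \Aut(V)/\Aut^0(V)).$$
The standard twisting argument in nonabelian cohomology identifies each fiber of $\pi$ over a class $[c]$ with a quotient of $H^1(G, {}_c\Aut^0(V))$, where ${}_c\Aut^0(V)$ is the inner twist of $\Aut^0(V)$ by a cocycle representing $[c]$. This twist is again a connected algebraic $\R$-group, being the identity component of the automorphism group scheme of the real form of $V$ corresponding to $[c]$. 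By the classical finiteness theorem of Borel--Serre for Galois cohomology of connected algebraic groups over $\R$ (which follows from the Chevalley decomposition combined with finiteness of $H^1(\R, A)$ for a real abelian variety $A$ and for connected linear algebraic groups), each set $H^1(G, {}_c\Aut^0(V))$ is finite. Granting the hypothesis that $H^1(G, \Aut(V)/\Aut^0(V))$ is finite, $H^1(G, \Aut(V))$ is then a finite union of finite fibers, hence finite.

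For the countability statement, the automorphism group scheme of $V$ is locally of finite type over $\C$, so its component group $\Aut(V)/\Aut^0(V)$ is countable. Since $|G| = 2$, the set of $1$-cocycles $Z^1(G, \Aut(V)/\Aut^0(V))$ injects into $\Aut(V)/\Aut^0(V)$ and is therefore countable, as is its quotient $H^1(G, \Aut(V)/\Aut^0(V))$. Combined with the finite-fiber argument above, $H^1(G, \Aut(V))$ is at most countable. The main obstacle, I expect, is the careful bookkeeping in nonabelian cohomology: identifying the fibers of $\pi$ as twisted $H^1$ sets, and verifying that each inner twist ${}_c\Aut^0(V)$ is a bona fide connected algebraic $\R$-group to which Borel--Serre's finiteness theorem applies (rather than merely a complex group with a Galois action).
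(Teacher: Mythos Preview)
Your proposal is correct and follows essentially the same route as the paper: reduce to $H^1(G,\Aut(V))$ via the classical correspondence, use the exact sequence coming from $\Aut^0(V)\trianglelefteq\Aut(V)$, control the fibers of $\pi$ by twisting, and invoke finiteness of $H^1$ for connected algebraic groups over $\R$ (the paper proves this step in detail via the Barsotti--Chevalley decomposition, the abelian-variety case, and the linear case, rather than citing it as a Borel--Serre theorem). The countability argument is also identical to the paper's.
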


    \begin{remark}
    Without the 
    projectivity assumption,
    there exist counterexamples to the last statement of
    Theorem~\ref{thm12}, 
    already among affine surfaces by Bot's construction~\cite{Bo21}.
    \end{remark}

	Let $\NS(V)$ denote the N\'eron-Severi group of 
	a projective variety $V$; 
	it is a finitely generated abelian group.
	We will apply Theorem~\ref{thm12} to prove 
	the following corollary. 
	
	\begin{corollary}\label{cor-ent0}

		Let $V$ be a complex projective variety.
		If $\Aut(V)/\Aut^0(V)$, or more generally
		the image of the pullback action 
		$$\rho : \Aut(V)/\Aut^0(V) \to \GL(\NS(V)/\torsion)$$
		is virtually solvable,
		then $V$ has at most finitely many 
		non-isomorphic real forms.
	\end{corollary}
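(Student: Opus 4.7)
By Theorem~\ref{thm12}, the problem reduces to showing that $H^1(\Gal(\C/\R), G)$ is finite, where $G \colonec \Aut(V)/\Aut^0(V)$ (if $V$ admits no real form at all, the conclusion is vacuous). The plan is to exploit the short exact sequence of $\Gal(\C/\R)$-groups
\[
1 \to N \to G \xrightarrow{\rho} \Gamma \to 1,
\]
where $\Gamma \colonec \im \rho$ and $N \colonec \ker \rho$, and to establish finiteness of $H^1$ on each side separately before combining them through the associated exact sequence of pointed Galois cohomology sets.

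First I would show that $N$ is finite. Since $V$ is projective, fix an ample class $\omega \in \NS(V) \otimes \R$. Every element of $N$ acts trivially on $\NS(V)/\torsion$ and therefore fixes $\omega$. By the Lieberman--Fujiki theorem, the stabilizer in $\Aut(V)$ of a K\"ahler class has only finitely many connected components, so its image in $G$ is finite; in particular $N$ is finite, hence so is $H^1(\Gal(\C/\R), N^t)$ for any twist $N^t$.

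Second I would show that $H^1(\Gal(\C/\R), \Gamma)$ is finite. As $\Gamma$ is virtually solvable inside $\GL_n(\Z)$, a theorem of Mal'cev gives that it is virtually polycyclic. Passing to a characteristic (hence $\Gal$-stable) polycyclic subgroup of finite index does not affect the finiteness of $H^1$, so I may assume $\Gamma$ is polycyclic and consider its derived series
\[
\Gamma = \Gamma^{(0)} \triangleright \Gamma^{(1)} \triangleright \cdots \triangleright \Gamma^{(k)} = 1,
\]
whose terms are characteristic with finitely generated abelian subquotients. I would then induct on the derived length $k$ via the exact sequence of pointed sets at each stage, reducing to two sub-claims: $H^1(\Gal, A)$ is finite for any finitely generated abelian $\Gal$-module $A$ (immediate, since it is finitely generated and annihilated by $|\Gal| = 2$), and $H^1$ of each twist of $\Gamma^{(i+1)}$ is finite (handled by the inductive hypothesis, since twisting preserves polycyclicity and derived length).

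The two halves then combine through the exact sequence of pointed sets attached to $1 \to N \to G \to \Gamma \to 1$: every non-empty fiber of $H^1(\Gal, G) \to H^1(\Gal, \Gamma)$ is a quotient of $H^1$ of a twist of the finite group $N$, hence finite; together with the finiteness of $H^1(\Gal, \Gamma)$ this gives the desired finiteness of $H^1(\Gal, G)$. The most delicate point will be the inductive argument for $\Gamma$, specifically tracking the $\Gal$-action on twisted non-abelian coefficients and ensuring that a $\Gal$-stable series with tame quotients can be arranged; this should be handled by the standard formalism of non-abelian Galois cohomology together with the fact that characteristic subgroups are automatically $\Gal$-stable.
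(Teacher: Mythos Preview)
Your proposal is correct and follows essentially the same route as the paper: reduce via Theorem~\ref{thm12} to finiteness of $H^1(G_{\C/\R},\Aut(V)/\Aut^0(V))$, split along $1\to\Ker\rho\to\Aut(V)/\Aut^0(V)\to\Ima\rho\to1$, use Fujiki--Lieberman for the finiteness of $\Ker\rho$, Mal'cev to upgrade virtual solvability inside $\GL_n(\Z)$ to virtual polycyclicity, and a derived-series d\'evissage together with the twisting formalism to finish. The paper packages the last two steps as Proposition~\ref{prop21}(3) and (2); you simply unfold them in place. One cosmetic difference: to obtain a $\Gal$-stable polycyclic subgroup of finite index you invoke a characteristic subgroup, whereas the paper just intersects a given finite-index polycyclic subgroup over the (finite) $\Gal$-orbit, which is a bit more direct but equivalent.
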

	
	Corollary~\ref{cor-ent0} thus 
	provides an answer to Question~\ref{que-RfvsAut}.
	Thanks to
	Tits' alternative, 
	we obtain the following
	more explicit consequences.

	\begin{corollary}\label{cor-ent0bis} 
	    Let $V$ be a complex projective variety
	    with infinitely many 
		non-isomorphic real forms.
		The following statements hold:
		\begin{enumerate}
		    \item $\Aut(V)/\Aut^0(V)$
		    contains a non-abelian free group.
		    \item Assume that $V$ is smooth. Then
		    $V$ admits an automorphism of positive entropy.
		\end{enumerate}
	\end{corollary}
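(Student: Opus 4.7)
The plan is to derive Corollary~\ref{cor-ent0bis} from Corollary~\ref{cor-ent0} combined with Tits' alternative, applied to the image
$\Gamma \colonec \rho(\Aut(V)/\Aut^0(V)) \subset \GL(\NS(V)/\torsion)$, which sits inside $\GL_n(\Z)$ for $n \colonec \rank \NS(V)$. The common starting point for both parts is the contrapositive of Corollary~\ref{cor-ent0}: since $V$ has infinitely many non-isomorphic real forms, the group $\Gamma$ fails to be virtually solvable.

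For (1), Tits' alternative for linear groups in characteristic zero then forces $\Gamma$ to contain a non-abelian free subgroup $F$. To lift $F$ to $\Aut(V)/\Aut^0(V)$, I would pick arbitrary preimages $\tilde a, \tilde b \in \Aut(V)/\Aut^0(V)$ of two free generators $a, b$ of $F$; by the universal property of the free group, the assignment $a \mapsto \tilde a$, $b \mapsto \tilde b$ extends uniquely to a homomorphism $s \colon F \to \Aut(V)/\Aut^0(V)$ with $\rho \circ s$ equal to the identity on $F$. In particular $s$ is injective, so $\Aut(V)/\Aut^0(V)$ contains a copy of $F$.

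For (2), the key ingredient is the standard fact that any subgroup $H \subset \GL_n(\Z)$ all of whose elements have spectral radius equal to $1$ is virtually solvable. Indeed, Kronecker's theorem forces each such element to be quasi-unipotent, so the identity component of the Zariski closure of $H$ is a connected algebraic group whose maximal torus must be trivial (otherwise it would contribute semisimple elements of infinite order), hence is unipotent; virtual solvability of $H$ then follows from Kolchin's theorem. Applied contrapositively to $\Gamma$, this produces an element $g \in \Gamma$ of spectral radius strictly greater than $1$. Choose $\bar f \in \Aut(V)/\Aut^0(V)$ with $\rho(\bar f) = g$ and any representative $f \in \Aut(V)$ of $\bar f$. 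Since $\Aut^0(V)$ acts trivially on cohomology, the pullback $f^*$ acts on the $f^*$-invariant subspace $\NS(V) \otimes \R \subset H^*(V,\R)$ as $g$, hence with spectral radius $>1$. Gromov--Yomdin's theorem, which identifies the topological entropy of $f$ with the logarithm of the spectral radius of $f^*$ on $H^*(V,\R)$, then yields $h_{\topp}(f) > 0$.

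The principal obstacle is the step in (2) that extracts an element of spectral radius $>1$ from mere failure of virtual solvability: Tits' alternative alone only delivers a non-abelian free subgroup, which a priori might consist entirely of quasi-unipotent matrices, and ruling this out via the Zariski closure / Kolchin argument is the technical heart of the argument.
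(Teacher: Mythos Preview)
Your proof is correct. Part (1) is exactly the paper's argument, with the welcome addition of spelling out why a non-abelian free subgroup of $\Gamma$ lifts to $\Aut(V)/\Aut^0(V)$; the paper just writes ``this implies the first statement'' and leaves the section-via-free-generators step implicit.

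For part (2) you take a genuinely different route. The paper simply invokes \cite[Proposition 2.6(1)]{DLOZ22}, which packages the implication ``image of $\rho$ not virtually solvable $\Rightarrow$ some automorphism has positive entropy'' as a black box. You instead reprove this implication directly: Kronecker forces every spectral-radius-one element of $\GL_n(\Z)$ to be quasi-unipotent with uniformly bounded semisimple order, so the condition ``$g^N$ unipotent'' passes to the Zariski closure, killing any maximal torus in the identity component and forcing it to be unipotent by the structure theory of linear algebraic groups. This is the standard argument behind the cited proposition, and your identification of this step as the technical heart is accurate. One small point worth making explicit in your write-up: the reason quasi-unipotence propagates from $H$ to its Zariski closure is precisely the uniform bound $N=N(n)$ on the order of the semisimple part (coming from $\phi(d)\le n$), which turns ``quasi-unipotent'' into the Zariski-closed condition $(g^N-I)^n=0$; without this, a torus in $\overline{H}^0$ could a priori avoid $H$. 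Your sketch gestures at this but does not say it outright. Otherwise the argument is complete, and has the advantage of being self-contained rather than deferring to an external reference.
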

	
	Corollary~\ref{cor-ent0bis}.(2)
	generalizes~\cite[Theorem 1]{Be16}
	from rational surfaces 
	to arbitrary smooth projective varieties and a result of \cite{Ki20}, which are based on Theorem~\ref{thm12}.
	All these corollaries show that if a 
	complex projective variety
	admits infinitely many real forms,
	then its automorphism group
	is necessarily quite complicated.

	\ssec{Real forms and the action on the nef cone}\label{ssec-RfvsNef}

	Inside the $\R$-vector space $\NS(V)\otimes_{\Z} \R$, 
	let $\Amp(V)$ and $\Nef(V)$ 
	denote the ample cone and the nef cone of $V$ respectively.
	Let ${\rm Nef}^{+} (V)$ be the rational hull of $\Nef(V)$, that is, the convex hull of the set 
	$$\left(\NS(V)\otimes_{\Z} \Q\right) \cap \Nef(V).$$ 
	We also let 
	$${\rm Aut}^{*}(V) := {\rm Im}({\rm Aut}(V) \to {\rm GL}(\NS(V)/\torsion))$$
	be the image under the natural action. Then ${\rm Aut}^{*}(V)$ preserves ${\rm Nef}^{+} (V)$. 
	In terms of the action ${\rm Aut}^{*}(V) \cto {\rm Nef}^{+} (V)$,
	the following is another answer to Question~\ref{que-RfvsAut}.

	\begin{theorem}\label{thm13}
		Let $V$ be a complex projective variety such that
		${\rm Nef}^{+} (V)$ 
		contains a rational 
		polyhedral  cone $\Sigma$
		satisfying
		$${\rm Aut}^{*}(V) \cdot \Sigma \supset \Amp(V).$$ 
		For instance, this is the case
		when $\Nef^+(V)$ is a rational polyhedral cone,
		or more generally when
		$V$ satisfies the cone conjecture, in the sense that the natural action of ${\rm Aut}^{*}(V)$
		on $\Nef^+(V)$ has a rational polyhedral fundamental domain.

		Then $V$ has at most finitely many mutually non-isomorphic real forms. 
		In particular, this is the case where $V$ is a minimal surface of Kodaira dimension zero by Sterk \cite{St85}, Namikawa \cite{Na85} 
		and Kawamata \cite[Theorem 2.1]{Ka97}.
	\end{theorem}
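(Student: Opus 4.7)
The plan is to reduce the statement to Theorem~\ref{thm12} by showing that $H^1(\Gal(\C/\R), \Aut(V)/\Aut^0(V))$ is finite (there being nothing to prove when $V$ has no real form). By a Lieberman--Fujiki type result, the kernel $K$ of the natural map $\Aut(V)/\Aut^0(V) \to \Aut^*(V)$ is finite. Once a real structure $\sigma$ on $V$ is fixed, the short exact sequence
\[
1 \to K \to \Aut(V)/\Aut^0(V) \to \Aut^*(V) \to 1
\]
is $\Gal(\C/\R)$-equivariant, and its long exact sequence in non-abelian cohomology reduces the problem to showing that $H^1(\Gal(\C/\R), \Aut^*(V))$ is finite.

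The Galois action on $\Aut^*(V)$ is $g \mapsto \sigma^* g (\sigma^*)^{-1}$, and sending a cocycle $g$ to the involution $\gamma := g\sigma^*$ gives a bijection between $H^1(\Gal(\C/\R), \Aut^*(V))$ and the set of $\Aut^*(V)$-conjugacy classes of involutions in the coset $\Aut^*(V)\cdot\sigma^* \subset \Aut^*(V) \rtimes \Gal(\C/\R)$. Each such $\gamma$ preserves the integral lattice $\NS(V)/\torsion$ and the open cone $\Amp(V)$. Now choose any ample class $a_0 \in \Sigma$, which exists because $\Aut^*(V) \cdot \Sigma \supset \Amp(V)$ and $\Amp(V)$ is nonempty. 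For each involution $\gamma$, the midpoint $y_\gamma := \tfrac{1}{2}(a_0 + \gamma \cdot a_0)$ is ample and $\gamma$-fixed; by the hypothesis there is $h \in \Aut^*(V)$ with $h \cdot y_\gamma \in \Sigma$, and replacing $\gamma$ by its conjugate $h\gamma h^{-1}$ we may assume that $\gamma$ has a fixed ample vector lying in $\Sigma$.

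The heart of the argument, and the main expected obstacle, is the remaining finiteness step: to bound the set of such ``normalized'' involutions modulo $\Aut^*(V)$-conjugation. I would exploit the fact that the $+1$-eigenspace $(\NS(V)\otimes\R)^\gamma$ is a rational linear subspace meeting the relative interior of some face of $\Sigma$. Since the rational polyhedral cone $\Sigma$ has only finitely many faces, there are finitely many candidate eigenspaces, and for each such subspace $L$ the integral involutions of $\NS(V)/\torsion$ with prescribed $+1$-eigenspace $L$ and preserving $\Amp(V)$ form finitely many conjugacy classes under their stabilizer---the non-trivial input being the finite generation and arithmeticity-like behavior of $\Aut^*(V)$ provided by the hypothesis $\Aut^*(V) \cdot \Sigma \supset \Amp(V)$. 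The ``in particular'' clause then follows immediately from the cone conjecture for minimal surfaces of Kodaira dimension zero, established by Sterk~\cite{St85}, Namikawa~\cite{Na85}, and Kawamata~\cite[Theorem 2.1]{Ka97}.
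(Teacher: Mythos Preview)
Your reduction is exactly the paper's: invoke Theorem~\ref{thm12}, use Fujiki--Lieberman to pass from $\Aut(V)/\Aut^0(V)$ to $\Aut^*(V)$, identify $H^1$ with conjugacy classes of involutions in $\KAut^*(V)$, and normalize each involution so that it fixes an ample vector in $\Sigma$ by averaging and then translating by an element of $\Aut^*(V)$. Up to this point your outline matches the paper (which packages the involution--cocycle translation as Lemma~\ref{lem-benzerga} and the finiteness as Proposition~\ref{pro-Ksgfini}).

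The gap is in your ``heart of the argument''. Knowing that the fixed ample vector $u$ lies in the relative interior of some face $\Pi$ of $\Sigma$ does \emph{not} pin down the $+1$-eigenspace of $\gamma$: many rational subspaces meet $\Pi^\circ$, so your sentence ``there are finitely many candidate eigenspaces'' is unjustified. Likewise, appealing to ``arithmeticity-like behavior of $\Aut^*(V)$'' is not an argument; under the stated hypothesis $\Aut^*(V)$ need not be arithmetic in any useful sense. What the paper actually uses at this step is Looijenga's Siegel property \cite[Theorem 3.8]{Lo14}: because the stabilizer in $\Gamma$ of any ample integral class is finite (Fujiki--Lieberman), for each face $\Pi$ of $\Sigma$ the set
\[
\{\, g\in \Gamma : g(\Pi^\circ)\cap \Pi^\circ \cap \Amp(V)\neq\varnothing \,\}
\]
is finite. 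Since your normalized $\gamma$ fixes $u\in\Pi^\circ$, it lies in this set; ranging over the finitely many faces of $\Sigma$ gives a single finite set $\mathcal{S}$ containing a conjugate of every involution. That is the missing input, and without it your sketch does not close.

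Two minor points you should also tighten: (i) in the non-abelian long exact sequence you need finiteness of $H^1$ for every twist $K_\phi$, not just $K$ (harmless here since $K$ is finite, but worth saying); (ii) the case split according to whether $\sigma^*\in\Aut^*(V)$ or not, and the corresponding passage from involutions in the semidirect product to $H^1$, deserves an explicit sentence (cf.\ Lemma~\ref{lem-benzerga}).
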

	
    Essentially the same result as Theorem~\ref{thm13} was asserted by~\cite{CF19}. 
    We will provide a proof of Theorem~\ref{thm13} in Section~\ref{sct2}
    (see Remark~\ref{rem22}).

	\ssec{Smooth projective surfaces with infinitely many real forms}

	Both Corollary~\ref{cor-ent0} and Theorem~\ref{thm13} 
	will be applied to
	complete the proof
	of the following folklore result for surfaces.

	\begin{theorem}\label{thmRFsurf} 
			 Let $S$ be a smooth complex projective surface. Assume that $S$ has infinitely many mutually non-isomorphic real forms. Then $S$ is either rational or a non-minimal surface birational to either a K3 surface or an Enriques surface.
	\end{theorem}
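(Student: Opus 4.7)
The plan is to proceed by cases on the Enriques--Kodaira classification of smooth complex projective surfaces, verifying finiteness of real forms in each class outside the stated exceptions via one of Theorem~\ref{thm12}, Corollary~\ref{cor-ent0}, Corollary~\ref{cor-ent0bis}, or Theorem~\ref{thm13}.

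If $\kappa(S) = 2$, then $\Aut(S)$ is finite by a classical theorem and Theorem~\ref{thm12} applies. All the remaining classes outside the exceptional list---other than non-minimal surfaces birational to an abelian surface---have the property that $S$ is not birational to $\P^2$, a K3, an Enriques, or an abelian surface: this covers $\kappa(S) = 1$, $\kappa(S) = -\infty$ with $S$ non-rational, and all surfaces birational to a bielliptic surface (whether minimal or not). By Cantat's classification of positive-entropy surface automorphisms, no such $S$ admits an automorphism of positive entropy, so Corollary~\ref{cor-ent0bis}(2) yields the finiteness. Alternatively, in each of these classes one may check directly that the image of $\Aut(S)$ in $\GL(\NS(S)/\torsion)$ is virtually solvable---using, respectively, the unique elliptic Iitaka fibration, the unique ruling over a curve of genus $\geq 1$ (together with the Albanese factorisation), and the finiteness of the discrete part of the automorphism group of a bielliptic surface---and apply Corollary~\ref{cor-ent0}.

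For minimal $S$ with $\kappa(S) = 0$, the surface is K3, Enriques, abelian, or bielliptic, and the cone conjecture is known by Sterk~\cite{St85}, Namikawa~\cite{Na85}, and Kawamata~\cite{Ka97}; Theorem~\ref{thm13} then concludes.

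The remaining case, which I expect to carry the main technical burden, is when $S$ is non-minimal and birational to an abelian surface. Here $\Aut(S)$ can be infinite and contain positive-entropy elements: already for $S = \mathrm{Bl}_0(E \times E)$ with $E$ a non-CM elliptic curve, $\Aut(S) \supset \GL_2(\Z)$, so neither Corollary~\ref{cor-ent0} nor Corollary~\ref{cor-ent0bis} is available. The plan is to exploit the uniqueness of the minimal model in Kodaira dimension zero: the blow-down $\pi\colon S \to A$ is canonical, every automorphism of $S$ descends to an automorphism of $A$ preserving the finite set $P \subset A$ of images of the $(-1)$-curves, and $\Aut(S)$ embeds in $\Aut(A, P)$ with the induced action on $\NS(S)_\R = \pi^* \NS(A)_\R \oplus \bigoplus_i \R\, E_i$ determined by the action on $\NS(A)_\R$ up to permutation of the $E_i$. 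Using this, I would promote Kawamata's rational polyhedral fundamental domain for $\Aut(A)^* \cto \Nef^+(A)$ to a rational polyhedral cone $\Sigma \subset \Nef^+(S)$ whose $\Aut(S)^*$-translates cover $\Amp(S)$, and conclude via Theorem~\ref{thm13}. The hard part is precisely this transfer: the boundary of $\Nef(S)$ is cut out by Seshadri-type inequalities at the points of $P$, and one must verify that the translates of $\Sigma$ truly exhaust $\Amp(S)$ rather than a proper sub-cone.
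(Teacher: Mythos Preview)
Your treatment of the cases $\kappa(S) \ge 1$, irrational ruled, bielliptic, and minimal $\kappa(S) = 0$ matches the paper's approach (Propositions~\ref{prop32}, \ref{pro-IRHE}, and Theorem~\ref{thm13}).

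The genuine gap is in the non-minimal abelian case. Your plan is to lift Kawamata's rational polyhedral fundamental domain for $\Aut(A)^*$ on $\Nef^+(A)$ to one for $\Aut(S)^*$ on $\Nef^+(S)$. But $\Aut(S)$ embeds only into the stabilizer $\Aut(A,P)$ of the blow-up locus, which can be a subgroup of infinite index in $\Aut(A)$; Kawamata's fundamental domain for the full group says nothing about such a subgroup. There is also no general ``cone conjecture for blow-ups'' principle to invoke---indeed, its failure for non-minimal K3 and Enriques surfaces is exactly why those are exceptional in the theorem---so the Seshadri-boundary computation you flag as the hard part is not just a technicality but a genuinely open-ended step with no clear reason to succeed.

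The paper avoids this entirely. In Proposition~\ref{prop34} it never touches $\Nef^+(S)$; instead it analyses $\Aut(S)$ directly via the blow-up centers. For $k=1$ blown-up point, $\Aut(S) \simeq \Aut_{\group}(A)$ is arithmetic and Proposition~\ref{prop21}(1) applies. For $k \ge 2$, one looks at a length-two subscheme $\Sigma \subset A$ in the center and splits into three cases: if $\Sigma$ is a tangent vector at one point, $\Dec(A,\Sigma)$ is solvable (upper-triangular in $\GL(T_{A,o})$), hence $\Aut(S)$ is virtually polycyclic; if $\Sigma = \{o,P\}$ with $P$ non-torsion, a Zariski-closure argument forces $\Dec(A,o,P)$ to be finite; if all centers are torsion, $\Aut(S)$ is squeezed between finite-index subgroups of $\Aut_{\group}(A)$ and is again arithmetic. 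In every case Proposition~\ref{prop21} gives finiteness of $H^1$ directly, bypassing Theorem~\ref{thm13}. This group-theoretic route is the missing ingredient in your proposal.
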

	
	Theorem~\ref{thmRFsurf} should be known to experts. 
	We will give a proof in Section~\ref{sct3}
	(due to Remarks~\ref{rem220} and~\ref{rem22}), 
	along the line explained by \cite{DIK00} and \cite{CF19} with clarifications for the sake of completeness. 
	Along the way, we also prove some results 
	which hold in arbitrary dimension 
	(e.g. Proposition~\ref{prop32}). 
	
	Our previous result of~\cite{DOY23} shows that there is a smooth projective rational surface $S$ with infinitely many mutually non-isomorphic real forms, which answers a question by \cite{Kh02}. There is also a smooth projective surface $S$ which is a blow-up of some K3 surface at one point such that $S$ admits infinitely many mutually non-isomorphic real forms. Such a surface $S$ is constructed first by \cite{DOY23} after \cite{DO19}, answering a question of Mukai to us.

	Given the above surface examples and Theorem~\ref{thmRFsurf},
	Kondo asked us whether
	there exists a surface $S$ as in Theorem~\ref{thmRFsurf}
	which is birational to an Enriques surface.
    The second half of our paper (starting from Section~\ref{sec-Kummer})
    is entirely devoted to
    the construction of such examples.

	\begin{thm}\label{thm11}
	There is a blow-up $Z$ of an Enriques surface at one point 
			such that
			\begin{enumerate}
			    \item $Z$ admits infinitely many non-isomorphic real forms. 
			    \item $\Aut(Z)$ is not finitely generated.
			\end{enumerate}
	\end{thm}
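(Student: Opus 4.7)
The plan is to realize $Z$ as the blow-up at a real fixed point of an Enriques surface $S$ equipped with both a real structure $\sigma_0$ and an infinite-order \emph{real-reversing} automorphism $g \in \Aut(S)$, i.e., one satisfying $\sigma_0 g \sigma_0 = g^{-1}$. Given such data, the strategy successfully developed in \cite{DO19, DOY22, DOY23} yields an infinite family of mutually non-conjugate anti-holomorphic involutions on $Z$, hence infinitely many non-isomorphic real forms.

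Concretely, one constructs $S$ by way of its K3 double cover $\pi : X \to S$, which the heading of Section~\ref{sec-Kummer} suggests will be taken of Kummer type, so as to access the rich discrete automorphism group inherited from the underlying abelian surface. One seeks $X$ together with a fixed-point-free involution $\iota$ with quotient $S$, an anti-holomorphic involution commuting with $\iota$ and descending to $\sigma_0$, and an infinite-order automorphism of $X$ commuting with $\iota$ and satisfying the real-reversing condition, descending to $g \in \Aut(S)$. A real point $p \in S(\R)$ fixed by $g$ is then blown up to produce $Z := \mathrm{Bl}_p(S)$; both $g$ and $\sigma_0$ lift canonically to $\tilde g \in \Aut(Z)$ and $\tilde{\sigma}_0$, preserving the relation $\tilde{\sigma}_0 \tilde g \tilde{\sigma}_0 = \tilde g^{-1}$.

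For each $n \in \Z$, the composition $\sigma_n := \tilde g^n \circ \tilde{\sigma}_0$ is an anti-holomorphic involution, since the real-reversing relation gives $\tilde{\sigma}_0 \circ \tilde g^n = \tilde g^{-n} \circ \tilde{\sigma}_0$ and hence $\sigma_n^2 = \tilde g^n \tilde g^{-n} \tilde{\sigma}_0^2 = \id$. To show that the real forms $Z^{\sigma_n}$ are mutually non-isomorphic, one must verify that the classes $[\tilde g^n] \in H^1(\Gal(\C/\R), \Aut(Z))$ take infinitely many values. This is achieved by pushing forward to $\GL(\NS(Z)/\tors)$, on which the pullback action of $\Aut(Z)$ is faithful up to a finite kernel (since $\Aut^0(Z)$ is trivial on surfaces of this type), and separating the classes by invariants such as the characteristic polynomial of $\tilde{\sigma}_{0,\ast} \circ \tilde g^n_\ast$ or its spectral radius, exactly as in the proof of Corollary~\ref{cor-ent0bis}.(2).

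For assertion~(2), the non-finite generation of $\Aut(Z)$ follows from a geometric argument in the spirit of \cite{Le18, DO19, DOY23}: one exhibits an infinite orbit of smooth rational curves on $Z$ (typically strict transforms of $(-2)$-curves on $S$ displaced by iterates of $\tilde g$) whose negative self-intersections force any finite generating set of $\Aut(Z)$ to omit all but finitely many of the induced symmetries. The main obstacle is \textbf{Step 1}, namely producing an Enriques surface with \emph{all} the compatibilities listed above — a real structure lifting to the K3 cover, a fixed-point-free involution, an infinite-order real-reversing automorphism, and a real fixed point on $S$. This is a delicate simultaneous lattice-and-geometry problem, and the custom Kummer construction of Section~\ref{sec-Kummer} is presumably designed precisely to overcome it; a secondary technical point is to verify in \textbf{Step 3} that no two $[\tilde g^n]$ accidentally become cohomologous in $H^1(\Gal(\C/\R), \Aut(Z))$.
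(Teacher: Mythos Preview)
Your overall architecture diverges substantially from the paper's, and the two steps you flag as routine are precisely where your sketch has genuine gaps.

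\textbf{How the paper actually proceeds.} The paper does \emph{not} use a single infinite-order real-reversing automorphism. Instead, it arranges matters so that the Galois action on $\Aut(X)$ (and hence on $\Aut(Y)$) is \emph{trivial}: every automorphism of the Kummer K3 $X=\Km(E\times F)$ is already real (Lemma~\ref{lem-real}). By Proposition~\ref{prop21}(4), real forms of $Y$ then correspond to conjugacy classes of \emph{involutions} in $\Aut(Y)$. The infinite supply of involutions is $\psi_n = f^{-4n}\circ\psi\circ f^{4n}$, where $\psi$ is the inversion of one elliptic fibration and $f$ is a Mordell--Weil translation on another; these are all $\Aut(X)$-conjugate to $\psi$, but the point is that they are mutually non-conjugate in the much smaller group $\Ine(X,C_{23})\supset\Aut(Y)$. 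The separation is \emph{not} via $\NS$: it uses finiteness of $\mathrm{Cent}(\psi)$ (Proposition~\ref{prop43}(1)) together with the explicit formula $\psi_n|_{E_2}(x)=r^{-n}-x$ and the transcendence of $r$. Likewise, non-finite generation is \emph{not} proved by an orbit-of-curves argument; one builds a homomorphism $\Aut^s(Y)\to(\C,+)$ whose image contains $\{r^{-n}-r^{-m}:m,n\in\Z\}$, an abelian group of infinite rank.

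\textbf{Gaps in your proposal.} First, your separation step does not work as stated. The invariants you propose, namely the characteristic polynomial or spectral radius of $\tilde\sigma_{0,*}\tilde g^n_*$ on $\NS(Z)$, do not distinguish Galois cohomology classes: equivalence of real structures is conjugacy in the full $\Aut(Z)$, and there is no reason two involutions (or twisted actions) with the same $\NS$-invariants must be conjugate, nor that distinct cohomology classes give distinct $\NS$-actions. In the paper's construction the $\psi_n$ are even conjugate in $\Aut(X)$, so any invariant coming from the $\NS(X)$-representation is useless; the separation has to be carried out inside the inertia group of a curve, via a centraliser argument. Second, your argument for (2) is circular: an infinite orbit of $(-2)$-curves under a single $\tilde g$ is entirely compatible with $\Aut(Z)$ being finitely generated (indeed, generated by $\tilde g$ alone together with finitely many others). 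What forces non-finite generation in the paper is an \emph{abelian} quotient of $\Aut(Y)$ that is not finitely generated as a $\Z$-module, and producing this requires the explicit coordinate computation on $E_2$ and the transcendence of $r(s,t)$.
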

	
	\begin{remark}
	For (2), surfaces whose automorphism groups are not finitely generated have been 
	previously constructed among blow-ups 
	of Enriques surfaces at at least two points~\cite{KO19, Wa21}.
	\end{remark}
	
	Our construction 
	is inspired by \cite{Le18}, \cite{DO19}, \cite{DOY23} and \cite{Mu10}. 
	We prove Theorem~\ref{thm11} in Sections~\ref{sct5},
	and refer to Theorem~\ref{thm32} and Remark~\ref{rem_dense}
	for more precise statements.
	By Theorem~\ref{thm11}, together with our previous work~\cite{DOY23}, 
	we conclude that
	the three cases in Theorem~\ref{thmRFsurf} all occur.

	\medskip\noindent
	{\bf Acknowledgements.} We would like to thank Professor J.-H. Keum for substantial discussions in our earlier works and Professors B. Lian and S. Kondo for valuable discussions, which are much reflected in this paper. 
	C.G. would like to thank JSPS Summer Program for providing the opportunity to visit K.O. and L.W. in Tokyo, where part of this paper was written. 
	L.W. thanks Department of Mathematics at National University of Singapore, Professor D.-Q. Zhang 
	and Doctor J. Jia 
	for warm hospitality.
	
	\medskip\noindent
	{\bf Notation and convention.}  
	We work over the field $\C$ of complex numbers, and refer to \cite{BHPV04} for basic definitions and properties of complex projective surfaces. 
	
	In this paper,
	by a point of a projective variety $V$ over ${\mathbb C}$, we always mean a point of $V({\mathbb C})$, i.e., a ${\mathbb C}$-valued point of $V$, except a generic point by which we always mean a generic point in the scheme theoretic sense. A \textit{locally algebraic group} is a group scheme locally of finite type over a field.

	For every scheme $V$ over a field $\Bbbk$ 
	(in our paper $\Bbbk$ will be $\R$ or $\C$),
	we let $\Aut(V/\Bbbk)$ denote the 
	group of biregular automorphisms of $V$ over $\Bbbk$.
	We also write $\Aut(V) = \Aut(V/\Bbbk)$
	if there is no risk of confusion and, unless stated otherwise, we regard $\Aut(V) = \Aut(V/\Bbbk)$ as an abstract group (not as a group scheme). Note that if $V$ is defined over ${\mathbb R}$ and $\Aut(V/{\mathbb C}) = \{\id_V\}$, then the Galois group ${\rm Gal}({\mathbb C}/{\mathbb R})$ acts trivially on the abstract group $\Aut(V/{\mathbb C})$, whereas it acts as an involution on the group scheme $\Aut(V/{\mathbb C}) \to {\rm Spec}\, {\mathbb C}$.
	Given a morphism $f : X \to B$ of varieties,
	we define $\Aut(X/B)$ (resp. $\Bir(X/B)$)
    as the group of automorphisms (resp. birational automorphisms) $\phi$
    preserving $f$ and acting as the identity on $B$.

	For a complex variety $V$,
	we define the decomposition group and the inertia group of subsets $W_1,\ldots,W_n \subset V$ by
	\begin{align*}
	{\rm Dec} (V, W_1,\ldots,W_n) 
	&:= \{ f \in {\rm Aut}(V) \,|\,\forall i,\,f(W_i) = W_i \}, \quad\,\\
	{\rm Ine} (V, W_1,\ldots,W_n) 
	&:= \{ f \in {\rm Dec} (V, W_1,\ldots,W_n) \,|\,\forall i,\, f_{W_i} = \id_{W_i}\}.
	\end{align*}
	Note then that
	$${\rm Dec} (V, W_1, \ldots, W_n) \subset {\rm Dec} (V, \cup_{i=1}^{n} W_i),$$
	and for an irreducible decomposition $W = \cup_{i=1}^{n} W_i$ of an algebraic set $W \subset V$, 
	$$[{\rm Dec} (V, \cup_{i=1}^{n} W_i) : {\rm Dec} (V, W_1, \ldots, W_n)] \le |S_n| = n!.$$
	For an automorphism $f \in {\rm Aut}(V)$, we denote the set of fixed point of $f$ by
	$$V^f := \{x \in V({\mathbb C})\,|\, f(x) = x\}.$$
	
	We refer to e.g.~\cite[Section I.5]{Se02} 
	for the basic facts on the group cohomology set $H^1(G, B)$ of a $G$-group $B$. 
	In this paper, we only need the non-trivial simplest case where 
	$$G = G_{\C/\R} \cnec {\rm Gal}({\mathbb C}/{\mathbb R}) \simeq {\mathbb Z}/2{\mathbb Z}.$$
	
	
	\section{Two basic criteria of finiteness of real forms}\label{sct2}
	
	In this section, 
	we first recall the notion of real forms and some classical results
	due to Borel, Serre, and Weil,
	in order to fix some notations.
	We will then prove Theorems~\ref{thm12} and~\ref{thm13}. 
	
	\ssec{Real forms and real structures} 
	\hfill
	
	Throughout the paper,
	$c : \C \to \C$
	denotes the complex conjugate, so
	$$G_{\C/\R} = \Gal(\C/\R) = \left\{ \id_{\C}, c \right\}.$$
	
	Let $V$ be a scheme over $\C$
	and let $\pi : V \to \Spec \C$ be the structural morphism.
	
	\begin{definition}\label{def21} 
		\hfill
		\begin{enumerate}
			\item A {\it real form} of $V$
			is a scheme $W$ over $\R$ such that
			$$V \simeq W \times_{\Spec \R} \Spec \C$$
			over $\Spec \C$.
			\item
			A {\it real structure} of $V$ is an anti-holomorphic involution  
			$$\imath : V \to V,$$ 
			namely $\imath$ is an automorphism over $\Spec \R$ such that
			$$\imath^2 = \id_V \ \ \text{ and } \ \ \pi \circ \imath = c \circ \pi.$$
		\end{enumerate}
		Two real forms $W$ and $W'$ are equivalent
	if they are isomorphic over $\Spec \R$.
	Two real structures $\imath$ and $\imath'$ on $V$ are said to be equivalent if 
	$\imath' = h \circ \imath \circ h^{-1}$ 
	for some $h \in {\rm Aut}(V/\C)$. 
	\end{definition} 
	
	The real structure associated to a real form $W$ of scheme $V$ over $\C$ 
	is defined as
	$$\imath_{W} \cnec  \id_W \times c : V \to V,$$ 
	if one fixes an identification $V = W \times_{\Spec \R} \Spec \C$.
	Assume that $V$ is a quasi-projective variety. 
	As a consequence of Galois descent,
	the map $W \mapsto \imath_W$ defines a one-to-one correspondence
	\begin{equation}\label{1-1R}
		\xymatrix{
			\left\{ \text{Real forms on }V \right\}/\simeq \ \ \ar@{<->}[rr] & & 
			\ \ \left\{ \text{Real structures on }V \right\}/\simeq. \\
		}
	\end{equation}
	
	\begin{example}\label{ex-RF}
		\hfill

		\begin{enumerate}
			\item Let $W$ be a real form of a complex scheme $V$.
			Then $G_{\C/\R}$ acts naturally on the group scheme $\Aut(V/\C)$ by
			\begin{equation}\label{eqn-RstAut}
				c \cdot f = \imath_W \circ f \circ \imath_W,
			\end{equation}
			which we fix throughout the paper.
			If $V$ is a projective complex variety,
			then $\Aut(V/\C)$ is a locally algebraic group over $\C$ and $\Aut(W/\R)$ is a real form of it~\cite[Theorem 3.7]{MO67}. See also \cite[Section 5.6]{FGIKNV}.
			The associated real structure on $\Aut(V)$ 
			is defined by~\eqref{eqn-RstAut}.
			
			\item Let $V_\R$ be a real scheme and let $V$ be its complexification.
			Let $\imath : V \to V$ be the associated real structure.
			For every $f \in \Aut(V/\C)$ such that 
			\begin{equation}\label{eqn-iconj}
				c \cdot f \cnec  \imath \circ f \circ \imath = f^{-1},
			\end{equation}
			the composition 
			$$\imath \circ f : V \to V$$ 
			defines a real structure on $V$.
			Condition~\eqref{eqn-iconj} is equivalent to the property that
			$$\phi : G_{\C/\R} \to \Aut(V)$$
			defined by $\phi(\id_{\C}) = \id_{\C}$ and $\phi(c) = f$ is a $1$-cocycle 
			where the $G_{\C/\R}$-action on $\Aut(V)$ is defined by~\eqref{eqn-RstAut}.
			We call $\imath \circ f$ the real structure twisted by $\phi$,
			and let $V_\phi$ denote
			the complex scheme $V$ endowed with the new
			$G_{\C/\R}$-action defined by $c \cdot v \cnec \imath(f(v))$ for all $v \in V$.
			We also let $V_{\R,\phi}$ denote the corresponding real form.
			
			\item 
			We continue the above example,
			and assume moreover that $V_{\R}$ is a real group scheme:
			then $V$ is a complex group scheme.
			We verify that 
			the group laws of $V_\phi$, viewed as morphisms over $\C$,
			are $G_{\C/\R}$-equivariant,
			so they descend to group laws on the real form $V_{\R,\phi}$,
			giving it a group scheme structure over $\R$.
			Finally, note that
			if $V_{\R}$ (or equivalently $V$) is an algebraic group,
			then so is $V_{\R,\phi}$.
			Moreover, 
			since for algebraic groups, 
			the property of being linear (resp. connected) 
			does not depend on the base field,
			if $V_{\R}$ is linear (resp. connected)
			then so is $V_{\R,\phi}$.

		\end{enumerate}
	\end{example}

	We can also describe the set of real forms up to equivalence
	using Galois cohomology~\cite[Page 124, Proposition 5]{Se02}.

	\begin{thm}\label{thm21}
		Let $V$ be a complex quasi-projective variety having a real form $W$ 
		with real structure $\imath_W$. 
		Then there are natural bijective correspondences between
		the following three sets:
		\begin{enumerate}
			\item The set of real forms of $V$ up to isomorphism as varieties over ${\mathbb R}$;
			\item The set of real structures 
			on $V$ up to equivalence;
			\item The Galois cohomology set 
			$$H^1(G_{\C/\R}, {\rm Aut}(V)),$$ 
			where the action of $G_{\C/\R}$ on ${\rm Aut}(V)$ is given by $f \mapsto \imath_W \circ f \circ \imath_W$. 
		\end{enumerate}
	\end{thm}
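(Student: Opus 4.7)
The plan is to establish the correspondence (1) $\leftrightarrow$ (2) via Galois descent, and then to identify (2) with (3) by an explicit twisting computation based at the fixed reference real structure $\imath_W$.

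For (1) $\leftrightarrow$ (2), I would rely on the map~\eqref{1-1R}: sending $W \mapsto \imath_W = \id_W \times c$ assigns a real structure to each real form, and two $\R$-isomorphic real forms $W, W'$ produce equivalent real structures, because an $\R$-isomorphism $W \eto W'$ base-changes to an element $h \in \Aut(V/\C)$ intertwining $\imath_W$ and $\imath_{W'}$. The inverse direction is the content of effective Galois descent for quasi-projective varieties: given a real structure $\imath$ on $V$, the pair $(V, \imath)$ descends to a real form since one may choose a $G_{\C/\R}$-linearized ample line bundle on $V$, and the associated real structure is $\imath$ by construction. This step is the only one that genuinely uses the quasi-projectivity of $V$.

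For (2) $\leftrightarrow$ (3), I would fix $\imath_W$ as base point and send any other real structure $\imath'$ to the element $f \cnec \imath_W \circ \imath' \in \Aut(V/\C)$; the composition of two anti-holomorphic maps acts on $\Spec\C$ by $c^2 = \id$ and so indeed lies in $\Aut(V/\C)$. Using $\imath_W^2 = \id_V$, one then has $\imath' = \imath_W \circ f$, and the involutivity $(\imath')^2 = \id_V$ rewrites as
\[
\imath_W \circ f \circ \imath_W \circ f = \id_V, \qquad \text{i.e.,} \qquad (c \cdot f) \cdot f = \id_V,
\]
with respect to the action~\eqref{eqn-RstAut}. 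This is precisely the $1$-cocycle condition for $\phi\colon c \mapsto f$, and Example~\ref{ex-RF}.(2) provides the inverse construction. To pass to equivalence classes, I would translate the relation $\imath_2 = h \circ \imath_1 \circ h^{-1}$ for $h \in \Aut(V/\C)$ into
\[
f_2 = \imath_W \circ h \circ \imath_W \circ f_1 \circ h^{-1} = (c \cdot h) \cdot f_1 \cdot h^{-1},
\]
which is exactly the coboundary relation between $1$-cocycles valued in the $G_{\C/\R}$-group $\Aut(V/\C)$. This identifies (2) and (3) as pointed sets, the distinguished class corresponding to $\imath_W$ itself.

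The entire argument is bookkeeping with the $G_{\C/\R}$-action~\eqref{eqn-RstAut}, and the only genuinely non-formal ingredient is the effectivity of Galois descent for quasi-projective schemes; I expect no conceptual obstacle beyond keeping the order of composition of anti-holomorphic maps straight and verifying the cocycle and coboundary identities consistently.
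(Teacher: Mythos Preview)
Your argument is correct and is precisely the standard proof of this classical fact. Note, however, that the paper does not supply its own proof of Theorem~\ref{thm21}: it simply records the statement and defers to \cite[Page 124, Proposition 5]{Se02}, together with the preceding discussion culminating in the bijection~\eqref{1-1R} and Example~\ref{ex-RF}. What you have written is essentially a sketch of the argument one finds in Serre, so there is nothing to compare beyond observing that you have unpacked the cited reference rather than invoked it.
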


	For later use,  we say that a subvariety $W$ on $V$ (resp. a morphism $f : V \to U$) is defined over ${\mathbb R}$ with respect to the real form $V_{\R}$ (resp. real forms $V_{\R}$ and $U_{\R}$) if there is an object $W_{\mathbb R}$ on $V_{\mathbb R}$ (resp. a morphism $f_{{\mathbb R}} : V_{{\mathbb R}} \to U_{{\mathbb R}}$) such that $W = W_{{\mathbb R}} \times_{{\rm Spec}\, {\mathbb R}}{\rm Spec}\, {\mathbb C}$ (resp. $f = f_{{\mathbb R}} \times \id_{{\rm Spec}\, {\mathbb C}}$ for some morphism $f_{\mathbb R} : V_{{\mathbb R}} \to U_{{\mathbb R}}$). 
	We say that a subvariety $W$ on $V$ is defined over ${\mathbb R}$ with respect to a real structure of $V$, if $W$ defined over ${\mathbb R}$ with respect to the corresponding real form. Similarly, we have the definition of a morphism $f : V \to U$ defined over ${\mathbb R}$ with respect to two real structures of $V$ and $U$. When a real structure $\imath$ of $V$ is fixed, by abuse of terminology, a complex point $x$ of $V$ is called a real point if $x \in V^{\imath}$, i.e., if the support of $x$ is fixed under $\imath$. Note that $V({\mathbb C})^{\imath} = V_{{\mathbb R}}({\mathbb R})$ as sets.

	\ssec{Some finiteness results of Galois cohomology}
	\hfill

	Recall that a group $H$ is said to be {\it polycyclic} if it is solvable and every subgroup of $H$ is finitely generated. 
	
	The following proposition is well-known. In our applications, the $G$-group $H$ in Proposition \ref{prop21} will be mostly a subgroup or a quotient group of ${\rm Aut}(V)$ of a complex projective variety $V$ having a real form $V_{0}$ with real structure $c_{0}$, to which the action of $c_0$ by conjugation restricts or extends.

	\begin{proposition}\label{prop21}
		Set $G := {\rm Gal}({\mathbb C}/{\mathbb R})$. 
		Let $H$ be a $G$-group. 
		\begin{enumerate}
			\item Suppose that the $G$-group $H$ is arithmetic,
			in the sense that there exists a linear $G$-group $L_{\Q}$ over $\Q$
			such that $H$ embeds $G$-equivariantly into $L_{\Q}$
			as an arithmetic subgroup.
			Then $H^1(G, H)$ is finite.
			
			\item If $H$ has a filtration consisting of normal $G$-subgroups 
			$N_i$ of $H$
			$$\{1_H\} = N_s \le N_{s-1} \le \ldots \le N_{1} \le N_0 = H$$
			such that $H^1(G,N_i/N_{i+1})$ is finite for any $G$-action on $N_i/N_{i+1}$ (this is the case when e.g.
			$N_i/N_{i+1}$ is either a finitely generated abelian group or a finite group),
			then $H^1(G, H)$ is a finite set. 
			
			\item 
			Let $H$ be a $G$-group which is virtually polycyclic,
			namely, $H$ admits a finite index polycyclic subgroup $N \le H$
			(without assuming that the $G$-action preserves $N$), 
			then $H^1(G, H)$ is a finite set.
			
			\item Assume that the $G$-action on $H$ is trivial. Then the cardinality of $H^1(G, H)$ coincides with the cardinality of the set of conjugacy classes of involutions with $1_H$ in $H$. 
		\end{enumerate}
	\end{proposition}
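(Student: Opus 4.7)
Parts (4) and (1) are easy starting points. For (4), since $G = \left\{\id_{\C}, c\right\}$ acts trivially on $H$, a 1-cocycle $\phi \colon G \to H$ is simply a group homomorphism, entirely determined by $\phi(c)$, which must satisfy $\phi(c)^2 = 1_H$. Two such cocycles are cohomologous precisely when the corresponding elements are $H$-conjugate. Hence $H^1(G, H)$ is in bijection with the set of conjugacy classes of elements of order dividing $2$, i.e., the conjugacy classes of involutions together with that of $1_H$. For (1), I would invoke the Borel--Serre finiteness theorem (see Serre, \emph{Cohomologie Galoisienne}, III.4.6), which gives finiteness of $H^1(G, \Gamma)$ whenever $\Gamma$ is an arithmetic subgroup of a linear algebraic group over $\Q$ carrying a compatible $G$-action; the $G$-equivariant embedding of $H$ into $L_{\Q}$ is precisely what is needed.

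For Part (2), I would argue by induction on the length $s$ of the filtration, using the long exact sequence in non-abelian Galois cohomology attached to
$$1 \to N_{i+1} \to N_i \to Q_i := N_i/N_{i+1} \to 1.$$
This yields an exact sequence of pointed sets $H^1(G, N_{i+1}) \to H^1(G, N_i) \to H^1(G, Q_i)$; more precisely, each nonempty fiber of the second map is the image of a twisted cohomology set $H^1(G, (N_{i+1})_\phi)$, where the $G$-action on $N_{i+1}$ has been modified by a 1-cocycle $\phi$ (see Serre, \S I.5.5). This is the key technical point where the hypothesis \emph{for any} $G$-action matters: twisting preserves the underlying filtration but modifies the action on each subquotient, and we need the finiteness of $H^1$ to be robust under such modifications. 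The parenthetical remark then follows since, for a finitely generated abelian group $A$ and any $G$-action, $H^1(\Z/2\Z, A)$ is annihilated by $2$ and a subquotient of $A$, hence finite; for a finite group the finiteness is immediate.

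Part (3) reduces to (2). Given a finite index polycyclic subgroup $N \le H$, its normal core $N_0 := \bigcap_{h \in H} h N h^{-1}$ is normal in $H$, polycyclic (as a subgroup of $N$), and of finite index in $H$. Replacing $N_0$ with $N_0 \cap (c \cdot N_0)$, which retains all of these properties, I may further assume that $N_0$ is $G$-invariant. The derived series of $N_0$ is characteristic, hence both $G$-invariant and normal in $H$, with finitely generated abelian successive quotients (every subgroup of a polycyclic group is finitely generated). Appending $N_0 \tle H$ on top produces a $G$-invariant normal filtration of $H$ whose successive quotients are either finitely generated abelian or the finite group $H/N_0$, so Part (2) applies. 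The main obstacle across the proof is thus the bookkeeping of the twisting procedure in (2), which is exactly what makes the ``for any $G$-action'' form of the hypothesis indispensable.
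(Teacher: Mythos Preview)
Your proof is correct and follows essentially the same approach as the paper's own argument: (1) by Borel--Serre, (4) directly from the definition of $H^1$, (2) by the twisted long exact sequence in non-abelian cohomology (the paper defers to \cite{CF19}), and (3) by passing to a normal $G$-stable polycyclic subgroup via the normal core and the intersection with its $G$-translate, then using the derived series as the required filtration. Your treatment is in fact slightly more explicit than the paper's, which merely cites references for (1) and (2).
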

	
	\begin{proof} (1) is proved by \cite[Th\'eor\`eme 6.1]{BS64}. 
		(2) is stated by \cite[D.1.7, Appendix D]{DIK00} and rigorously restated and proved by \cite[Lemma 4.9]{CF19}. 
		
		Now we prove (3). 
		Suppose $N$ is a polycyclic subgroup of $H$ of finite index. 
		Up to replacing $N$ by 
		$$\bigcap_{h\in H} h^{-1}Nh,$$ 
		which is still a finite index subgroup of $H$, 
		we can assume that $N$ is normal in $H$.
		Up to replacing $H$ by 
		$$\bigcap_{g\in G} g \cdot N,$$
		we can further assume that $N$ is a polycyclic $G$-subgroup.
		Since $N$ is solvable, the derived sequence $N^{(i)}$ of $N$ gives a sequence of normal $G$-subgroups of $H$
		$$\{1_H\}=N^{(m)} \le \cdots  \le N^{(1)} \le N^{(0)}=N  \le H,$$
		and the finite generation assumption (for all subgroups of $N$) implies that the quotient abelian groups $N^{(i)}/N^{(i+1)}$ are all finitely generated. Hence (3) follows from (2). 
		
		(4) is clear by the definition of the Galois cohomology set. 
		To our best knowledge, Lesieutre \cite[Lemma 13]{Le18} is the first who explicitly mentioned (4) 
		and effectively applied (4) for the existence of a smooth projective variety with infinitely many real forms. 
	\end{proof}

	\ssec{Proof of Theorem~\ref{thm12}}
	\hfill

	In the subsection, we prove Theorem~\ref{thm12} which is restated 
	as Theorem \ref{thm221} below. Let us start from some lemmas. 
	
	\begin{lemma}\label{lem217}
		Let $f: \R^n/\Z^n\to \R^n/\Z^n$ be a Lie group automorphism 
		of order $2$.
		Let $G \cnec \langle f \rangle \le \Aut(\R^n/\Z^n)$
		act naturally on $\R^n/\Z^n$.  
		Then $H^1(G, \R^n/\Z^n)$ is finite. 
		
	\end{lemma}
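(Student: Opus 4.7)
The plan is to exploit the short exact sequence of $G$-modules
\[
0 \to \Z^n \to \R^n \to \R^n/\Z^n \to 0,
\]
which is $G$-equivariant since $f$ is induced by a matrix $A \in \GL_n(\Z)$ with $A^2 = I$. The resulting long exact sequence in Galois cohomology collapses nicely: because $\R^n$ is uniquely divisible (a $\Q$-vector space) and $G$ is finite of order $2$, each $H^i(G, \R^n)$ for $i \geq 1$ is simultaneously a real vector space and annihilated by $|G| = 2$, hence vanishes. Therefore the connecting homomorphism gives an isomorphism
\[
H^1(G, \R^n/\Z^n) \;\simeq\; H^2(G, \Z^n).
\]

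Next I would compute the right-hand side via the standard periodicity for cyclic group cohomology. For $G = \langle f\rangle \simeq \Z/2\Z$ acting on $\Z^n$ through $A$, one has
\[
H^2(G, \Z^n) \;=\; (\Z^n)^{A}\big/(I+A)\Z^n,
\]
where $(\Z^n)^A = \ker(A - I) \cap \Z^n$.

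The key observation is that for any $x \in (\Z^n)^A$, one has $(I+A)x = 2x$, so $2 \cdot (\Z^n)^A \subseteq (I+A)\Z^n$. Thus the quotient $H^2(G,\Z^n)$ is annihilated by $2$. Since $(\Z^n)^A$ is a finitely generated abelian group, the quotient is a finitely generated $\F_2$-vector space, hence finite. Combining with the isomorphism above yields the finiteness of $H^1(G, \R^n/\Z^n)$.

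There is no substantial obstacle here; the only subtlety is to verify carefully that $H^i(G,\R^n) = 0$ for $i \geq 1$, which follows from either the divisibility-with-finite-group argument above or by noting that $\R^n$ is an induced $G$-module up to isomorphism after tensoring with $\Q$. Everything else is a direct computation with the cyclic cohomology formulas.
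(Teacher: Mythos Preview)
Your proof is correct and shares its starting point with the paper's first proof: both lift $f$ to $A\in\GL_n(\Z)$ and use the long exact sequence coming from $0\to\Z^n\to\R^n\to\R^n/\Z^n\to 0$. From there, however, the arguments diverge. The paper invokes Comessatti's lemma to decompose the $\Z[G]$-module $\Z^n$ into the three basic rank-one and rank-two blocks, and then computes $H^1(G,\R^n/\Z^n)$ explicitly in each case (obtaining $\Z/2\Z$, $0$, $0$). You instead kill $H^i(G,\R^n)$ for $i\ge 1$ in one stroke via unique divisibility, reduce to $H^2(G,\Z^n)=(\Z^n)^A/(I+A)\Z^n$, and observe that this quotient is $2$-torsion and finitely generated. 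Your route is shorter and avoids the case split; the paper's route has the minor advantage of yielding the precise cardinalities. The paper also offers a second, unrelated proof via the compact Lie group structure of the torus, comparing $\dim\Ker(f+\id)$ with $\dim\Ima(f-\id)$ directly; your argument does not touch this.
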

	
	Here we provide two different proofs of this lemma. 
	
	\begin{proof}[First proof of Lemma \ref{lem217}]
		Since the Lie group $\R^n$ is the universal covering of $\R^n/\Z^n$, it follows that $f$ can be lifted to a Lie group automorphism $g$ of $\R^n$. 
		Note that $g$ is a linear map. In fact, since $g$ preserves addition in $\R^n$ and $g(\Z^n)=\Z^n$, it follows that $g$ is $\Q$-linear on $\Q^n$.
		Since $g$ is a diffeomorphism (in particular, continuous), we have that $g$ is $\R$-linear on $\R^n$. The restriction $g|_{\Z^n}: \Z^n\to \Z^n$ is an automorphism of the free abelian group $\Z^n$ of order at most $2$. We may and will view $\R^n$ and $\Z^n$ as $G$-groups via $g$ and $g|_{\Z^n}$ respectively. Thus we have the following exact sequence of $G$-groups
		$$0\to \Z^n\to\R^n\to\R^n/\Z^n\to 0.$$

		As these are abelian groups, hence $G$-modules, we have the following long exact sequence of cohomology groups 
		$$H^1(G,\R^n)\to H^1(G,\R^n/\Z^n)\to H^2(G, \Z^n)\to H^2(G, \R^n).$$
		By Comessatti's Lemma (see \cite[Proposition 2]{Si82}), it suffices to prove the finiteness of $H^1(G,\R^n/\Z^n)$ in the following three cases: 
		\begin{enumerate}
		\item $n=1$, $g|_{\Z}={\rm id}_{\Z}$; 
		\item $n=1$, $g|_\Z=-{\rm id}_{\Z}$;
		
		\item $n=2$, $g|_{\Z^2} (a,b)=(a+b,-b)$ for any $(a,b)\in \Z^2$.
		\end{enumerate} 
		By \cite[Chapter VI, Proposition 7.1]{HS97} and the above long exact sequence, $H^1(G,\R^n/\Z^n)$ in the three cases is $\Z/2\Z$, $0$, $0$ respectively.
	\end{proof}
	
	\begin{proof}[Second proof of Lemma \ref{lem217}]
		Since $T = \R^n/\Z^n$ is a commutative $G$-group,
		we have group isomorphisms
		$$Z^1(G,\R^n/\Z^n) \eto \Ker(f + \id_T) \subset T,$$
		and
		$$B^1(G,\R^n/\Z^n) \eto \Ima(f - \id_T) \subset T,$$
		where both maps are defined by $\gs \mapsto \gs(f)$.
		Since $\Ker(f + \id_T)$ is a Lie subgroup of $T$ and $T$ is compact,
		it has only finitely many connected components.
		Thus to show that $H^1(G, \R^n/\Z^n)$ is finite,
		it suffices to show that 
		\begin{equation}\label{eqn-dimKI}
			\dim \Ker(f + \id_T) = \dim \Ima(f - \id_T).    
		\end{equation}
		Let $T_f : \R^n \to \R^n$ be the tangent map of $f$ at the origin.
		Since $T_f^2 = \id_T$, we have
		$$\R^n = \Ker(T_f + \id_T) \oplus \Ker(T_f - \id_T).$$
		Hence
		$$\dim \Ker(T_f + \id_T) = \dim \Ima(T_f - \id_T),$$
		which implies~\eqref{eqn-dimKI}.
	\end{proof}
	
	\begin{lemma}\label{lem218}
		Let $A_\R$ be a real abelian variety and let $A=A_\R\times_{\rm Spec\, \R} {\rm Spec}\,\C$.  Then $H^1(G_{\C/\R}, A)$ is finite.
	\end{lemma}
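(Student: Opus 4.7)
The plan is to reduce Lemma~\ref{lem218} directly to the preceding Lemma~\ref{lem217} by viewing $A(\C)$ as a compact real Lie group. Setting $g = \dim_\C A$, I would first invoke the classical uniformization $A \cong \C^g/\Lambda$ with $\Lambda \subset \C^g$ a lattice of rank $2g$, which yields an isomorphism of compact real Lie groups $A(\C) \cong (\R/\Z)^{2g}$.

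Next, I would check that the real structure $\iota \colon A \to A$ associated to $A_\R$ restricts to a real Lie group automorphism of $A(\C)$ of order $2$. Since $A_\R$ is an algebraic group over $\R$, its multiplication, inversion, and identity section descend to $\R$ and are therefore equivariant under $\iota$; in particular $\iota|_{A(\C)}$ is a group homomorphism fixing the origin. That it is smooth and of order $2$ is immediate from $\iota$ being a morphism of real algebraic varieties satisfying $\iota^2 = \id_A$. Under the identification $A(\C) \cong (\R/\Z)^{2g}$, the $G_{\C/\R}$-action on $A$ thus corresponds to the action generated by a Lie group automorphism of order $2$ of $(\R/\Z)^{2g}$, exactly as in the hypothesis of Lemma~\ref{lem217}.

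The conclusion then follows directly from Lemma~\ref{lem217}, which delivers the finiteness of $H^1(G_{\C/\R}, A(\C)) = H^1(G_{\C/\R}, A)$. There is no genuine obstacle in this chain of reductions; the only point requiring some care is the verification that $\iota$ respects the group law on $A$, and this is automatic from $A_\R$ being a group scheme over $\R$ rather than just a variety.
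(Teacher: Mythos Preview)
Your proposal is correct and follows essentially the same approach as the paper: both identify $A(\C)$ as a real Lie group with $\R^{2g}/\Z^{2g}$, observe that the real structure $\iota$ is a Lie group automorphism of order $2$ (since the group law is defined over $\R$), and then invoke Lemma~\ref{lem217}. Your exposition is slightly more detailed on why $\iota$ respects the group law, but the argument is the same.
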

	
	\begin{proof}

		Recall that $G_{\C/\R}$ acts on $A$ via the anti-holomorphic involution 
		$\imath \cnec {\rm id}_{A_\R}\times c$ of $A$. 
		Moreover, $\imath$ is a group homomorphism of $A$. Then as real Lie groups, we may identify $A$ with $\R^{2d}/\Z^{2d}$ 
		where $d = \dim A$, 
		and $\imath$ corresponds to a Lie group automorphism of $\R^{2d}/\Z^{2d}$ of order $2$. 
		By Lemma \ref{lem217}, $H^1(G_{\C/\R}, A)$ is finite.
	\end{proof}

	\begin{lemma}\label{lem219}
		Let $A_\R$ be a 
		connected algebraic group over $\R$ and let $A=A_\R\times_{\rm Spec\, \R} {\rm Spec}\,\C$.  Then $H^1(G_{\C/\R}, A)$ is finite.
	\end{lemma}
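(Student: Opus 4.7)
The plan is to reduce the lemma to the two cases already under control: connected linear algebraic groups over $\R$ (for which finiteness of Galois cohomology is classical, due to Borel--Serre) and real abelian varieties (handled by Lemma~\ref{lem218}). More precisely, since $A_\R$ is a connected algebraic group over the perfect field $\R$, the Chevalley structure theorem supplies a unique connected normal linear algebraic subgroup $L_\R \trianglelefteq A_\R$ such that $B_\R \cnec A_\R/L_\R$ is an abelian variety. Base-changing to $\C$ yields a $G_{\C/\R}$-equivariant short exact sequence of complex algebraic groups
$$1 \to L \to A \xrightarrow{\pi} B \to 1,$$
and hence a corresponding exact sequence of pointed Galois cohomology sets
$$H^1(G_{\C/\R}, L) \to H^1(G_{\C/\R}, A) \xrightarrow{\pi_*} H^1(G_{\C/\R}, B).$$

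First I would observe that the target $H^1(G_{\C/\R}, B)$ is finite by Lemma~\ref{lem218}, and that the source $H^1(G_{\C/\R}, L)$ is finite because $L_\R$ is a connected linear algebraic group over $\R$, so the Borel--Serre finiteness theorem for Galois cohomology of linear algebraic groups over the local field $\R$ applies.

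The main obstacle is that in non-abelian Galois cohomology the displayed sequence is only exact as pointed sets, so the finiteness of the two outer terms does not immediately yield finiteness of the middle. To overcome this, I would apply the standard twisting trick: for any $1$-cocycle $\phi : G_{\C/\R} \to A(\C)$, the fiber of $\pi_*$ over the class of $\pi \circ \phi$ is in bijection with a quotient of $H^1(G_{\C/\R}, {}_\phi L)$, where ${}_\phi L$ denotes $L$ endowed with the $G_{\C/\R}$-action twisted by inner conjugation by $\phi$. Since $L$ is normal in $A$, this twisted action is still algebraic, so ${}_\phi L$ is the complexification of a real form of $L_\R$ that is again connected and linear (in the same spirit as Example~\ref{ex-RF}(3)); hence $H^1(G_{\C/\R}, {}_\phi L)$ is once more finite by Borel--Serre. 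Since there are only finitely many classes in $H^1(G_{\C/\R}, B)$ indexing the fibers of $\pi_*$, and each such fiber is finite, we conclude that $H^1(G_{\C/\R}, A)$ itself is finite.
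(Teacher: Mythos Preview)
Your proposal is correct and follows essentially the same approach as the paper: both invoke the Barsotti--Chevalley structure theorem to obtain the extension of a connected linear group by an abelian variety, apply Lemma~\ref{lem218} for the abelian quotient, and then use the twisting trick (the paper cites \cite[Page 53, Corollary 3]{Se02}) together with the finiteness of $H^1$ for linear algebraic groups over $\R$ (the paper cites \cite[Page 144, Theorem 4]{Se02} rather than Borel--Serre directly) to control all fibers.
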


	\begin{proof}
		By Barsotti--Chevalley's structure theorem~\cite[Theorem 8.27, Notes 8.30]{Mi17}, 
		$A_\R$ (resp. $A$) has a unique normal connected linear algebraic subgroup $N_\R$ (resp. $N:=N_\R\times_{\rm Spec\, \R} {\rm Spec}\,\C$) such that the quotient $P_\R:=A_\R/N_\R$ (resp. $P:={P_\R}\times_{\rm Spec\, \R} {\rm Spec}\,\C$) is an abelian variety. 
		Then we have an exact sequence 
		$$H^1(G_{\C/\R}, N) \to H^1(G_{\C/\R}, A) \to H^1(G_{\C/\R}, P),$$
		as pointed sets, induced from the exact sequence of $G_{\C/\R}$-groups
		$$1 \to N \to A \to P \to 1.$$ 
		By Lemma \ref{lem218}, $H^1(G_{\C/\R}, P)$ is finite. 
		Thus, by \cite[Page 53, Corollary 3]{Se02}, 
		it suffices to show that $H^1(G_{\C/\R}, N_\phi)$ is finite for any $\phi \in Z^1(G_{\C/\R}, A)$
		(see Example~\ref{ex-RF} (2) for the definition of $N_\phi$). 
		As we mentioned in Example~\ref{ex-RF} (3),
		since $N_{\R}$ is a linear algebraic group over $\R$,
		so is the real form $N_{\R, \phi}$.
		It follows from~\cite[Page 144, Theorem 4; Page 143, Examples]{Se02} 
		that $H^1(G_{\C/\R}, N_\phi)$ is finite.
	\end{proof}
	
	For a locally compact field $k$ of characteristic $0$ and a so-called $k$-group $A$ of type (ALA),  Borel and Serre (\cite[Th\'eor\`eme 6.1]{BS64}) show that $H^1(k, A)$ is finite. For $k=\R$, the following result is in some sense a generalization of \cite[Th\'eor\`eme 6.1]{BS64}.

	\begin{theorem}\label{thm220}
		Let $A_\R$ be a locally algebraic group over $\R$ and let $A=A_\R\times_{\rm Spec\, \R} {\rm Spec}\,\C$. Let $A^0$ denote the identity component of $A$. If $H^1(G_{\C/\R}, A/A^0)$ is finite (resp. countable), then $H^1(G_{\C/\R}, A)$ is finite (resp. countable) as well. In particular, $H^1(G_{\C/\R}, A)$ is finite if 
		$A_\R$ is an algebraic group 
		over $\R$.
	\end{theorem}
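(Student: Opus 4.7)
The plan is to exploit the short exact sequence of $G_{\C/\R}$-groups
$$1 \to A^0 \to A \to A/A^0 \to 1,$$
mimicking at one higher level the strategy already used in the proof of Lemma~\ref{lem219}. A preliminary point to check is that the identity component $A^0$ is defined over $\R$: for a locally algebraic group over a field, the connected component of the identity is an open subgroup scheme of finite type, canonically characterized, hence stable under the Galois action coming from the real form $A_\R$. Thus $A^0 = A^0_\R \times_{\Spec \R} \Spec \C$ for a connected algebraic group $A^0_\R$ over $\R$, and the displayed sequence is Galois-equivariant.

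The induced exact sequence of pointed sets
$$H^1(G_{\C/\R}, A^0) \to H^1(G_{\C/\R}, A) \to H^1(G_{\C/\R}, A/A^0)$$
is the main tool. By Lemma~\ref{lem219}, $H^1(G_{\C/\R}, A^0)$ is finite. More importantly, for any cocycle $\phi \in Z^1(G_{\C/\R}, A)$, the twisted group $(A^0)_\phi$ is, by Example~\ref{ex-RF}(3), again a connected algebraic group over $\R$. Lemma~\ref{lem219} applied to each such twist then shows that $H^1(G_{\C/\R}, (A^0)_\phi)$ is finite. Invoking the standard torsor-twisting principle in non-abelian Galois cohomology (\cite[Page 53, Corollary 3]{Se02}), I conclude that every non-empty fiber of the map $H^1(G_{\C/\R}, A) \to H^1(G_{\C/\R}, A/A^0)$ is finite.

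Consequently, $H^1(G_{\C/\R}, A)$ is a disjoint union of finite sets indexed by a subset of $H^1(G_{\C/\R}, A/A^0)$. Therefore, if $H^1(G_{\C/\R}, A/A^0)$ is finite (resp.\ countable), then so is $H^1(G_{\C/\R}, A)$. For the ``in particular'' part, when $A_\R$ is algebraic the component group $A/A^0$ is a finite group, so that $H^1(G_{\C/\R}, A/A^0)$ is finite by Proposition~\ref{prop21}(2), and the full statement follows.

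The main non-routine ingredient, and what I expect to require the most care, is the descent of $A^0$ to $\R$ together with the verification that each twist $(A^0)_\phi$ is itself a connected algebraic real group; these are the descent-and-twisting statements that make the locally algebraic hypothesis compatible with the inductive use of Lemma~\ref{lem219}. Once these are in place, the remaining argument is a purely formal bookkeeping of fibers of a map of Galois cohomology pointed sets.
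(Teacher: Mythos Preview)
Your proposal is correct and follows essentially the same approach as the paper's proof: the same short exact sequence, the same descent of $A^0$ to $A_\R^0$, the same twisting argument via Example~\ref{ex-RF}(3) and Lemma~\ref{lem219}, and the same appeal to \cite[Page 53, Corollary 3]{Se02}. The only cosmetic difference is that for the ``in particular'' clause the paper simply notes that $H^1$ of a finite group is finite ``by definition'' rather than citing Proposition~\ref{prop21}(2).
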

	
	\begin{proof}

		We have an exact sequence 
		$$H^1(G_{\C/\R}, A^0) \to H^1(G_{\C/\R}, A) \to H^1(G_{\C/\R}, A/A^0),$$
		as pointed sets, induced from the exact sequence of $G_{\C/\R}$-groups
		$$1 \to A^0 \to A \to A/A^0 \to 1.$$
		Let $A_\R^0$ denote the identity component of $A_\R$. 
		We have $A^0=A^0_\R\times_{\rm Spec\, \R} {\rm Spec}\,\C$. 
		Since $A_\R^0$ is a connected algebraic group,
		so is the real form which underlies 
		$A^0_\phi$ for all $\phi \in Z^1(G_{\C/\R},A)$
		by Example~\ref{ex-RF}.
		Thus $H^1(G_{\C/\R}, A^0_\phi)$ is finite by Lemma~\ref{lem219}. 
		The first claim then follows from \cite[Page 53, Corollary 3]{Se02}.
		
		If $A_\R$ is an algebraic group, then $A/A^0$ is finite. Hence $H^1(G_{\C/\R}, A/A^0)$ is finite by definition, and the second claim follows from the first one.
	\end{proof}
	
	\begin{theorem}\label{thm221}
		Let $V$ be a complex projective variety with a real form. Then the number of mutually non-isomorphic real forms of $V$ is at most countable.
		If 
		\begin{equation}\label{H1-disc}
			H^1(G_{\C/\R}, {\rm Aut}(V)/{\rm Aut}^0(V))
		\end{equation}
		is finite, then $V$ has only finitely many real forms up to equivalence. 
	\end{theorem}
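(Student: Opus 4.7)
The plan is to use the bijective correspondence of Theorem~\ref{thm21} to identify the set of real forms of $V$ up to isomorphism with the Galois cohomology set $H^1(G_{\C/\R}, \Aut(V))$, and then reduce the computation of this set to that of the discrete component via Theorem~\ref{thm220}.

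More precisely, I would argue as follows. Fix a real form $W$ of $V$ so that $\Aut(V)$ carries the natural $G_{\C/\R}$-action from~\eqref{eqn-RstAut}, and by Theorem~\ref{thm21} the set of real forms of $V$ up to $\R$-isomorphism is in bijection with $H^1(G_{\C/\R}, \Aut(V))$. By Example~\ref{ex-RF}(1) (appealing to~\cite[Theorem 3.7]{MO67}), $\Aut(V/\C)$ is naturally a locally algebraic group over $\C$ whose real form is $\Aut(W/\R)$. Thus Theorem~\ref{thm220} applies with $A_\R = \Aut(W/\R)$ and $A = \Aut(V/\C)$, reducing both the finiteness and the countability of $H^1(G_{\C/\R}, \Aut(V))$ to the corresponding properties of $H^1(G_{\C/\R}, \Aut(V)/\Aut^0(V))$.

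Under the explicit hypothesis that~\eqref{H1-disc} is finite, the second claim is then immediate. For the general countability statement, it remains to verify that $H^1(G_{\C/\R}, \Aut(V)/\Aut^0(V))$ is always at most countable when $V$ is projective. For this I would observe that the underlying abstract group $\Aut(V)/\Aut^0(V)$ is itself countable: indeed, $\Aut(V)$ is an open subscheme of the Hilbert scheme of $V \times V$ parametrizing graphs of automorphisms, and the Hilbert scheme decomposes as a countable disjoint union, indexed by Hilbert polynomial, of quasi-projective components each with finitely many connected components. Since $G_{\C/\R}$ has order $2$, the set $Z^1(G_{\C/\R}, H)$ injects into $H$ for any $G_{\C/\R}$-group $H$, so $|H^1(G_{\C/\R}, \Aut(V)/\Aut^0(V))|$ is bounded by $|\Aut(V)/\Aut^0(V)|$, which is countable. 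Theorem~\ref{thm220} then gives the desired countability of $H^1(G_{\C/\R}, \Aut(V))$, concluding the first claim.

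I do not expect a genuine obstacle here: the statement is essentially a synthesis of Theorem~\ref{thm21}, Theorem~\ref{thm220}, and the fact that $\Aut(V)$ is a locally algebraic group whose component group is countable. The only point requiring a little care is the citation providing the real form structure on $\Aut(W/\R)$ (ensuring the hypothesis of Theorem~\ref{thm220} is met), and the verification that $\pi_0(\Aut(V/\C))$ is countable; both follow from standard representability results for the automorphism functor of a projective variety.
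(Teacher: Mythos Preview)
Your proposal is correct and follows essentially the same route as the paper: identify real forms with $H^1(G_{\C/\R},\Aut(V))$ via Theorem~\ref{thm21}, invoke Example~\ref{ex-RF}(1) to view $\Aut(V)$ as a locally algebraic group with a real form, and then apply Theorem~\ref{thm220} to reduce both the finiteness and countability statements to the corresponding properties of $H^1(G_{\C/\R},\Aut(V)/\Aut^0(V))$. Your justification of the countability of $\Aut(V)/\Aut^0(V)$ via the Hilbert scheme is a bit more explicit than the paper's, which simply asserts it.
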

	
	\begin{proof} The first statement follows from Theorem~\ref{thm220}, as the group ${\rm Aut}(V)/{\rm Aut}^0(V)$, hence, the set $H^1(G_{\C/\R}, {\rm Aut}(V)/{\rm Aut}^0(V))$, is countable.
		According to Example~\ref{ex-RF},
		$\Aut(V)$ is a locally algebraic group admitting a real form,
		so~\eqref{H1-disc} makes sense, 
		and we can apply Theorem~\ref{thm220} with $A = \Aut(V)$. 
		The finiteness of~\eqref{H1-disc} then implies that
		$H^1(G_{\C/\R}, {\rm Aut(V)})$ is finite, 
		thus $V$ has only finitely many real forms by Theorem~\ref{thm21}.  
	\end{proof}
	
	\begin{remark}\label{rem220}
		Theorem~\ref{thm221} was asserted in~\cite[Corollary D.1.10]{DIK00} but only proven
		when ${\rm Aut}^0(V)$ is a linear algebraic group.
		As we believe that Theorem \ref{thm221} is fundamental, we gave a complete proof here.
	\end{remark}

	\begin{proof}[Proof of Corollary~\ref{cor-ent0}]
		
		It is clear that
		if $\Aut(V)/\Aut^0(V)$ is virtually solvable,
		then so is $\Ima(\rho)$.
		By Fujiki-Lieberman's theorem~\cite[Theorem 2.10]{Br18},
		$\Ker(\rho)$ is finite.
		As $\Ima(\rho)$ embeds into $\GL(\NS(V)/\torsion)$, 
		$\Ima(\rho)$ is virtually polycyclic
		by Malcev's theorem~\cite[Page 26, Corollary 1]{Se83}.
		It follows from Proposition~\ref{prop21} (3), then (2), 
		that $H^1(G_{\C/\R},\Aut(V)/\Aut^0(V))$ is finite.
		Thus Corollary~\ref{cor-ent0} follows from Theorem~\ref{thm12}.
	\end{proof}
	
	\begin{proof}[Proof of Corollary~\ref{cor-ent0bis}]
	
	By Corollary~\ref{cor-ent0} and Tits' alternative~\cite[Theorem 1]{T72},
	the image of
	$$\rho : \Aut(V)/\Aut^0(V) \to \GL(\NS(V)/\torsion)$$
	contains a non-abelian free group.
	This implies the first statement.
	The second statement follows from Corollary~\ref{cor-ent0}
		together with \cite[Proposition 2.6 (1)]{DLOZ22}.
	\end{proof}

	\ssec{Cone conjecture and real structures}
	\hfill
	
	Now we prove Theorem~\ref{thm13}
	mentioned in the introduction 
	by clarifying some arguments of~\cite{CF19}.
	First we prove the following finiteness result, 
	which is claimed in~\cite[Lemma 2.5]{Be17} without proof. 
	We prove it here for the sake of completeness 
	(see also \cite[Section 9]{CF19}).
	\begin{lemma}\label{lem-benzerga}
		Let $\Gamma$ be a $\Z/2\Z$-group. If the semidirect product $\Gamma\rtimes\Z/2\Z$ induced by the $\Z/2\Z$-action on $\Gamma$ contains only finitely many conjugacy classes of elements of order 2, then $H^1(\Z/2\Z,\Gamma)$ is finite.
	\end{lemma}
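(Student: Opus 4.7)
The plan is to set up a dictionary between the pointed set $H^{1}(\Z/2\Z,\Gamma)$ and a subset of conjugacy classes of involutions in $\Gamma\rtimes\Z/2\Z$, and then exploit the fact that $\Gamma$ sits as an index-$2$ subgroup to deduce finiteness directly from the hypothesis. This is essentially an extension to the non-trivial action case of the dictionary recalled in Proposition~\ref{prop21}(4).

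Denote by $\sigma$ the generator of $\Z/2\Z$ and write elements of $\Gamma\rtimes\Z/2\Z$ as pairs $(\gamma,\epsilon)$ with $\gamma\in\Gamma$ and $\epsilon\in\{1,\sigma\}$. A direct computation yields $(\gamma,\sigma)^{2}=(\gamma\cdot{}^{\sigma}\!\gamma,\,1)$, so the elements of $\Gamma\rtimes\Z/2\Z$ lying in the non-trivial coset $\Gamma\cdot\sigma$ whose square is the identity correspond bijectively, via $(\gamma,\sigma)\leftrightarrow\gamma$, to $1$-cocycles $\gamma\in Z^{1}(\Z/2\Z,\Gamma)$ (these are the elements of $\Gamma$ satisfying $\gamma\cdot{}^{\sigma}\!\gamma=1$). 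Since $\sigma\neq 1$, every such element of $\Gamma\cdot\sigma$ has order exactly $2$.

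Next I would match cohomology with conjugation. Conjugating $(\gamma,\sigma)$ by $(h,1)\in\Gamma$ produces $(h\gamma\,{}^{\sigma}\!h^{-1},\sigma)$, which is precisely the coboundary relation applied to $\gamma$. Hence two cocycles are cohomologous if and only if the corresponding order-$2$ elements of $\Gamma\cdot\sigma$ are conjugate by an element of $\Gamma$ inside $\Gamma\rtimes\Z/2\Z$. This establishes a bijection between $H^{1}(\Z/2\Z,\Gamma)$ and the set of $\Gamma$-conjugacy classes of order-$2$ elements of $\Gamma\rtimes\Z/2\Z$ that lie in the coset $\Gamma\cdot\sigma$.

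Finally, since $\Gamma$ is normal of index $2$ in $\Gamma\rtimes\Z/2\Z$, any $(\Gamma\rtimes\Z/2\Z)$-conjugacy class $C$ decomposes as a union of at most two $\Gamma$-conjugacy classes: for $x\in C$, one has $C=O_{x}\cup O_{\sigma x\sigma^{-1}}$, where $O_{y}$ denotes the $\Gamma$-conjugacy class of $y$. Combined with the hypothesis that $\Gamma\rtimes\Z/2\Z$ admits only finitely many conjugacy classes of order-$2$ elements, this forces finitely many $\Gamma$-conjugacy classes of order-$2$ elements overall, hence finitely many lying in $\Gamma\cdot\sigma$, so $H^{1}(\Z/2\Z,\Gamma)$ is finite. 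The argument is essentially a bookkeeping exercise and presents no substantive obstacle; the only delicate point is to restrict attention to the coset $\Gamma\cdot\sigma$ rather than to the full set of involutions, since involutions lying in $\Gamma$ itself do not correspond to cocycles and would otherwise pollute the count.
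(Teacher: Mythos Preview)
Your proof is correct, but it proceeds by a more direct and elementary route than the paper's. The paper embeds $\Gamma$ in $\Gamma\rtimes\Z/2\Z$ as a normal $\Z/2\Z$-subgroup (with the $\Z/2\Z$-action on the semidirect product given by conjugation by $(1_\Gamma,\sigma)$), obtains the exact sequence of pointed sets
\[
\{\pm 1\}\to H^1(\Z/2\Z,\Gamma)\to H^1(\Z/2\Z,\Gamma\rtimes\Z/2\Z),
\]
and then invokes Serre's twisting formalism \cite[p.~53, Corollary~3]{Se02} to reduce to the finiteness of $H^1(\Z/2\Z,\Gamma\rtimes\Z/2\Z)$; the latter is identified, via the inner-automorphism trick $H^1(\Z/2\Z,\Gamma\rtimes\Z/2\Z)\simeq H^1(\Z/2\Z,(\Gamma\rtimes\Z/2\Z)_{\mathrm{triv}})$, with conjugacy classes of elements of order $\le 2$. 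You instead set up an explicit bijection between $H^1(\Z/2\Z,\Gamma)$ and the $\Gamma$-conjugacy classes of involutions in the nontrivial coset, and then use only the elementary fact that an index-$2$ subgroup splits each ambient conjugacy class into at most two pieces. Your argument avoids the appeal to Serre's exact-sequence machinery and makes the correspondence with Proposition~\ref{prop21}(4) completely transparent; the paper's argument, on the other hand, generalises more readily (the same exact-sequence template is reused several times elsewhere in Section~\ref{sct2}).
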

	
	\begin{proof}
		Here we identify the elements of $\Z/2\Z$ with $\{\overline{0},\overline{1}\}$.
		Note that conjugation by $(1_{\Gamma},\overline{1})$ makes $\Gamma\rtimes\Z/2\Z$ into a $\Z/2\Z$-group, in a way that we have the following exact sequence of $\Z/2\Z$-groups:
		\[ 1 \to \Gamma \to \Gamma \rtimes \Z/2\Z \to \Z/2\Z \to 1,  
		\]
		where the induced action on $\Z/2\Z$ is trivial.
		This induces an exact sequence of pointed sets 
		\[ \{\pm 1\} \to H^1(\Z/2\Z, \Gamma) \to H^1(\Z/2\Z, \Gamma \rtimes \Z/2\Z). 
		\] 
		By~\cite[Page 53, Corollary 3]{Se02}, it suffices to show that $H^1(\Z/2\Z, \Gamma \rtimes \Z/2\Z)$ is finite.
		
		Since $\Z/2\Z$ acts on $\Gamma \rtimes \Z/2\Z$
		by conjugation, we have
		$$H^1(\Z/2\Z, \Gamma\rtimes\Z/2\Z) \simeq
		H^1(\Z/2\Z, (\Gamma\rtimes\Z/2\Z)_{\mathrm{triv}})$$
		where $(\Gamma\rtimes\Z/2\Z)_{\mathrm{triv}}$
		is the $\Z/2\Z$-group $\Gamma\rtimes\Z/2\Z$ with the trivial $\Z/2\Z$-action. The group cohomology $H^1(\Z/2\Z, (\Gamma\rtimes\Z/2\Z)_{\mathrm{triv}})$ is in bijection with the set of elements of order 1 or 2 in $\Gamma\rtimes\Z/2\Z$ modulo conjugation, which is finite by assumption.
	\end{proof}

	Let $V$ be a smooth complex projective variety. 
	The Klein automorphism group 
	$\KAut(V)$
	of $V$, is defined as the group of 
	holomorphic and anti-holomorphic automorphisms of a scheme $V \to {\rm Spec}\, {\mathbb C}$ over ${\rm Spec}\, {\mathbb R}$ to itself. 
	If $V$ admits a real structure $\imath$, then $$\KAut(V)\simeq\Aut(V/{\mathbb C}) \rtimes \langle\imath\rangle.$$
 Since $\imath$ is an automorphism of a scheme $V$, we have 
$$\imath^* :  {\mathcal O}_V(U) \simeq {\mathcal O}_V(\imath^{-1}(U))$$
for any Zariski open subset $U \subset V$. Then for $f \in {\mathcal O}_V(U)$ and for any $x \in \imath^{-1}(U)({\mathbb C})$, we have 
$$(\imath^*f)(x) = c(f(\imath(x))) = \overline{f(\imath(x))},$$
as by definition, the value $(\imath^*f)(x) \in {\mathbb C} = {\mathcal O}_{V, x}/{\mathfrak m}_{V, x}$ is uniquely determined by the condition 
$$\imath^*f - (\imath^*f)(x) \in {\mathfrak m}_{V, x}.$$
(See for instance \cite[Section 4.2]{MO15}.) This naturally extends for the pull-back of rational functions of $V$. Let $D$ be a Cartier divisor on $V$ with local equations $(f_U, U)$. We define the Cartier divisor $\overline{D}$ on $V$ by the local equations $(\imath^*f_U, \imath^{-1}(U))$.
	Then the contravariant $\Aut(V)$-action on $\Pic(V)$ extends to
	a contravariant $\KAut(V)$-action by $\imath^*({\mathcal O}_V(D)) = {\mathcal O}_V(\overline{D})$.
	It induces a contravariant $\KAut(V)$-action on $\NS(V)$, which preserves the ample cone. Note that, by the definition of $H^0(V, {\mathcal O}_V(D))$ and $H^0(V, {\mathcal O}_V(\overline{D}))$ (as vector subspaces of the rational function field of $V$), the linear system $|{\mathcal O}_V(D)|$ is free (resp. very ample) if and only if so is $|{\mathcal O}_V(\overline{D})|$.

	Let $\Aut^*(V)$ and $\KAut^*(V)$
	denote respectively the images of $\Aut(V)$ and $\KAut(V)$
	in $\GL(\NS(V)/\torsion)$.
	We have  
	$${\rm KAut}^{*}(V) 
	= \langle {\rm Aut}^{*}(V), \imath^* \rangle. 
	$$

	\begin{proposition}\label{pro-Ksgfini}
		Let $V$ be a complex projective variety and let $\Gamma$ be a subgroup of ${\rm GL}(\NS(V)/\torsion)$
		such that $\Gamma$ contains $\Gamma\cap\Aut^*(V)$ as a finite index subgroup
		and preserves ${\rm Amp} (V)$ 
		(e.g. $\Gamma = {\rm Aut}^*(V)$ or ${\rm KAut}^*(V)$).
		Suppose that the rational hull ${\rm Nef}^{+} (V)$ 
		of the nef cone $\Nef(V)$ 
		contains a rational polyhedral cone $\Sigma$ satisfying
		$$\left( \Gamma\cap\Aut^*(V) \right) \cdot \Sigma \supset \Amp(V).$$
		Then $\Gamma$ has only finitely many 
		finite subgroups, up to conjugation under $\Gamma\cap{\rm Aut}^*(V)$. 
	\end{proposition}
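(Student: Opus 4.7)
The plan is, given a finite subgroup $F \le \Gamma$, to conjugate $F$ by an element of $\Gamma_0 := \Gamma \cap \Aut^*(V)$ to a subgroup that fixes a point of $\Sigma$, and then to use the rational polyhedral structure of $\Sigma$ to bound the number of $\Gamma_0$-conjugacy classes of such subgroups. First, since $F$ is finite and $\Gamma$ preserves the convex open cone $\Amp(V)$, the $F$-action on $\Amp(V)$ admits a fixed point by averaging: for any $v_0 \in \Amp(V)$, the class $v := |F|^{-1}\sum_{\phi\in F}\phi(v_0)$ lies in $\Amp(V)$ and is $F$-fixed. The covering hypothesis $\Gamma_0 \cdot \Sigma \supset \Amp(V)$ then furnishes $g \in \Gamma_0$ with $w := g^{-1}(v) \in \Sigma$, and replacing $F$ by the $\Gamma_0$-conjugate $F' := g^{-1}Fg$ yields a finite subgroup of $\Gamma$ fixing $w \in \Sigma \cap \Amp(V)$.

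It then remains to show that only finitely many $\Gamma_0$-conjugacy classes of finite subgroups of $\Gamma$ fix some point of $\Sigma$. My plan for this step is to stratify by the finitely many faces of $\Sigma$: the point $w$ lies in the relative interior of a unique face $\sigma$, and the fixed subspace $L := (\NS(V) \otimes_{\Z} \R)^{F'}$ is a rational linear subspace of $\NS(V) \otimes_{\Z} \R$ intersecting $\sigma^\circ$. Consequently $F'$ acts as the identity on the full-rank sublattice $L \cap (\NS(V)/\torsion)$ of $L$ and preserves an invariant complement, which, together with Minkowski's classical finiteness of conjugacy classes of finite subgroups of $\GL_n(\Z)$, leaves only finitely many possibilities for $F'$ up to $\GL(\NS(V)/\torsion)$-conjugation once $\sigma$ is fixed.

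The main obstacle will be refining this $\GL(\NS(V)/\torsion)$-conjugacy statement to $\Gamma_0$-conjugacy. This requires using the cone hypothesis more than merely as a covering statement: the rational polyhedral character of $\Sigma$ combined with $\Gamma_0 \cdot \Sigma \supset \Amp(V)$ should yield a proper-discontinuity-type property for the $\Gamma_0$-action on $\Amp(V)$, namely that only finitely many $\Gamma_0$-translates of $\Sigma$ meet a given compact subset of $\Amp(V)$. Combined with the finite face stratification, this would ensure that for each $\GL(\NS(V)/\torsion)$-conjugacy class only finitely many $\Gamma_0$-conjugacy classes arise among subgroups fixing points of $\Sigma$. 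Finally, since $\Gamma_0$ has finite index in $\Gamma$, the finiteness of $\Gamma_0$-conjugacy classes of finite subgroups of $\Gamma_0$ transfers to the same conclusion for $\Gamma$ after enlarging the finite list by a controlled factor.
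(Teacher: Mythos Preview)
Your opening moves are correct and coincide with the paper's: average an ample class over $F$ to obtain an $F$-fixed ample point, then translate by some $g \in \Gamma_0$ so that the conjugate $F' = g^{-1}Fg$ fixes a point $w \in \Sigma \cap \Amp(V)$, and note that $w$ lies in the relative interior of a unique face $\sigma$ of $\Sigma$.

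However, your completion has a genuine gap. The detour through Minkowski's theorem is a red herring: knowing that there are finitely many $\GL(\NS(V)/\torsion)$-conjugacy classes of finite subgroups does not, by itself, help you pass to $\Gamma_0$-conjugacy, and your proposed ``proper-discontinuity'' statement (finitely many $\Gamma_0$-translates of $\Sigma$ meet a compact set in $\Amp(V)$) does not bridge that gap either. Two subgroups $F_1', F_2'$ can both fix points of $\Sigma \cap \Amp(V)$ and be $\GL_n(\Z)$-conjugate without any visible mechanism forcing them to be $\Gamma_0$-conjugate; nothing in your sketch explains why the intervening conjugator could be taken in $\Gamma_0$.

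The paper bypasses this entirely. The correct sharpening of your proper-discontinuity intuition is Looijenga's \emph{Siegel property} \cite[Theorem 3.8]{Lo14}: once one checks (via Fujiki--Lieberman and the finite-index hypothesis) that the pointwise stabilizer in $\Gamma$ of any set meeting $\Amp(V) \cap (\NS(V)/\torsion)$ is finite, the Siegel property yields that for each face $\Pi$ of $\Sigma$ the set $\{g \in \Gamma : g(\Pi^\circ) \cap \Pi^\circ \cap \Amp(V) \neq \varnothing\}$ is finite. Taking the union over the finitely many faces gives a single finite subset $\mathcal{S} \subset \Gamma$. Now each $\phi \in F'$ fixes $w \in \sigma^\circ \cap \Amp(V)$, so $\phi(\sigma^\circ) \cap \sigma^\circ \cap \Amp(V) \ni w$, forcing $\phi \in \mathcal{S}$. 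Thus $F' \subset \mathcal{S}$ as a \emph{set}, and there are only finitely many subgroups of a finite set. This is strictly stronger than what you were aiming for (finitely many subgroups, not merely finitely many conjugacy classes), and Minkowski is never invoked.
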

	
	\begin{proof}

		Since $[\Gamma : \Gamma\cap \Aut^*(V)] < \infty$, by
		Fujiki-Lieberman's theorem 
		(see e.g.~\cite[Theorem 2.10]{Br18})
		for each $v \in \Amp(V) \cap 
		(\NS(V)/\torsion)$, the stabilizer group 
		of $v$ 
		$$\{g \in \Gamma\,|\, g(v) = v\}$$
		is a finite group. In particular, for any subset $F \subset {\rm NS} (V) \otimes_{{\mathbb Z}} {\mathbb R}$ such that 
		$$F \cap {\rm Amp} (V) \cap \left({\rm NS} (V)/{\rm torsion}\right) 
		\not= \varnothing,$$ 
		the pointwisely stabilizer group of $F$
		$$Z_{\Gamma}(F) := \{g \in \Gamma \,|\, g(v) = v, \,\, \forall\, v \in F\}$$
		is a finite group as well. 
		
		Thus, by the Siegel property \cite[Theorem 3.8]{Lo14}, for any two polyhedral cones $\Pi_1$ and $\Pi_2$ in ${\rm Nef}^{+} (V)$, which are not necessarily of maximal dimension nor of the same dimension, the set 
		$$\{ g \in \Gamma \,|\, g(\Pi_1^{\circ}) \cap \Pi_2^{\circ} \cap {\rm Amp} (V) \not= \varnothing \}$$
		is a finite set as $Z_{\Gamma}(F_i)$ in \cite[Theorem 3.8]{Lo14} is a finite group as mentioned above. 
		Here and hereafter, $\Pi^{\circ}$ is the relative interior of $\Pi$.
		
		Let $\Delta$ be the set of all faces of $\Sigma$. Here $\Sigma$ itself is also considered as a face as did in \cite[Section 1]{Lo14}. Since $\Sigma$ is a rational polyhedral cone, $\Delta$ is a finite set. Hence
		$${\mathcal S} := \{ g \in \Gamma \,|\, g(\Pi_i^{\circ}) \cap \Pi_i^{\circ} \cap {\rm Amp} (V) \not= \varnothing\,\, \mbox{for some} \, \Pi_i \in \Delta \}$$
		is also a finite set. 
		
		Let $H \subset \Gamma$ be a finite subgroup. Choose $v \in {\rm Amp} (V) \cap (\NS(V)/\torsion)$. Then 
		$$v_H := \sum_{g \in H}g(v) \in {\rm Amp} (V) \cap (\NS(V)/\torsion)$$
		as $\Gamma$ preserves $\Amp(V)$ and $\NS(V)/\torsion$. 
		Since 
		$\left( \Gamma\cap\Aut^*(V) \right) \cdot \Sigma \supset \Amp(V)$, 
		there is then an element $a \in \Gamma\cap{\rm Aut}^*(V)$ such that 
		$$u_H := a(v_H) \in \Sigma \cap {\rm Amp} (V) 
		\cap (\NS(V)/\torsion).$$
		As $g(v_H) = v_H$ whenever $g \in H$, it follows that 
		$$a \circ g \circ a^{-1} (u_H) = a \circ g (v_H) = a(v_H) = u_H$$
		for all $g \in H$. Hence, considering the (unique) face $\Pi$ of $\Sigma$ such that $u_H \in \Pi^{\circ}$, we deduce that
		$$a \circ H \circ a^{-1} \subset {\mathcal S}.$$
		Since ${\mathcal S}$ is a finite set, 
		it contains only finitely many finite subgroups of $\Gamma$. 
		Thus finite subgroups of $\Gamma$ are at most finite up to conjugation under $\Gamma\cap{\rm Aut}^*(V)$.
	\end{proof}

	\begin{proof}[Proof of Theorem~\ref{thm13}]
		
		We may and will assume that $V$ has a real structure $\imath$. 
		By Theorem~\ref{thm221}, it suffices to show that 
		$H^1(G_{\C/\R}, {\rm Aut}(V)/{\rm Aut}^0 (V))$ is finite. 
		Recall that we have an exact sequence of $G_{\C/\R}$-groups  
		$$1 \to N \to {\rm Aut}(V)/{\rm Aut}^0 (V) \to {\rm Aut}^*(V) \to 1$$
		for some finite $G_{\C/\R}$-group $N$ by Fujiki-Lieberman's theorem. 
		It follows that
		$H^1(G_{\C/\R}, N_\phi)$
		is finite for all 
		$\phi \in  Z^1(G_{\C/\R},\Aut(V)/\Aut^0(V))$. By~\cite[Page 53, Corollary 3]{Se02}, 
		it suffices to show that $H^1(G_{\C/\R},\Aut^*(V)) = H^1(\langle \imath^* \rangle,\Aut^*(V))$ 
		is finite.

		First we assume that
		$\imath^* \in \Aut^*(V)$.
		Then $\KAut^*(V) = \Aut^*(V)$.
		Since the $\imath^*$-action on $\Aut^*(V)$ 
		is the conjugation by $\imath^*$,
		the set
		$H^1(\langle \imath^* \rangle,\Aut^*(V))$ is in bijection with
		the set of conjugacy classes of involutions of
		$\Aut^*(V) = \KAut^*(V)$,
		which is finite by Proposition~\ref{pro-Ksgfini}.
		Now assume that $\imath^* \not\in \Aut^*(V)$,
		then $\Aut^*(V) \rtimes \langle \imath^* \rangle = \KAut^*(V)$,
		and it follows from again Proposition~\ref{pro-Ksgfini}, together with Lemma~\ref{lem-benzerga}, that
		$H^1(\langle \imath^* \rangle,\Aut^*(V))$ is finite.
	\end{proof}
	
	\begin{remark}\label{rem22} 

		The argument \cite[Section 9, Proof of Theorem 1.1]{CF19} is correct modulo the proof of \cite[Proposition 7.4]{CF19}, which is crucial. For instance, in the proof of \cite[Proposition 7.4]{CF19}, it is unclear in general if $\{g^*(\Sigma)\}_{g^* \in {\rm Aut}(V)^{*}}$ form a fan or not. Therefore, it is in general unclear if $g^*(\Sigma) \cap \Sigma$ is a face of both $\Sigma$ and $g^*(\Sigma)$ or not, either. Even if this would be the case, it is yet unclear if the one-dimensional ray $R$ of both $\Sigma$ and $g^*(\Sigma)$ in the proof of \cite[Proposition 7.4]{CF19} is {\it inside} ${\rm Amp} (V)$ or not. Indeed, if $R$ is on the boundary of ${\rm Amp} (V)$, then the set of $g^* \in {\rm Aut}(V)^{*}$ such that 
		$$R \subset \Sigma \cap g^*(\Sigma)$$
		could be an infinite set. For instance, this is the case where $g$ is an element of the Mordell-Weil group of an elliptic K3 surface $V \to {\mathbb P}^1$ of infinite order. For this reason and the importance of Theorem~\ref{thm13}, we gave a complete proof under a slightly more general setting,
		while respecting their original arguments as much as we can. 
	\end{remark}

	\section{Proof of Theorem~\ref{thmRFsurf}}\label{sct3}

	We will prove Theorem~\ref{thmRFsurf} at the end of this section. 
	Let us begin with the following corollary of Theorem~\ref{thm12},
	originally proven by Silhol~\cite[Proposition 7]{Si82}.
	
	\begin{corollary}\label{cor-ab}
		Let $A$ be an abelian variety. Then $A$, as a complex variety, has at most finitely many non-isomorphic real forms. 
	\end{corollary}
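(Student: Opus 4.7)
The plan is to apply Theorem~\ref{thm12} to the abelian variety $A$. If $A$ admits no real form, the conclusion is vacuous, so we may fix a real structure $\imath$ on $A$, and it then suffices to prove that
\[
H^1(G_{\C/\R}, \Aut(A)/\Aut^0(A))
\]
is finite.

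For an abelian variety, $\Aut^0(A)$ is the translation subgroup, canonically isomorphic to $A$ itself, and the canonical origin of $A$ splits the natural extension
\[
1 \to A \to \Aut(A) \to \Aut_{\mathrm{grp}}(A) \to 1,
\]
so $\Aut(A)/\Aut^0(A)$ is identified with $\Aut_{\mathrm{grp}}(A)$, the group of algebraic group automorphisms of $A$. The first step is to verify that this identification is $G_{\C/\R}$-equivariant: although $\imath$ need not fix the canonical origin of $A$, conjugation by $\imath$ does preserve the translation subgroup $\Aut^0(A)$, so the $G_{\C/\R}$-action descends to $\Aut_{\mathrm{grp}}(A)$ and corresponds to the action induced by $\imath$ on the rational Hodge structure.

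Next, I would exhibit $\Aut_{\mathrm{grp}}(A)$ as an arithmetic $G_{\C/\R}$-subgroup of a linear algebraic $\Q$-group. Its faithful action on the Betti lattice $H_1(A,\Z)$ realizes $\Aut_{\mathrm{grp}}(A)$ as an arithmetic subgroup of the $\Q$-algebraic group $M$ of automorphisms of the rational Hodge structure $H_1(A,\Q)$, and the natural action of $\imath$ on $H_1(A,\Z)$ extends the $G_{\C/\R}$-action to an algebraic $\Q$-involution of $M$. Proposition~\ref{prop21}(1), i.e.\ the Borel--Serre finiteness theorem, then yields that $H^1(G_{\C/\R}, \Aut_{\mathrm{grp}}(A))$ is finite, and Theorem~\ref{thm12} concludes.

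The main point requiring care is the bookkeeping of $G_{\C/\R}$-equivariance, in particular checking that the descended Galois action on $\Aut_{\mathrm{grp}}(A)$ genuinely arises from an algebraic $G_{\C/\R}$-structure on the ambient $\Q$-group $M$, so that Proposition~\ref{prop21}(1) is applicable as stated. An alternative route, which avoids invoking Borel--Serre, would be to decompose $A$ up to isogeny into simple isotypic components and filter $\Aut_{\mathrm{grp}}(A)$ by normal $G_{\C/\R}$-subgroups whose successive quotients are finitely generated abelian, then invoke Proposition~\ref{prop21}(2); this also seems workable but more hands-on.
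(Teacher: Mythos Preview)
Your proposal is correct and follows essentially the same approach as the paper: reduce to finiteness of $H^1(G_{\C/\R},\Aut(A)/\Aut^0(A))$ via Theorem~\ref{thm12}, identify this quotient with $\Aut_{\mathrm{grp}}(A)$, observe it is an arithmetic $G_{\C/\R}$-group, and conclude by Proposition~\ref{prop21}(1). The paper is more terse, simply citing \cite[Exemples~3.5]{BS64} for the arithmeticity rather than spelling out the embedding into $\GL(H_1(A,\Q))$ and the equivariance check as you do.
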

	
	\begin{proof} 
		The proof of~\cite[Proposition 7]{Si82} is more precise,
		in that it enumerates the number of real forms.
		Here we only show the finiteness. 
		Since the $G_{\C/\R}$-group
		$${\rm Aut}(A)/{\rm Aut}^0 (A)$$
		is arithmetic~\cite[Exemples 3.5]{BS64},
		$$H^1(G_{\C/\R}, {\rm Aut}(A)/{\rm Aut}^0 (A))$$ 
		is finite by Proposition \ref{prop21} (1). 
		Thus the result follows from Theorem \ref{thm12}. 
	\end{proof}
	
	\begin{proposition}\label{prop32}
		Let $V$ be a smooth complex projective variety. 
		Assume that $\kappa (V) \ge \dim (V) -1$. 
		Then every automorphism of $V$ has zero entropy.
		As a consequence, $V$ has at most finitely many non-isomorphic real forms.  
	\end{proposition}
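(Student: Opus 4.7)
The plan is to first establish that every automorphism of $V$ has zero topological entropy under the hypothesis $\kappa(V) \ge \dim V - 1$, and then deduce the finiteness of real forms by applying the contrapositive of Corollary~\ref{cor-ent0bis}.(2).

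For the zero-entropy statement I would separate into two cases according to $\kappa(V)$. If $\kappa(V) = \dim V$, then $V$ is of general type, and Matsumura's theorem gives that $\Aut(V)$ is finite; every automorphism then acts with finite order on $H^*(V,\R)$, and hence has zero entropy by the Gromov--Yomdin theorem.

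If $\kappa(V) = \dim V - 1$, I would consider the Iitaka fibration $\phi : V \dashrightarrow I(V)$. This is $\Aut(V)$-equivariant, because automorphisms of $V$ preserve pluricanonical sections and hence induce birational self-maps on the Iitaka base. The base $I(V)$ has Kodaira dimension equal to its dimension $n-1$, so it is of general type; consequently $\Bir(I(V))$ is finite, and each $f \in \Aut(V)$ induces a finite-order birational self-map $f_B$ on $I(V)$, whose dynamical degrees are therefore all equal to $1$. The generic fiber of $\phi$ is a smooth projective curve of Kodaira dimension zero, i.e.\ an elliptic curve, and automorphisms of elliptic curves have first dynamical degree $1$. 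Passing to a smooth birational model of $V$ on which $\phi$ becomes a morphism, and using that dynamical degrees are birational invariants for automorphisms of smooth projective varieties, I would then invoke the Dinh--Nguyen product (submultiplicativity) formula for dynamical degrees under a fibration to conclude $\lambda_i(f) = 1$ for every $i$. By Gromov--Yomdin, $f$ has zero topological entropy.

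The finiteness of real forms is then immediate from Corollary~\ref{cor-ent0bis}.(2): if $V$ had infinitely many non-isomorphic real forms, it would admit an automorphism of positive entropy, contradicting the above. The main technical obstacle lies in the case $\kappa(V) = \dim V - 1$, where one must combine the birational invariance of dynamical degrees with the product formula under the Iitaka fibration, so as to work on a convenient smooth birational model on which $\phi$ is a morphism without the need for an $\Aut(V)$-equivariant resolution of indeterminacies of $\phi$.
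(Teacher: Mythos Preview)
Your overall strategy matches the paper's---use the Iitaka/pluricanonical fibration, show the induced action on the base has finite order, observe that relative dynamical degrees along one-dimensional fibres are trivial, and apply the Dinh--Nguy\^en product formula---but there is a genuine error in your justification for finiteness on the base.

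You assert that ``the base $I(V)$ has Kodaira dimension equal to its dimension $n-1$, so it is of general type; consequently $\Bir(I(V))$ is finite.'' This is false. For a properly elliptic surface $S\to\mathbb{P}^1$ with $\chi(\mathcal{O}_S)\ge 3$ (such surfaces exist and have $\kappa(S)=1$), the Iitaka fibration is precisely $S\to\mathbb{P}^1$, whose base has $\kappa(\mathbb{P}^1)=-\infty$ and $\Bir(\mathbb{P}^1)=\mathrm{PGL}_2$, which is infinite. So neither the Kodaira dimension of the Iitaka base nor the finiteness of its full birational automorphism group is available to you.

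What \emph{is} true---and what the paper actually uses---is that the \emph{image} of $\Aut(V)$ in $\Bir(B)$ (equivalently, in $\GL(H^0(V,mK_V))$) is finite: this is Ueno's finiteness of the pluricanonical representation \cite[Theorem~14.10]{Ue75}. Once you replace your incorrect step by this citation, the rest of your argument (product formula, zero entropy, then Corollary~\ref{cor-ent0bis}(2) or equivalently Corollary~\ref{cor-ent0}) goes through and coincides with the paper's proof. The case split on $\kappa(V)=\dim V$ versus $\dim V-1$ is harmless but unnecessary: the paper treats both at once, since the fibre dimension is $\le 1$ in either case.
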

	
	\begin{proof} 
	The first statement is well-known. 
	Here we provide a proof for reader's convenience.
	Consider the pluricanonical map 
		$$\Phi := \Phi_{|mK_V|} : V \dasharrow B.$$ 
		Let $f \in \Aut(V)$ be an automorphism of $V$. By the finiteness of the pluricanonical representation \cite[Theorem 14.10]{Ue75}, the action $f_{\tilde{B}}$ of $f$ on an equivariant resolution $\tilde{B}$ of $B$ is finite. Thus, all the dynamical degrees of $f_{\tilde{B}}$ equal $1$. Since a general fiber of $\Phi$ is of dimension at most $1$, the relative dynamical degrees of $f$ are also $1$. Hence the first dynamical degree of $f$ is 1 and $f$ has zero entropy by the product formula (\cite[Theorem 1.1]{DN11} or \cite{Tr16}). Proposition~\ref{prop32} then follows from Corollary~\ref{cor-ent0}.
	\end{proof}

	Recall that a minimal surface $S$ with $\kappa (S) = 0$ is either a K3 surface, 
	an Enriques surface, an abelian surface or a hyperelliptic surface. 
	Recall also that an irrational surface $S$ with $\kappa (S) = -\infty$ admits a genus $0$ fibration $\pi : S \to B$, which is nothing but the Albanese morphism of $S$, over a smooth projective curve $B$ of genus $g(B) \ge 1$.

	\begin{proposition}\label{pro-IRHE}
		Let $S$ be a smooth complex projective surface birational to an irrational ruled surface
		or a hyperelliptic surface. 
		Then every automorphism of $S$ has zero entropy.
		As a consequence, $S$ has at most finitely many non-isomorphic real forms.
	\end{proposition}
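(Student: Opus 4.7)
The plan is to adapt the argument of Proposition~\ref{prop32}, replacing the pluricanonical fibration by the Albanese fibration. Specifically, I will show that every automorphism $f \in \Aut(S)$ has zero entropy; by the contrapositive of Corollary~\ref{cor-ent0bis}(2), this will imply that $S$ has at most finitely many non-isomorphic real forms.

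The first step is to exhibit, in both cases, a canonical fibration $\pi : S \to B$ onto a smooth projective curve of positive genus that is preserved by $\Aut(S)$. Since the irregularity $q$ is a birational invariant, $q(S) \ge 1$ in both cases. If $S$ is birational to an irrational ruled surface over a curve of genus $g \ge 1$, then $q(S) = g$ and the Albanese morphism $\alpha_S : S \to \mathrm{Alb}(S)$ has image a curve; composing with its normalization yields $\pi : S \to B$ with general fiber $\mathbb{P}^1$. If $S$ is birational to a hyperelliptic surface, then $q(S) = 1$, and the Albanese morphism $\pi := \alpha_S : S \to \mathrm{Alb}(S)$ is surjective onto an elliptic curve $B$ with general fiber an elliptic curve. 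In both situations, functoriality of the Albanese implies that every $f \in \Aut(S)$ descends to an automorphism $f_B \in \Aut(B)$.

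The second step is a dynamical degree computation. Since $g(B) \ge 1$, the induced map $f_B$ has zero topological entropy: when $g(B) \ge 2$ the group $\Aut(B)$ is finite, and when $B$ is elliptic every automorphism is a translation composed with a finite-order group automorphism, hence an isometry for a flat metric. On the other hand, the relative first dynamical degree $\lambda_1(f \mid \pi)$ equals $1$ because $f$ restricts to an automorphism on the generic fiber, which is a curve. By the product formula of Dinh--Nguyen~\cite[Theorem 1.1]{DN11} (see also~\cite{Tr16}),
$$\lambda_1(f) = \max\bigl(\lambda_0(f_B)\cdot\lambda_1(f \mid \pi),\ \lambda_1(f_B)\cdot\lambda_0(f \mid \pi)\bigr) = 1,$$
so the topological entropy $h(f) = \log\lambda_1(f) = 0$, as required.

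The only delicate point, rather than a genuine obstacle, is verifying that in the hyperelliptic case the generic fiber of $\alpha_S$ remains an elliptic curve when $S$ is non-minimal; this holds because the generic fiber of the Albanese is the preimage of a generic point of $B$ and therefore avoids the finitely many fibers touched by the blow-ups relating $S$ to its minimal model. With that observation in hand, the remainder of the argument is a direct transcription of the reasoning used in Proposition~\ref{prop32}.
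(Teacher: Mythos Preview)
Your proof is correct and follows essentially the same approach as the paper: use the Albanese morphism to obtain an $\Aut(S)$-equivariant fibration onto a curve, then apply the product formula of~\cite{DN11} to conclude zero entropy, and finish with the real-forms corollary. The paper's argument is more terse (it does not spell out why $f_B$ has zero entropy, nor the structure of the generic fiber), and it cites Corollary~\ref{cor-ent0} rather than Corollary~\ref{cor-ent0bis}(2), but these are cosmetic differences.
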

	
	The first statement of Proposition~\ref{pro-IRHE}
	is also well-known; 
	see~\cite[Proposition 1]{Ca99} for a more general statement. As the proof is simple, we include it here for reader's convenience.
	
	\begin{proof} 

		Let $S \to B$ be the Albanese morphism, which is a fibration with $\dim B = 1$ in each case. By the universal property, every automorphism of $S$ preserves this fibration. Since the base and general fibers of the fibration are curves, by the product formula (\cite[Theorem 1.1]{DN11} or \cite{Tr16}), every automorphism of $S$ has zero entropy. Proposition~\ref{pro-IRHE} then follows from Corollary~\ref{cor-ent0}. 
	\end{proof}

	\begin{proposition}\label{prop34}
		Let $S$ be a smooth complex projective surface which is birational to an abelian surface $A$. 
		Then $S$ has at most finitely many non-isomorphic real forms.
	\end{proposition}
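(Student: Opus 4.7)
The case $S = A$ is covered by Corollary \ref{cor-ab}, so assume $S$ is non-minimal. Since $\kappa(S) = 0$, the minimal model of $S$ is unique, and the birational equivalence $S \dashrightarrow A$ is a morphism $\pi \colon S \to A$ factoring as a composition of blow-ups. For clarity I describe the argument when $\pi$ is the simultaneous blow-up at a finite set $\Sigma = \{p_1, \dots, p_n\} \subset A$ of distinct points; the general case with infinitely near centers goes through by the same argument. The plan is to verify the hypothesis of Theorem \ref{thm12}, namely the finiteness of $H^1(G_{\C/\R}, \Aut(S)/\Aut^0(S))$.

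First I would identify $\Aut(S)$ explicitly inside $\Aut(A)$. Each $f \in \Aut(S)$ descends to a birational self-map of $A$, which is automatically regular since $A$ is minimal of non-negative Kodaira dimension; the resulting homomorphism $\Aut(S) \to \Aut(A)$ is injective (an automorphism in the kernel is the identity on the dense open set $S \setminus \pi^{-1}(\Sigma)$), with image exactly the decomposition group $\Dec(A, \Sigma)$. Moreover $\Aut^0(S) \subseteq \Aut^0(A) = A$ is a connected subgroup of the abelian variety $A$ preserving the finite set $\Sigma$, hence is trivial, so $\Aut(S)/\Aut^0(S) \simeq \Dec(A, \Sigma)$. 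I would then exploit the normal filtration $\{1\} \subseteq \Ine(A, \Sigma) \subseteq \Dec(A, \Sigma)$: both terms are $G_{\C/\R}$-subgroups (since the hypothesis that $S$ has a real form forces $\Sigma$ to be $G_{\C/\R}$-invariant), and the quotient $\Dec(A, \Sigma) / \Ine(A, \Sigma)$ embeds into $S_n$ and is finite. By Proposition \ref{prop21}(2), the problem reduces to showing the finiteness of $H^1(G_{\C/\R}, \Ine(A, \Sigma))$.

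The remaining step, which I anticipate to be the main technical obstacle, is the analysis of $\Ine(A, \Sigma)$. Fixing $p_0 \in \Sigma$ and using the decomposition $\Aut(A) = A \rtimes \Aut_0(A)$, the translation part of any $(a, h) \in \Ine(A, \Sigma)$ is forced to be $a = p_0 - h(p_0)$, so the projection onto $\Aut_0(A)$ identifies $\Ine(A, \Sigma)$ with
$$\Ine_0 \colonec \{h \in \Aut_0(A) : h(q) = q \text{ for all } q \in \Sigma - p_0\},$$
which is commensurable with the subgroup $\{h \in \Aut_0(A) : h|_B = \id_B\}$, where $B \subseteq A$ is the abelian subvariety obtained as the Zariski closure of the subgroup $\langle \Sigma - p_0 \rangle$. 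One should then realize this last group as an arithmetic subgroup of the closed $\Q$-algebraic subgroup of $\GL(H^1(A, \Q))$ cut out by the linear conditions $(h - \id)|_B = 0$; since $\Sigma$, and hence $B$, is $G_{\C/\R}$-invariant, the embedding is $G_{\C/\R}$-equivariant. Proposition \ref{prop21}(1) then yields the finiteness of $H^1(G_{\C/\R}, \Ine_0)$, and tracing back up the filtration before invoking Theorem \ref{thm12} completes the proof.
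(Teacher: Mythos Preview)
Your argument for the case of distinct blow-up centers is correct and in fact cleaner than the paper's: the paper splits into Case~2 (some difference $P - o$ non-torsion, forcing $\Dec(A,o,P)$ to be finite) and Case~3 (all torsion, giving a congruence subgroup), whereas your construction of the abelian subvariety $B$ generated by $\Sigma - p_0$ handles both at once. The commensurability of $\Ine_0$ with $\{h : h|_B = \id\}$ does hold (if $N = |B'/B|$ with $B'$ the full Zariski closure, then for any $h$ trivial on $B$ and any $q \in \Sigma - p_0$ one has $N(h(q)-q)=0$, so the map $h \mapsto (h(q)-q)_q$ lands in the finite group $A[N]^{|\Sigma|}$ with kernel $\Ine_0$), and the resulting group is arithmetic as you say.

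The gap is the sentence ``the general case with infinitely near centers goes through by the same argument.'' It does not. If $\pi$ factors through the blow-up of a length-$2$ subscheme supported at a single point $o$, then after taking $o$ as origin you get $\Sigma_{\rm red} - p_0 = \{0\}$, $B = 0$, and your method only shows $\Dec(A,\Sigma_{\rm red}) = \Aut_{\rm group}(A)$ is arithmetic. But $\Aut(S)$ sits inside this as the stabiliser of a tangent direction $[v] \in \P(T_{A,o})$, which can have infinite index and need not itself be arithmetic in any obvious $\Q$-group (the condition $[(dh)_o v] = [v]$ is not $\Q$-linear on $H_1(A,\Q)$). The paper handles exactly this situation in its Case~1 by a different mechanism: the tangent representation $\Dec(A,\Sigma) \hookrightarrow \GL(T_{A,o})$ lands in a Borel, so the group is solvable, hence polycyclic by Mal'cev, and one finishes via Proposition~\ref{prop21}(3) rather than~(1). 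You need either this solvability observation or something equivalent; the arithmeticity route alone does not close the argument.
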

	
	\begin{proof} 
	
		It suffices by Theorem~\ref{thm21} 
		to show that
		$H^1(G_{\C/\R}, \Aut(S))$ is finite.
		
		By running the minimal model program, 
		$S$ is obtained by a sequence of blow-ups 
		$$\pi : S = S_k \to \cdots \to  S_1 \to S_0 = A$$ at $k \ge 0$ reduced points.
		If $k = 0$, then Proposition~\ref{prop34}
		is contained in Corollary~\ref{cor-ab}.
		Suppose that $k = 1$, then we can choose the origin of $A$ to be the blow-up center 
		$o$ of $\pi : S \to A$, 
		and 
		$${\rm Aut}(S) \simeq {\rm Dec} (A, o) = {\rm Aut}_{{\rm group}}(A).$$ 
		Since ${\rm Aut}_{{\rm group}}(A)$ is an arithmetic $G_{\C/\R}$-group,
		$H^1(G_{\C/\R}, \Aut(S))$ is finite by Proposition~\ref{prop21} (1).

		Now assume that $k \ge 2$.
		Let $E_1,\ldots,E_k$ be the irreducible components of the exceptional set of $\pi$. 
		Then 
		$$H := {\rm Dec}(S, E_1, \ldots, E_k)$$
		is a finite index subgroup of $\Aut(S)$ and $H$ descends to a subgroup of $\Dec(A, \gS)$. Here $\gS \subset A$ is the blow-up center of $S_2 \to A$, which is a subscheme of length $2$, and $\Dec(A, \gS)$ is the decomposition group of the \textit{closed subscheme} $\Sigma \subset A$.
		We choose a point $o$ in the support of $\gS$ as the origin of $A$.
		
		\medskip
		{\bf \noindent Case 1: $\gS$ is supported at one point $o \in A$.}

		In this case, we have, for some $v \in T_{A,o}$, 
		$$\Dec(A, \gS) = \Set{f \in \Aut_{{\rm group}}(A) | 
		[(df)_o(v)] = [v] \in \P(T_{A,o})}.$$
		
		\begin{claim}\label{cl22}
			$\Dec(A, \gS)$ is a solvable group.
		\end{claim}
		
		\begin{proof}
			By assumption, there is a ${\mathbb C}$-basis $\langle v, u \rangle$ of $T_{A, o}$ such that the action of $f \in \Dec(A, \gS)$ on the tangent space $T_{A, o}$ is of the form
			$$(df)_o = \begin{pmatrix} 
				c(f) & a(f) \\ 
				0& b(f)\\ 
			\end{pmatrix}\,\, (c(f), b(f) \in {\mathbb C}^{\times}, a(f) \in {\mathbb C})$$
			with respect to the basis $\langle v, u \rangle$. 
			Thus $\Dec(A, \gS)$ is solvable, as the representation 
			$$\Aut_{{\rm group}}(A) = {\rm Dec}(A,o) \to {\rm GL}(T_{A,o}), \ \ f \mapsto (df)_o$$ 
			is faithful.
		\end{proof}
		
		Consider the natural faithful representation
		$$\rho : \Dec(A, \gS) \subset \Aut_{\rm group}(A) \hto \GL(H^1(A,\Z)).$$  
		Since $\Dec(A, \gS)$ is solvable by Claim~\ref{cl22}, 
		and since $H^1(A,\Z)$ is a free abelian group of finite rank, $\Dec(A, \gS)$ is then a polycyclic group by Malcev's theorem~\cite[Page 26, Corollary 1]{Se83}.
		It follows that $H$ is polycyclic as well, and $\Aut(S)$ is virtually polycyclic.
		Thus $H^1(G_{\C/\R}, \Aut (S))$ is finite by Proposition~\ref{prop21} (3). 
		
		\medskip
		{\bf \noindent Case 2: $\gS$ is supported at two points $o,P \in A$ such that $P$ is not torsion.}
		
		Let $B$ be the irreducible component of the Zariski closure of 
		$\{nP \,|\, n \in \Z\}$
		containing the origin $o$:
		$$o \in B \subset \overline{\{nP \,|\, n \in \Z\}}^{{\rm Zar}}.$$ 
		Since $P$ is not a torsion point,
		$B$ is either an elliptic curve $E$ (with the origin $o$) or $A$. 
		\begin{claim}\label{cl21}
			$\Dec(A,o,P)$ is a finite group.
		\end{claim}
		\begin{proof} 
			Since $\Dec (A, o, P)$ acts trivially on $\{nP \,|\, n \in \Z\}$,  
			and therefore on $B$, the result follows if $B = A$. 
			Consider the case where $B = E$. Consider the elliptic curve $C := A/E$ and the quotient morphism $p : A \to C$. We choose $p(o) \in C$ as the origin of the elliptic curve $C$. 
			Then $\Dec(A,o, P)$ embeds into $\Aut_{\group}(C)$. 
			Since $C$ is an elliptic curve, the group $\Aut_{\group}(C)$ is finite. 
			Thus the result follows also in the case where $B=E$.   
		\end{proof}
		
		Recall that $H \subset \Dec(A, o, P)$ and $H$ is a finite index subgroup of $\Aut(S)$,
		Claim~\ref{cl21} implies that $\Aut(S)$ is finite, 
		hence $H^1(G_{\C/\R}, \Aut (S))$ is finite.
		
		\medskip
		{\bf \noindent Case 3: $\gS$ is supported at two points $o,P \in A$ such that $P$ is torsion.}
		
		This is the last case we need to consider.  
		Thanks to the first two cases,
		up to rearranging the blow-up sequence,
		we can reduce to the case where 
		$S \to A$ is the blow-up at finitely many distinct torsion points, 
		including the origin $o$, of $A$. Then 
		$${\rm Ine}_{{\rm group}}(A, A[N]) \subset H \subset {\rm Dec}_{{\rm group}}(A, A[N]) = {\rm Aut}_{{\rm group}}(A)$$
		for some $N > 0$, where $A[N] \simeq ({\mathbb Z}/N)^4$ is the subgroup of torsion points of order dividing $N$. 
		Here we note that $A[N]$ is preserved by ${\rm Aut}_{{\rm group}}(A)$ and 
		$$[{\rm Dec}_{{\rm group}}(A, A[N]) : {\rm Ine}_{{\rm group}}(A, A[N])] < \infty.$$ 
		Since $\Aut_{{\rm group}}(A)$ is arithmetic, it follows that $H$ and hence $\Aut(S)$ are also arithmetic. Therefore, by Proposition \ref{prop21} (1), $H^1(G_{\C/\R}, {\rm Aut}(S))$ is a finite set. Hence $S$ has at most finitely many real forms by Theorem \ref{thm21}.
	\end{proof}

	\begin{proof}[Proof of Theorem~\ref{thmRFsurf}] Let $S$ be a smooth complex projective surface with infinitely many mutually non-isomorphic real forms. We may assume that $S$ is not rational. Then by Propositions \ref{prop32}, \ref{pro-IRHE} and \ref{prop34}, $S$ is birational to a K3 surface or an Enriques surface. 
	
	Suppose that $S$ is minimal. Then $S$ is a K3 surface or an Enriques surface. By \cite[Theorem 2.1]{Ka97} (see also \cite{St85} and \cite{Na85}), the cone conjecture holds for $S$, that is, there exists a rational polyhedral fundamental domain for the action of $\Aut^{*}(S)$ on the cone $\Nef^{+}(S)$. By Theorem \ref{thm13}, $S$ has at most finitely many non-isomorphic real forms. This is a contradiction and therefore, $S$ is non-minimal. 
	\end{proof}
	
	\begin{remark}\label{rem31}
	Let $S$ be a smooth projective surface. Then the group $\Aut(S)/\Aut^0(S)$ is finitely generated unless $S$ is either rational or non-minimal and birational to an abelian surface, a K3 surface or an Enriques surface.
    Indeed, our proof of Theorem~\ref{thmRFsurf} shows that the group $\Aut(S)/\Aut^0(S)$ is either a polycyclic group or an arithmetic group, up to finite kernel and cokernel, or satisfies the cone conjecture. In the first two cases $\Aut(S)/\Aut^0(S)$ is clearly finitely generated. In the last case one can deduce from \cite[Corollary 4.15]{Lo14} that $\Aut(S)/\Aut^0(S)$ is finitely generated as well. It would be interesting to study relations between finiteness of real forms and finite generation of the group $\Aut(S)/\Aut^0(S)$ more closely.
	\end{remark}

	\section{Kummer surfaces of product type}\label{sec-Kummer}
	
	Throughout this section, let
	$\bk$ be a field of characteristics zero 
	(e.g. $\bk = \ol{\Q}$, $\R$, or $\C$).
	
	\ssec{Kummer surfaces of product type
	and their double Kummer pencils}\label{ssec-Kummer}

	Let $E$ and $F$ be the projective elliptic curves over $\bk$
	given by the affine Weierstrass equation
	\begin{equation}\label{eq31}
		y^2 = x(x-1)(x-s),
	\end{equation}
	\begin{equation}\label{eq32}
		y'^2 = x'(x'-1)(x'-t)
	\end{equation}
	for some $s,t \in \bk \setminus \Set{0,1}$
	respectively.
	Note that $E/\langle -1_E \rangle = \P^1$, the associated quotient map $E \to \P^1$ is given by $(x,y)\mapsto x$, 
	and the points $0$, $1$, $t$ and $\infty$ of $\P^1$ are exactly the branch points of this quotient map. The same holds for $F$ if we replace $s$ by $t$.
	Let
	$$ \tau_0, \tau_1,\tau_2,\tau_3 \in E  ; \ \ \ \tau_0',\tau_1',\tau_2',\tau_3' \in F$$
	be the pre-images of 
	$$0,1,s,\infty \in \P^1; \ \ \  0,1,t,\infty \in \P^1 $$
under the double covers $E \to \P^1$, $F \to \P^1$ respectively.
We set $\tau_0$ and $\tau_0'$ to be the origins of $E$ and $F$ respectively;
the points $\tau_i \in E, \tau'_i \in F$ are thus $2$-torsion.

	Let
	$$X := {\rm Km} (E \times F)$$
	be the Kummer K3 surface associated to the product abelian surface $E \times F$, that is, the minimal resolution of the quotient surface $E \times F/\langle -1_{E\times F} \rangle$. 
	Then $X$ contains 24 smooth $(-2)$-curves, 
	which form the so-called double Kummer pencil on $X$, as in Figure~\ref{fig1}. 
	Here the smooth rational curves $E_i$, $F_i$ ($0 \le i \le 3$) arise from the elliptic curves $E \times \{\tau'_i\}$, $\{\tau_i\} \times F$ on $E \times F$, 
	and $C_{ij}$ ($0\le i,j \le 3$) are the exceptional curves over the $A_1$-singularities of the quotient surface $E \times F/\langle -1_{E\times F} \rangle$. 
	Each of these $24$ curves is defined over $\bk$,
	as well as the points 
	$$P_{ij} \cnec E_i \cap C_{ij} \ \text{ and } \ P_{ij}' \cnec F_j \cap C_{ij}.$$

	\begin{figure}
		\unitlength 0.1in
		\begin{picture}(25.000000,24.000000)(-1.000000,-23.500000)
			\put(4.500000, -22.000000){\makebox(0,0)[rb]{$E_0$}}%
			\put(9.500000, -22.000000){\makebox(0,0)[rb]{$E_1$}}%
			\put(14.500000, -22.000000){\makebox(0,0)[rb]{$E_2$}}%
			\put(19.500000, -22.000000){\makebox(0,0)[rb]{$E_3$}}%
			\put(0.250000, -18.500000){\makebox(0,0)[lb]{$F_0$}}%
			\put(0.250000, -13.500000){\makebox(0,0)[lb]{$F_1$}}%
			\put(0.250000, -8.500000){\makebox(0,0)[lb]{$F_2$}}%
			\put(0.250000, -3.500000){\makebox(0,0)[lb]{$F_3$}}%
			\put(6.000000, -16.000000){\makebox(0,0)[lt]{$C_{00}$}}%
			\put(6.000000, -11.000000){\makebox(0,0)[lt]{$C_{01}$}}%
			\put(6.000000, -6.000000){\makebox(0,0)[lt]{$C_{02}$}}%
			\put(6.000000, -1.000000){\makebox(0,0)[lt]{$C_{03}$}}%
			\put(11.000000, -16.000000){\makebox(0,0)[lt]{$C_{10}$}}%
			\put(11.000000, -11.000000){\makebox(0,0)[lt]{$C_{11}$}}%
			\put(11.000000, -6.000000){\makebox(0,0)[lt]{$C_{12}$}}%
			\put(11.000000, -1.000000){\makebox(0,0)[lt]{$C_{13}$}}%
			\put(16.000000, -16.000000){\makebox(0,0)[lt]{$C_{20}$}}%
			\put(16.000000, -11.000000){\makebox(0,0)[lt]{$C_{21}$}}%
			\put(16.000000, -6.000000){\makebox(0,0)[lt]{$C_{22}$}}%
			\put(16.000000, -1.000000){\makebox(0,0)[lt]{$C_{23}$}}%
			\put(21.000000, -16.000000){\makebox(0,0)[lt]{$C_{30}$}}%
			\put(21.000000, -11.000000){\makebox(0,0)[lt]{$C_{31}$}}%
			\put(21.000000, -6.000000){\makebox(0,0)[lt]{$C_{32}$}}%
			\put(21.000000, -1.000000){\makebox(0,0)[lt]{$C_{33}$}}%
			\special{pa 500 2200}%
			\special{pa 500 0}%
			\special{fp}%
			\special{pa 1000 2200}%
			\special{pa 1000 0}%
			\special{fp}%
			\special{pa 1500 2200}%
			\special{pa 1500 0}%
			\special{fp}%
			\special{pa 2000 2200}%
			\special{pa 2000 0}%
			\special{fp}%
			\special{pa 0 1900}%
			\special{pa 450 1900}%
			\special{fp}%
			\special{pa 550 1900}%
			\special{pa 950 1900}%
			\special{fp}%
			\special{pa 1050 1900}%
			\special{pa 1450 1900}%
			\special{fp}%
			\special{pa 1550 1900}%
			\special{pa 1950 1900}%
			\special{fp}%
			\special{pa 0 1400}%
			\special{pa 450 1400}%
			\special{fp}%
			\special{pa 550 1400}%
			\special{pa 950 1400}%
			\special{fp}%
			\special{pa 1050 1400}%
			\special{pa 1450 1400}%
			\special{fp}%
			\special{pa 1550 1400}%
			\special{pa 1950 1400}%
			\special{fp}%
			\special{pa 0 900}%
			\special{pa 450 900}%
			\special{fp}%
			\special{pa 550 900}%
			\special{pa 950 900}%
			\special{fp}%
			\special{pa 1050 900}%
			\special{pa 1450 900}%
			\special{fp}%
			\special{pa 1550 900}%
			\special{pa 1950 900}%
			\special{fp}%
			\special{pa 0 400}%
			\special{pa 450 400}%
			\special{fp}%
			\special{pa 550 400}%
			\special{pa 950 400}%
			\special{fp}%
			\special{pa 1050 400}%
			\special{pa 1450 400}%
			\special{fp}%
			\special{pa 1550 400}%
			\special{pa 1950 400}%
			\special{fp}%
			\special{pa 200 2000}%
			\special{pa 600 1600}%
			\special{fp}%
			\special{pa 200 1500}%
			\special{pa 600 1100}%
			\special{fp}%
			\special{pa 200 1000}%
			\special{pa 600 600}%
			\special{fp}%
			\special{pa 200 500}%
			\special{pa 600 100}%
			\special{fp}%
			\special{pa 700 2000}%
			\special{pa 1100 1600}%
			\special{fp}%
			\special{pa 700 1500}%
			\special{pa 1100 1100}%
			\special{fp}%
			\special{pa 700 1000}%
			\special{pa 1100 600}%
			\special{fp}%
			\special{pa 700 500}%
			\special{pa 1100 100}%
			\special{fp}%
			\special{pa 1200 2000}%
			\special{pa 1600 1600}%
			\special{fp}%
			\special{pa 1200 1500}%
			\special{pa 1600 1100}%
			\special{fp}%
			\special{pa 1200 1000}%
			\special{pa 1600 600}%
			\special{fp}%
			\special{pa 1200 500}%
			\special{pa 1600 100}%
			\special{fp}%
			\special{pa 1700 2000}%
			\special{pa 2100 1600}%
			\special{fp}%
			\special{pa 1700 1500}%
			\special{pa 2100 1100}%
			\special{fp}%
			\special{pa 1700 1000}%
			\special{pa 2100 600}%
			\special{fp}%
			\special{pa 1700 500}%
			\special{pa 2100 100}%
			\special{fp}%
		\end{picture}%
		\caption{Curves $E_i$, $F_j$ and $C_{ij}$}
		\label{fig1}
	\end{figure}
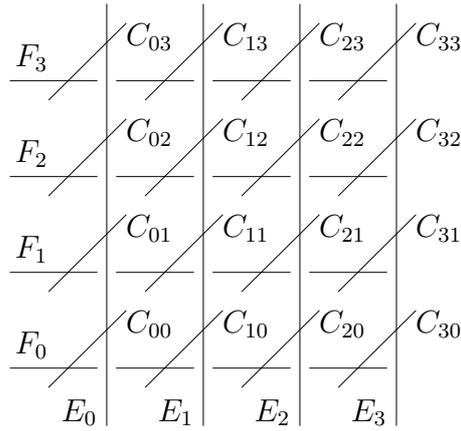

	We can use the same $x$ (resp. $x'$) in the defining equations of $E$ and $F$ 
	to denote the induced
	affine coordinates of $E_i$ and $F_j$, so that
	\begin{equation}\label{eq33}
		x(P_{i0}) = 0, \ \ x(P_{i1}) = 1, \ \  x(P_{i2}) = s, \ \ x(P_{i3}) = \infty
	\end{equation}
	on $E_i$ with respect to the coordinate $x$ and
	\begin{equation}\label{eq34}
		x'(P_{0j}') = 0, \ \  x'(P_{1j}') = 1, \ \ x'(P_{2j}') = t, \ \ x'(P_{3j}') = \infty
	\end{equation}
	on $F_j$ with respect to the coordinate $x'$.
	
	Note that the coordinate values of points are {\it different} from the ones in \cite{DO19} and \cite{DOY23} as we found that the current ones are more convenient to study the Enriques surface $Z$ defined in the next subsection, whereas the previous ones were more convenient to study the rational surface $T$ there.

\ssec{First Jacobian fibration}\label{subsec:first_jac}
	
	From now on
	until the end of Section~\ref{sec-Kummer}, 
	we assume that $\bk$ is
	algebraically closed (of characteristic zero).

	Let $D_1$ be the divisor on $X$ defined by 
	$$D_1 := F_0 + C_{10} + E_1 + C_{13} + F_3 + C_{23} + E_2 + C_{20};$$
	see Figure \ref{fig2}.
	Since $D_1$ is nef and $D_1^2 = 0$, 
	it defines an elliptic fibration
	$${\Phi}_{|D_1|} : X \to B_1 := {\mathbb P}^1,$$
	and $D_1$ is a fiber as it is reduced and connected
	(see e.g.~\cite[Proposition 2.3.10]{Hu16}).
	Define also
	$$
	D_1' := E_0 + C_{01} + F_1 + C_{31} + E_3 + C_{32} + F_2 + C_{02};$$
	see Figure \ref{fig2}.
	As $D_1'$ is reduced and connected, and satisfies
	$D_1'^2 = 0$ and $D_1 \cdot D'_1 = 0$,
	necessarily $D_1'$ is also a fiber of ${\Phi}_{|D_1|}$ 
	by the Hodge index theorem.
	Note that a smooth rational curve $C$ on $X$ is a section of $\Phi_{|D_1|}$ if and only if $C\cdot D_1 = 1$. In particular, ${\Phi}_{|D_1|}$ has sections $C_{21}$, $C_{12}$, $C_{03}$ and $C_{30}$.

	\begin{figure}
		\unitlength 0.1in
		\begin{picture}(25.000000,24.000000)(-1.000000,-23.500000)
		
			\put(-10.500000, -22.000000){\makebox(0,0)[rb]{$E_0$}}%
			\put(-5.500000, -22.000000){\makebox(0,0)[rb]{$E_1$}}%
			\put(-0.500000, -22.000000){\makebox(0,0)[rb]{$E_2$}}%
			\put(4.500000, -22.000000){\makebox(0,0)[rb]{$E_3$}}%
			\put(-14.750000, -18.500000){\makebox(0,0)[lb]{$F_0$}}%
			\put(-14.750000, -13.500000){\makebox(0,0)[lb]{$F_1$}}%
			\put(-14.750000, -8.500000){\makebox(0,0)[lb]{$F_2$}}%
			\put(-14.750000, -3.500000){\makebox(0,0)[lb]{$F_3$}}%
			\put(-9.000000, -16.000000){\makebox(0,0)[lt]{$C_{00}$}}%
			\put(-9.000000, -11.000000){\makebox(0,0)[lt]{$C_{01}$}}%
			\put(-9.000000, -6.000000){\makebox(0,0)[lt]{$C_{02}$}}%
			\put(-9.000000, -1.000000){\makebox(0,0)[lt]{$C_{03}$}}%
			\put(-4.000000, -16.000000){\makebox(0,0)[lt]{$C_{10}$}}%
			\put(-4.000000, -11.000000){\makebox(0,0)[lt]{$C_{11}$}}%
			\put(-4.000000, -6.000000){\makebox(0,0)[lt]{$C_{12}$}}%
			\put(-4.000000, -1.000000){\makebox(0,0)[lt]{$C_{13}$}}%
			\put(1.000000, -16.000000){\makebox(0,0)[lt]{$C_{20}$}}%
			\put(1.000000, -11.000000){\makebox(0,0)[lt]{$C_{21}$}}%
			\put(1.000000, -6.000000){\makebox(0,0)[lt]{$C_{22}$}}%
			\put(1.000000, -1.000000){\makebox(0,0)[lt]{$C_{23}$}}%
			\put(6.000000, -16.000000){\makebox(0,0)[lt]{$C_{30}$}}%
			\put(6.000000, -11.000000){\makebox(0,0)[lt]{$C_{31}$}}%
			\put(6.000000, -6.000000){\makebox(0,0)[lt]{$C_{32}$}}%
			\put(6.000000, -1.000000){\makebox(0,0)[lt]{$C_{33}$}}%
			
			\linethickness{0.1mm}
			\put(-10.000000, -22.000000){\line(0,1){22}}
                          \linethickness{1mm}
			 \put(-5.000000, -22.000000){\line(0,1){22}}
                          \linethickness{1mm}
			 \put(0.000000, -22.000000){\line(0,1){22}}
			 \linethickness{0.1mm}
			 \put(5.000000, -22.000000){\line(0,1){22}}
                          \linethickness{1mm}
			 \put(-15.000000, -19.000000){\line(1,0){4.5}}
			\linethickness{1mm}
			\put(-9.500000, -19.000000){\line(1,0){4}}
                         \linethickness{1mm}
			\put(-4.500000, -19.000000){\line(1,0){4}}
                         \linethickness{1mm}
			\put(0.500000, -19.000000){\line(1,0){4}}
			 \linethickness{0.1mm}
			 \put(-15.000000, -14.000000){\line(1,0){4.5}}
			 \linethickness{0.1mm}
			 \put(-9.500000, -14.000000){\line(1,0){4}}
			 \linethickness{0.1mm}
			 \put(-4.500000, -14.000000){\line(1,0){4}}
			\linethickness{0.1mm}
			 \put(0.500000, -14.000000){\line(1,0){4}}
			 \linethickness{0.1mm}
			 \put(-15.000000, -9.000000){\line(1,0){4.5}}
			 \linethickness{0.1mm}
			 \put(-9.500000, -9.000000){\line(1,0){4}}
			 \linethickness{0.1mm}
			 \put(-4.500000, -9.000000){\line(1,0){4}}
			 \linethickness{0.1mm}
			 \put(0.500000, -9.000000){\line(1,0){4}}
			 \linethickness{1mm}
			 \put(-15.000000, -4.000000){\line(1,0){4.5}}
			 \linethickness{1mm}
			 \put(-9.500000, -4.000000){\line(1,0){4}}
			 \linethickness{1mm}
			 \put(-4.500000, -4.000000){\line(1,0){4}}
			 \linethickness{1mm}
			 \put(0.500000, -4.000000){\line(1,0){4}}
			                
                \put(19.500000, -22.000000){\makebox(0,0)[rb]{$E_0$}}%
			\put(24.500000, -22.000000){\makebox(0,0)[rb]{$E_1$}}%
			\put(29.500000, -22.000000){\makebox(0,0)[rb]{$E_2$}}%
			\put(34.500000, -22.000000){\makebox(0,0)[rb]{$E_3$}}%
			\put(15.250000, -18.500000){\makebox(0,0)[lb]{$F_0$}}%
			\put(15.250000, -13.500000){\makebox(0,0)[lb]{$F_1$}}%
			\put(15.250000, -8.500000){\makebox(0,0)[lb]{$F_2$}}%
			\put(15.250000, -3.500000){\makebox(0,0)[lb]{$F_3$}}%
			\put(21.000000, -16.000000){\makebox(0,0)[lt]{$C_{00}$}}%
			\put(21.000000, -11.000000){\makebox(0,0)[lt]{$C_{01}$}}%
			\put(21.000000, -6.000000){\makebox(0,0)[lt]{$C_{02}$}}%
			\put(21.000000, -1.000000){\makebox(0,0)[lt]{$C_{03}$}}%
			\put(26.000000, -16.000000){\makebox(0,0)[lt]{$C_{10}$}}%
			\put(26.000000, -11.000000){\makebox(0,0)[lt]{$C_{11}$}}%
			\put(26.000000, -6.000000){\makebox(0,0)[lt]{$C_{12}$}}%
			\put(26.000000, -1.000000){\makebox(0,0)[lt]{$C_{13}$}}%
			\put(31.000000, -16.000000){\makebox(0,0)[lt]{$C_{20}$}}%
			\put(31.000000, -11.000000){\makebox(0,0)[lt]{$C_{21}$}}%
			\put(31.000000, -6.000000){\makebox(0,0)[lt]{$C_{22}$}}%
			\put(31.000000, -1.000000){\makebox(0,0)[lt]{$C_{23}$}}%
			\put(36.000000, -16.000000){\makebox(0,0)[lt]{$C_{30}$}}%
			\put(36.000000, -11.000000){\makebox(0,0)[lt]{$C_{31}$}}%
			\put(36.000000, -6.000000){\makebox(0,0)[lt]{$C_{32}$}}%
			\put(36.000000, -1.000000){\makebox(0,0)[lt]{$C_{33}$}}%
			
			\linethickness{1mm}
			\put(20.000000, -22.000000){\line(0,1){22}}
                          \linethickness{0.1mm}
			 \put(25.000000, -22.000000){\line(0,1){22}}
                          \linethickness{0.1mm}
			 \put(30.000000, -22.000000){\line(0,1){22}}
			 \linethickness{1mm}
			 \put(35.000000, -22.000000){\line(0,1){22}}
                          \linethickness{0.1mm}
			 \put(15.000000, -19.000000){\line(1,0){4.5}}
			\linethickness{0.1mm}
			\put(20.500000, -19.000000){\line(1,0){4}}
                         \linethickness{0.1mm}
			\put(25.500000, -19.000000){\line(1,0){4}}
                         \linethickness{0.1mm}
			\put(30.500000, -19.000000){\line(1,0){4}}
			 \linethickness{1mm}
			 \put(15.000000, -14.000000){\line(1,0){4.5}}
			 \linethickness{1mm}
			 \put(20.500000, -14.000000){\line(1,0){4}}
			 \linethickness{1mm}
			 \put(25.500000, -14.000000){\line(1,0){4}}
			\linethickness{1mm}
			 \put(30.500000, -14.000000){\line(1,0){4}}
			 \linethickness{1mm}
			 \put(15.000000, -9.000000){\line(1,0){4.5}}
			 \linethickness{1mm}
			 \put(20.500000, -9.000000){\line(1,0){4}}
			 \linethickness{1mm}
			 \put(25.500000, -9.000000){\line(1,0){4}}
			 \linethickness{1mm}
			 \put(30.500000, -9.000000){\line(1,0){4}}
			 \linethickness{0.1mm}
			 \put(15.000000, -4.000000){\line(1,0){4.5}}
			 \linethickness{0.1mm}
			 \put(20.500000, -4.000000){\line(1,0){4}}
			 \linethickness{0.1mm}
			 \put(25.500000, -4.000000){\line(1,0){4}}
			 \linethickness{0.1mm}
			 \put(30.500000, -4.000000){\line(1,0){4}}
			                
		\end{picture}%
		\caption{Divisors $D_1$ and $D'_1$}
		\label{fig2}
	\end{figure}
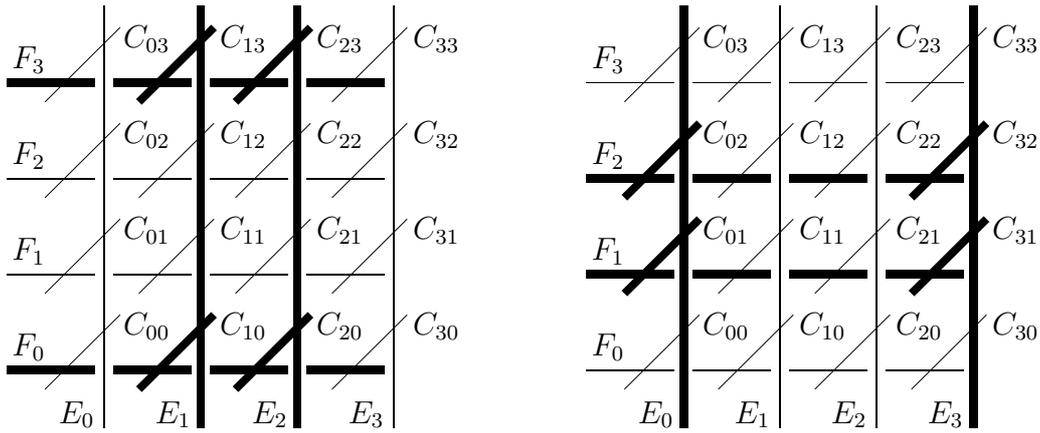

	We choose $C_{21}$ as the zero section of
	$\Phi_{|D_1|}$, turning it
    into  a Jacobian fibration.
	Let $F_{1, \eta}$ be the generic fiber of $\Phi_{|D_1|}$. 
	Then $(F_{1, \eta}, F_{1, \eta} \cap C_{21})$ is an elliptic curve with the origin $F_{1, \eta} \cap C_{21}$ over the function field $\bk(B_1)$. 
	The group of translations of the elliptic curve 
	$(F_{1, \eta}, F_{1, \eta} \cap C_{21})$ over $\bk(B_1)$ is called the Mordell-Weil group of $\Phi_{|D_1|}$, denoted by 
	${\rm MW} (\Phi_{|D_1|})$. The group ${\rm MW} (\Phi_{|D_1|})$ is an abelian group and it corresponds bijectively 
	to the set of sections of $\Phi_{|D_1|}$ in a natural way. 
	Moreover, as $X$ is a minimal surface,
	$${\rm MW} (\Phi_{|D_1|}) \subset {\rm Bir} (X/B_1) = {\rm Aut}(X/B_1) \subset {\rm Aut}(X).$$ 

Let
$$\tau : X \to X$$
be the involution induced by the involution
$$(x,x') \mapsto (x + \tau_3,x' + \tau'_3)$$
on $E \times F$.
We have
$$\tau(E_i) = E_{t(i)},  \ \ \tau(F_i) = F_{t(i)}, \ \ 
\text{hence }\tau(C_{ij}) = C_{t(i)t(j)},$$
where $t:\Set{0,1,2,3} \cto$ is the involution defined by
$$t(0) = 3, \ \ t(1) = 2, \ \ t(2) = 1, \ \ t(3) = 0.$$
Let us also notice that, as $\tau$ is a symplectic involution of the K3 surface $X$ (that is, $\tau^{\ast}|_{H^0(X, \Omega^2_X)} = \id$), the fixed point set of $\tau$ is made of exactly eight points (see e.g., \cite[Corollary 15.1.5]{Hu16}).
In particular, 
$\tau$ satisfies the assumptions of the following lemma.

\begin{lemma}\label{lem-translation}

Let $f$ be an automorphism of $X$ that preserves the fibration $\phi_{|D_1|}$, which descends to an automorphism of $B_1$ through $\phi_{|D_1|}$. Assume that $f(D_1)=D_1$, $f(D_1')=D_1'$, and $f$ acts freely on these two divisors. 
Assume moreover that the fixed locus of $f$ is finite and non-empty.
Then $f\in \mathrm{MW}(\phi_{|D_1|})$.
\end{lemma}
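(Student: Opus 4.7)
The plan is to show first that $f$ induces the identity on $B_1$, so that $f\in\Aut(X/B_1)$, and second that $f$ restricts to a translation on the generic fiber of $\phi_{|D_1|}$; together these two facts are exactly what it means for $f$ to lie in $\mathrm{MW}(\phi_{|D_1|})$.

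For the first step, let $\bar f\in\Aut(B_1)$ be the automorphism of $B_1\simeq\mathbb{P}^1$ induced by $f$, and let $p_1,\,p_1'\in B_1$ denote the two distinct base points whose fibers are $D_1$ and $D_1'$. The hypotheses $f(D_1)=D_1$ and $f(D_1')=D_1'$ force $\bar f$ to fix both $p_1$ and $p_1'$. If $\bar f\ne\id_{B_1}$, then $\bar f$ has \emph{exactly} these two fixed points on $\mathbb{P}^1$, whence
\[\Fix(f)\subset\phi_{|D_1|}^{-1}(\{p_1,p_1'\})=D_1\cup D_1'.\]
The freeness of the $f$-action on $D_1\cup D_1'$ would then force $\Fix(f)=\varnothing$, contradicting the non-emptiness of $\Fix(f)$. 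Hence $\bar f=\id_{B_1}$ and $f\in\Aut(X/B_1)$.

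For the second step, I restrict $f$ to the generic fiber $F_{1,\eta}$, viewed as an elliptic curve over $\bk(B_1)$ with origin $O_\eta:=F_{1,\eta}\cap C_{21}$. Any automorphism of $F_{1,\eta}$ over $\bk(B_1)$ has a unique decomposition as $t_c\circ\sigma$, with $\sigma\in\Aut(F_{1,\eta},O_\eta)$ fixing the origin and $c:=f(O_\eta)\in F_{1,\eta}(\bk(B_1))$. The membership $f\in\mathrm{MW}(\phi_{|D_1|})$ holds if and only if $\sigma=\id$, since two elements of $\Aut(X/B_1)$ that agree on $F_{1,\eta}$ must agree on all of $X$. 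Suppose, for a contradiction, that $\sigma\ne\id$. Then $\sigma-\id\in\mathrm{End}(F_{1,\eta})$ is a non-zero endomorphism fixing the origin, hence an isogeny, which is surjective with finite kernel. The fixed locus of $f|_{F_{1,\eta}}$ equals $(\sigma-\id)^{-1}(-c)$, a non-empty $0$-dimensional subscheme of $F_{1,\eta}$, whose Zariski closure in $X$ is then a $1$-dimensional subvariety contained in $\Fix(f)$, contradicting the finiteness hypothesis. Therefore $\sigma=\id$, so $f\in\mathrm{MW}(\phi_{|D_1|})$.

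I expect the crux of the argument to be the second step: while the first step uses only the freeness assumption and the classical fact that a non-identity automorphism of $\mathbb{P}^1$ has at most two fixed points, the second step genuinely exploits the finiteness of $\Fix(f)$ together with the structure of automorphisms of elliptic curves to force the ``horizontal'' component $\sigma$ to be trivial.
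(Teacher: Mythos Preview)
Your proof is correct and follows essentially the same two-step approach as the paper: first use the at-most-two-fixed-points property of $\Aut(\mathbb{P}^1)$ together with freeness on $D_1\cup D_1'$ and non-emptiness of $\Fix(f)$ to force $\bar f=\id_{B_1}$, then argue that a nontrivial linear part $\sigma$ on the generic fiber would produce a horizontal curve in $\Fix(f)$. Your second step is written out in more detail than the paper's (which simply notes that a discrete fixed locus forces the linear part to be trivial), but the content is the same.
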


\begin{proof}
By assumption, there is an automorphism $g\in \Aut(B_1) \simeq \mathrm{PGL}(2,{\bf k})$ such that $\phi_{[D_1|}\circ f = g\circ\phi_{|D_1|}$. Since $f$ acts freely on $D_1$ and $D_1'$, and since it admits one fixed point $p \in X$, we have $\phi_{|D_1|}(D_1),\phi_{|D_1|}(D'_1)\ne \phi_{|D_1|}(p)$. So $g$ fixes three distinct points, hence $g=\id_{\mathbb{P}^1}$. So $f\in\mathrm{Aut}(X/B_1)$.

Finally, since the fixed locus of $f|_{F_{1,\eta}}$ is discrete, the linear part of $f|_{F_{1,\eta}}$ is trivial, i.e., $f|_{F_{1,\eta}}$ is a translation.
\end{proof}

\begin{lemma}\label{lem-tau1}
The involution $\tau$ coincides with the translation by
	the section $C_{12} \in \MW(\Phi_{|D_1|})$.
	In particular, $C_{12}$ is $2$-torsion in $\MW(\Phi_{|D_1|})$
	and
	$$C_{12} + C_{03} = C_{30}$$
	in $\MW(\Phi_{|D_1|})$.
\end{lemma}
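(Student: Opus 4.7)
The plan is to deduce all three assertions by applying Lemma~\ref{lem-translation} to $\tau$. So I must verify that $\tau$ preserves the fibration $\Phi_{|D_1|}$, fixes both $D_1$ and $D_1'$ setwise, acts freely on each of them, and has a non-empty finite fixed locus. Once this is done, Lemma~\ref{lem-translation} places $\tau$ in $\MW(\Phi_{|D_1|})$, and the remaining claims will follow by identifying the corresponding section and using that $\tau$ is an involution.

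For the first four conditions, I will rely on the formulas $\tau(E_i) = E_{t(i)}$, $\tau(F_i) = F_{t(i)}$ and $\tau(C_{ij}) = C_{t(i)t(j)}$ with $t$ interchanging $\{0,3\}$ and $\{1,2\}$. A direct inspection of the eight components of $D_1$ shows that $\tau$ pairs them as $F_0 \leftrightarrow F_3$, $C_{10} \leftrightarrow C_{23}$, $E_1 \leftrightarrow E_2$, $C_{13} \leftrightarrow C_{20}$, and analogously for $D_1'$. In particular, $\tau(D_1)=D_1$ and $\tau(D_1')=D_1'$, so $\tau$ preserves the fibration $\Phi_{|D_1|}$ (since $D_1$ and $D_1'$ are fibers). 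Moreover, since no irreducible component of $D_1$ or $D_1'$ is stable under $\tau$, the restriction of $\tau$ to each of these divisors is fixed-point-free. Finally, as already noted just before the lemma, $\tau$ is a symplectic involution of a K3 surface, so its fixed locus consists of exactly eight points; in particular it is finite and non-empty.

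Applying Lemma~\ref{lem-translation} then gives $\tau \in \MW(\Phi_{|D_1|})$. To identify the corresponding section, I use that translation by $\sigma \in \MW(\Phi_{|D_1|})$ sends the zero section $C_{21}$ to $\sigma$; hence the section representing $\tau$ is $\tau(C_{21}) = C_{t(2)t(1)} = C_{12}$. The identity $\tau^2 = \id_X$ then forces $2\cdot C_{12} = 0$ in $\MW(\Phi_{|D_1|})$, so $C_{12}$ is $2$-torsion. For the last relation, I apply $\tau$, which is translation by $C_{12}$, to the section $C_{03}$, obtaining
\[
C_{03} + C_{12} \;=\; \tau(C_{03}) \;=\; C_{t(0)t(3)} \;=\; C_{30}
\]
in $\MW(\Phi_{|D_1|})$. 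No individual step looks like a serious obstacle: the proof is essentially a checklist verification of the hypotheses of Lemma~\ref{lem-translation} followed by tracking the action of $\tau$ on a single section. The one place where minor care is required is the free action on $D_1$ and $D_1'$, where one must be certain that $\tau$ swaps rather than preserves each of the eight components appearing in these cycles.
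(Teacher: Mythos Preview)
Your proof is correct and follows exactly the paper's approach: apply Lemma~\ref{lem-translation} to $\tau$ and then read off the section from $\tau(C_{21})=C_{12}$ and the relation from $\tau(C_{03})=C_{30}$. You simply spell out in more detail the verification of the hypotheses (preservation of $D_1$, $D_1'$, free action, finite nonempty fixed locus) that the paper records in the paragraph preceding Lemma~\ref{lem-translation}; the only small addendum worth making explicit is that the swapped components of $D_1$ (and of $D_1'$) are antipodal in the $I_8$ cycle and hence disjoint, so no fixed point can hide at a node.
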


\begin{proof} 

By Lemma \ref{lem-translation}, $\tau\in \MW(\Phi_{|D_1|})$. The remaining claims follow from $\tau(C_{21}) = C_{12}$ and $\tau(C_{03}) = C_{30}$.
\end{proof}

Let 
\begin{equation}\label{def:f}
f : X \to X
\end{equation}
be the translation by $C_{03}$. 
Since $f(C_{21})=C_{03}$ and since $f$ preserves $D_1$, we have $f(E_2)=F_3$. Hence, as a cyclic permutation of the 8-cycle made of the components of $D_1$, $f$ has order $4$. 
So $f^4$ stabilizes each component of $D_1$. 
For the affine coordinates on $E_2$ introduced in (\ref{eq33}), namely the one defined by 
\begin{equation}\label{eqn-xE}
x(E_2 \cap C_{20}) = 0, \ \ x(E_2 \cap C_{21}) = 1, \ \ x(E_2 \cap C_{22}) = s, \ \ x(E_2 \cap C_{23}) = \infty,    
\end{equation}
we have $f^4|_{E_2}(0)=0$ and $f^4|_{E_2}(\infty)=\infty$, so
\begin{equation}\label{eq42}
    f^4|_{E_2}(x) = r \cdot  x
\end{equation}
for some $r(s,t) \cnec r \in \bk^\times$.
This construction can be performed in family over the space of the parameters $(s,t)$, namely $(\bA^1_{\bk}\setminus\{0,1\})^2$. This yields that the scalar $r_{\bk}(s,t)$ is a rational function of $s,t$ defined over $\bk$. As this construction is compatible with extensions of the base field, it holds 
$r_{\C}|_{(\ol{\Q}\setminus\{0,1\})^2}=r_{\overline{\Q}}$, i.e., $r_{\C}$ is a rational function with coefficients in $\overline{\Q}$.

Before we continue, 
let us mentions the following lemma, 
which will be used several times.

\begin{lemma}\label{lem-symptrans}
Let $S$ be a K3 surface admitting an 
elliptic fibration $\Phi : S \to B$.
Let $\phi \in \Aut(X/B)$. If $\phi$ is symplectic, 
then $\phi$ is a translation by some element in $\MW(\Phi)$.
In particular, if $\phi \in \Aut(X/B)$ 
is a symplectic automorphism which fixes pointwisely a curve dominating $B$, 
then $\phi = \id_S$.
\end{lemma}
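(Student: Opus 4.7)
The plan is to restrict $\phi$ to a general smooth fiber and use the symplectic hypothesis to force that restriction to be a translation. Since $\MW(\Phi)$ is invoked in the statement, I tacitly fix a zero section $\sigma_0 : B \to S$ of $\Phi$. For a general $b \in B$, the restriction $\phi|_{F_b}$ is a biregular automorphism of the elliptic curve $(F_b, \sigma_0(b))$, hence decomposes as $\phi|_{F_b} = t_{a(b)} \circ \alpha_b$, where $t_{a(b)}$ is translation by $a(b) \in F_b$ and $\alpha_b \in \Aut_{\rm group}(F_b, \sigma_0(b))$.

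The main step is to show that symplecticity forces $\alpha_b = \id_{F_b}$. Locally around a general smooth fiber, the holomorphic $2$-form $\omega_S$ on $S$ decomposes as $\omega_S = \omega_b \wedge \Phi^* ds$, where $s$ is a local coordinate on $B$ at $b$ and $\omega_b$ is a non-vanishing holomorphic $1$-form on $F_b$. Since $\phi \in \Aut(S/B)$ acts as the identity on $B$, we have $\phi^*(\Phi^* ds) = \Phi^* ds$, so the condition $\phi^* \omega_S = \omega_S$ reduces to $(\phi|_{F_b})^* \omega_b = \omega_b$. The translation part $t_{a(b)}$ acts trivially on $H^0(F_b, \Omega^1_{F_b})$, hence the equation becomes $\alpha_b^* \omega_b = \omega_b$. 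Now the natural representation of $\Aut_{\rm group}(F_b, \sigma_0(b))$ on the one-dimensional space $H^0(F_b, \Omega^1_{F_b})$ is faithful with image in the roots of unity ($\{\pm 1\}$ generically, $\mu_4$ when $j(F_b) = 1728$, $\mu_6$ when $j(F_b) = 0$), so $\alpha_b = \id_{F_b}$ and $\phi|_{F_b}$ is a translation. The assignment $b \mapsto \phi(\sigma_0(b))$ then defines a regular section $s_\phi : B \to S$, i.e.\ an element of $\MW(\Phi)$, and $\phi$ agrees with translation by $s_\phi$ on a dense open set of $S$, hence everywhere, which proves the first claim.

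For the second statement, suppose $\phi$ pointwise fixes a curve $C \subset S$ dominating $B$. Then for every $b$ in a dense open $U \subset B$, the finite intersection $C \cap F_b$ is nonempty and consists of fixed points of $\phi|_{F_b}$. But a nontrivial translation on an elliptic curve has no fixed point, so $\phi|_{F_b} = \id_{F_b}$ for all $b \in U$; this forces $s_\phi = \sigma_0$ as sections and hence $\phi = \id_S$.

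The only mildly delicate point is that the symplectic argument must also cover elliptic curves with extra automorphisms, i.e.\ $j(F_b) \in \{0, 1728\}$; this is handled cleanly by the faithfulness of the action of $\Aut_{\rm group}(F_b, \sigma_0(b))$ on $H^0(\Omega^1_{F_b})$ recalled above. All remaining steps are essentially formal.
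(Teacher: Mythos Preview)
Your proof is correct and rests on the same underlying idea as the paper's: symplecticity of $\phi$ on $S$ forces $\phi|_{F_b}$ to preserve the holomorphic $1$-form on the fiber, hence to be a translation. The organization differs. The paper does a case split: if $\phi$ has no fixed point in a general fiber it is already a translation; if it has a fixed point $p$, the short exact sequence $0 \to T_{F,p} \to T_{S,p} \to (\Phi^*T_B)_p \to 0$ together with $\phi^*\omega_S = \omega_S$ and triviality of the action on $(\Phi^*T_B)_p$ forces the action on $T_{F,p}$ to be trivial, whence $\phi|_F = \id_F$. Your route is more explicit: you decompose $\phi|_{F_b} = t_{a(b)} \circ \alpha_b$ and kill $\alpha_b$ via the faithful action of $\Aut_{\rm group}(F_b)$ on $H^0(F_b,\Omega^1_{F_b})$, having first reduced $\phi^*\omega_S = \omega_S$ to $(\phi|_{F_b})^*\omega_b = \omega_b$ through the local splitting $\omega_S = \omega_b \wedge \Phi^* ds$. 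Your version has the minor advantage of avoiding the case split and making the role of the section $\sigma_0$ explicit; the paper's version is a touch leaner since it only needs the tangent space at a single fixed point rather than the global decomposition of the $2$-form. The second statement is handled identically in both.
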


\begin{proof}

If $\phi$ has no fixed point $p$ in a general
fiber $F$ of $\Phi$, then
$\phi$ is already a translation.

Suppose that $\phi$ has a fixed point $p$ in a general (smooth) 
fiber $F$ of $\Phi$.
Since $\phi_*$ preserves the short exact sequence
$$0 \to T_{F,p} \to T_{S,p} \to (\Phi^*T_B)_{p} \to 0$$
and $\phi_*$ acts trivially on $(\Phi^*T_B)_{p}$, the assumption that $\phi$ is symplectic implies that it also acts trivially on $T_{F,p}$.
As $F$ is an elliptic curve, the linear part of $\phi|_{F}$ is the identity, and $\phi|_F$ fixes the point $p$, so $\phi|_{F}=\id_F$. 
Thus $\phi = \id_S$, in particular $\phi$ is a translation. 

The second statement follows immediately from the first one.
\end{proof}

\ssec{Computing $r_{\bk}(s,s)$}

Assume that $E = F$
(and therefore identify $\tau'_k$ with $\tau_k$, but keep denoting the vertical $(-2)$-curves in the configuration of Figure \ref{fig1} by $E_i$ and the horizontal ones by $F_j$).
Let $\gs : X \to X$
be the automorphism on $X$ induced by the automorphism
$$(x,x') \mapsto (x' + \tau_2,x + \tau_1)$$
on $E \times E$. 
Since this morphism has no fixed point of $E \times E$, the induced automorphism $\gs : X \to X$ has no fixed point neither.
 Moreover, we have
 $$\gs(E_i) = F_{s(i)},  \ \ \gs(F_i) = E_{s'(i)}, \ \ 
 \text{hence }\gs(C_{ij}) = C_{s'(j)s(i)},$$
 where $s:\Set{0,1,2,3} \cto$ is defined by
 $$s(0) = 2, \ \ s(1) = 3, \ \ s(2) = 0, \ \ s(3) = 1.$$
 and $s':\Set{0,1,2,3} \cto$ is defined by
 $$s'(0) = 1, \ \ s'(1) = 0, \ \ s'(2) = 3, \ \ s'(3) = 2.$$
So $\gs(D_1)=D_1$. In particular, $\gs$ preserves the fibration $\phi_{|D_1|}$.  Let us show that its induced action on the base $B_1\simeq\mathbb{P}^1$ is non-trivial. Assume by contradiction that it is trivial. Then, since $\mathrm{Fix}(\gs)$ is empty, the action of $\gs$ on the generic fiber must be a translation, i.e., $\gs\in \mathrm{MW}(\phi_{|D_1|})$. But then $\gs$ must be symplectic, contradiction! So $\gs$ acts non-trivially on $B_1$.

Note that the action of $\gs$ in the group $\Z_8$ of permutations of the components of $D_1$ is the same as that of $h^{-1}$, where $h : X \to X$ is the translation by $C_{33}$ with respect to $C_{21}$. Moreover, note that $h\circ\gs$ fixes the point $F_3\cap C_{33}$.

\begin{lemma}\label{lem-tausigma}
We have $h_{|D_1}^4 = \id_{D_1}$, and $h\circ\gs$ fixes $F_3$ pointwisely.
\end{lemma}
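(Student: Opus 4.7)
The plan is to derive both assertions from the single global identity $(h \circ \gs)^2 = \id_X$ in $\mathrm{Aut}(X)$.

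\emph{Step 1: Establishing $(h\gs)^2 = \id_X$.} From the formulas $\gs(C_{ij}) = C_{s'(j)\,s(i)}$ one reads off $\gs(C_{33}) = C_{21}$ and $\gs(C_{21}) = C_{00}$. Since the base action of $\gs$ is an involution, the conjugate $\gs \, h \, \gs^{-1}$ lies in $\mathrm{Aut}(X/B_1)$ and is symplectic, hence is a translation in $\mathrm{MW}(\Phi_{|D_1|})$. Its translation class is determined by $\gs h \gs^{-1}(C_{21}) = \gs(h(C_{33})) = \gs(2 C_{33})$; using the affine Mordell--Weil action induced by $\gs$ (with $\gs \cdot C_{21} = C_{00}$) together with the identity $C_{33} = C_{00} + C_{12}$ from Lemma~\ref{lem-tau1}, this evaluates to $-C_{00} = C_{12} - C_{33}$ in $\mathrm{MW}$, so that
\[
\gs \, h \, \gs^{-1} \;=\; \tau \circ h^{-1}.
\]
Multiplying on the right by $\gs$ and using $\gs^2 = \tau$, $\tau^2 = \id_X$, and the fact that $\tau$ and $h$ commute as translations, this rearranges to $\gs h \gs = h^{-1}$, whence $(h\gs)^2 = h(\gs h \gs) = h \cdot h^{-1} = \id_X$, and symmetrically $(\gs h)^2 = \id_X$.

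\emph{Step 2: Component analysis.} Since $h$ shifts the components of the $I_8$-cycle $D_1$ by $6$ and $\gs$ shifts them by $2$ modulo $8$, both $h\gs$ and $\gs h$ preserve each component $X_k \simeq \mathbb{P}^1$ of $D_1$ individually, and in particular fix the two nodes $N_k^\pm$ of $X_k$ shared with its neighbours. Combined with Step~1, each of $(h\gs)|_{X_k}$ and $(\gs h)|_{X_k}$ is an involution of $\mathbb{P}^1$ fixing two prescribed points, and is therefore either the identity or the reflection $v \mapsto -v$ in a coordinate placing the nodes at $0$ and $\infty$.

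\emph{Step 3: Conclusion.} Because $\tau$ and $h$ commute, $(h\gs)(\gs h) = h \gs^2 h = h \tau h = h^2 \tau$, and therefore
\[
h^4 \;=\; (h^2 \tau)^2 \;=\; \bigl((h\gs)(\gs h)\bigr)^2.
\]
Restricted to any $X_k$, the two factors are each $\pm \id_{X_k}$ by Step~2, so their product is $\pm \id_{X_k}$ and its square is the identity; thus $h^4|_{X_k} = \id$ for every $k$, giving the first assertion $h^4|_{D_1} = \id_{D_1}$. For the second assertion, $(h\gs)|_{F_3}$ is either $\id$ or $v \mapsto -v$, and the latter option fixes only the two nodes; but by the remark preceding the lemma, $h\gs$ also fixes the smooth point $F_3 \cap C_{33}$ (smooth because $C_{33}$ is a section of $\Phi_{|D_1|}$, not a component of $D_1$), which rules out the reflection and forces $(h\gs)|_{F_3} = \id_{F_3}$.

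The main obstacle is the Mordell--Weil computation in Step~1 producing $(h\gs)^2 = \id_X$; once this global involution relation is in hand, the rest of the argument is formal bookkeeping for involutions acting on a cycle of rational curves.
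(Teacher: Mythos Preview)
Your overall architecture is sound and, once patched, yields a genuinely different proof from the paper's. The paper never proves the global identity $(h\gs)^2=\id_X$; it only shows $(h\gs)^2|_{D_1}=\id$ by first observing that $h\gs$ fixes three points of $F_3$ (the two nodes and $F_3\cap C_{33}$), hence fixes $F_3$ pointwise, and then propagates this along the $I_8$--cycle using the antisymplectic constraint on the tangent action at each node. Your route through the Mordell--Weil group is more algebraic and in fact proves the stronger global statement, from which both assertions drop out formally. That is a nice gain.

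However, Step~1 as written has a real gap. You compute $\gs h\gs^{-1}(C_{21})=\gs(2C_{33})$ and then evaluate this ``using the affine Mordell--Weil action induced by $\gs$''. If by this you mean $\gs(s)=-s+C_{00}$ on sections, that formula is false: for instance $\gs(C_{33})=C_{21}=0$, whereas $-C_{33}+C_{00}=C_{12}\neq 0$. What is true is that $\gs$ acts on sections by $s\mapsto \phi(s)+C_{00}$ for a group homomorphism $\phi$ (because $\gs t_s\gs^{-1}=t_{\phi(s)}$ is a symplectic element of $\Aut(X/B_1)$, hence a translation), and $\phi$ agrees with $-\id$ only modulo the $2$--torsion. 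Your answer $\gs(2C_{33})=-C_{00}$ happens to be correct because $2C_{33}$ has trivial torsion part, but the justification you give does not establish it. The cleanest repair is to avoid the unknown section $2C_{33}$ altogether and evaluate the translation $\gs h\gs^{-1}$ at a section whose full orbit is visible among the $C_{ij}$: for example
\[
\gs h\gs^{-1}(C_{00})=\gs\bigl(h(C_{21})\bigr)=\gs(C_{33})=C_{21}=0,
\]
so the translation vector is $-C_{00}$; then $\gs h\gs = \gs h\gs^{-1}\cdot\gs^2 = t_{-C_{00}}\circ\tau = t_{-C_{00}+C_{12}} = t_{-C_{33}} = h^{-1}$, giving $(h\gs)^2=\id_X$ as you want. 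With this correction Steps~2 and~3 go through unchanged.
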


\begin{proof} 

First we note that $(h \circ \gs)^2_{|D_1}$ is trivial.
Indeed, note that $(h \circ \gs)^2$ is symplectic, and that it preserves the base
(because $h$ preserves the base and $\gs$ acts as an involution on the base).
Hence, by Lemma~\ref{lem-symptrans} $(h \circ \gs)^2$ is a translation.
In particular, by \cite[Theorem 9.1]{Ko63}, \cite[Paragraph 11.2.5, Corollary 11.2.4(ii)]{Hu16}, the restriction $(h \circ \gs)^2_{|(D_{1,\sm})}$ acts as an element of $\G_{m, \bk} \times {\mathbb Z}/8{\mathbb Z}$ on $D_{1,\sm}$.
Moreover, by construction, $h \circ \gs$
preserves $F_3\simeq\mathbb{P}^1$ and fixes three points of it (namely $F_3\cap C_{13}$, $F_3\cap C_{23}$, $F_3\cap C_{33}$), so $F_3\subset\mathrm{Fix}(h\circ \gs)$. 
Hence $(h \circ \gs)^2_{|D_{1,\sm}}$ is trivial.

Also, $h \circ \gs$ fixes the singular locus of $D_1$.
Assume by contradiction that a singular point $p \in D_1$ is an isolated fixed point. Then, as $h \circ \gs$ is an involution on $D_1$, the tangent map of $h \circ \gs$ at $p$ is $-\id$,
which contradicts the fact that $h \circ \gs$ is antisymplectic.
Moreover, $h \circ \gs$ cannot fix pointwisely both components of $D_1$ containing $p$. Therefore
$$F_0, E_1, E_2, F_3 \subset \Fix(h \circ \gs).$$
It follows that
$$\tau(E_2\cap C_{21})
= E_1 \cap C_{12}
= h \circ \gs (E_1 \cap C_{12})
= h(F_3 \cap C_{33})
= h^2(E_2\cap C_{21}).$$
Hence $(\tau^{-1}\circ h^2)_{|D_{1,\sm}}$ is a translation of the $\G_{m, \bk} \times {\mathbb Z}/8{\mathbb Z}$-torsor $D_{1,\sm}$ and it fixes a point, i.e., it is trivial.
As $\tau_{|D_1}$ is $2$-torsion, $h_{|D_1}$ is thus $4$-torsion.
\end{proof}

\begin{lemma}\label{lem-id4}
We have $r_{\bk}(s,s) = s^4$.
\end{lemma}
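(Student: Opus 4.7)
The plan is to prove the lemma by analyzing the action of $f$ on the smooth part $D_{1,\sm}$ of the reducible fiber $D_1$, which is of Kodaira type $I_8$ and carries a group scheme structure $D_{1,\sm} \simeq \G_m \times \Z/8\Z$ once we fix the identity at the point $e := P_{21} = C_{21} \cap D_1$.

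First I would label the eight components of $D_1$ cyclically as $L_0 := E_2, L_1 := C_{23}, L_2 := F_3, L_3 := C_{13}, L_4 := E_1, L_5 := C_{10}, L_6 := F_0, L_7 := C_{20}$, so that each $L_k \cap D_{1,\sm}$ is identified with $\G_m \times \{k\}$ via a coordinate $t_k$ vanishing at the node $L_{k-1} \cap L_k$ and with a pole at $L_k \cap L_{k+1}$. On $L_0 = E_2$ I choose $t_0 = x_{E_2}$, so that $t_0(e) = 1$. On $L_2 = F_3$ the two nodes of $D_1$ are $P'_{23}$ (at $x'_{F_3} = t = s$) and $P'_{13}$ (at $x'_{F_3} = 1$), so one may write $t_2 = c_2 (x'_{F_3} - s)/(x'_{F_3} - 1)$ for some scaling constant $c_2 \in \G_m$ determined by the chosen splitting of the short exact sequence $1 \to \G_m \to D_{1,\sm} \to \Z/8\Z \to 0$.

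Both Mordell--Weil translations $f = [C_{03}]$ and $h = [C_{33}]$ restrict to $D_{1,\sm}$ as translations. Since $C_{03}$ and $C_{33}$ meet $D_1$ at $P'_{03}$ and $P'_{33}$ respectively, both on $L_2 = F_3$, we have $f|_{D_{1,\sm}} = (a, 2)$ and $h|_{D_{1,\sm}} = (b, 2)$ where
\[ a = t_2(P'_{03}) = c_2 \cdot \frac{0 - s}{0 - 1} = c_2 s, \qquad b = t_2(P'_{33}) = c_2 \cdot \lim_{x' \to \infty} \frac{x' - s}{x' - 1} = c_2.  \]
In particular $a = bs$. Then $f^4|_{D_{1,\sm}} = (a^4, 0)$ is a translation inside the identity component, acting on $L_0 = E_2$ via the coordinate $t_0 = x_{E_2}$ as multiplication by $a^4$. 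Hence $r_{\bk}(s,s) = a^4 = b^4 s^4$. To conclude, Lemma~\ref{lem-tausigma} asserts $h|_{D_1}^4 = \id_{D_1}$, which forces $b^4 = 1$, and therefore $r_{\bk}(s,s) = s^4$.

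The only delicate point in this argument is the set-up of the coordinates $t_k$ compatibly with the group scheme structure on the $I_8$ fiber; however the final answer depends only on the ratio $a/b$ and the quantity $b^4$, both of which are manifestly independent of the specific choice of splitting, since any two such splittings differ by a character $\chi \colon \Z/8\Z \to \G_m$ and $\chi(2)^4 = 1$.
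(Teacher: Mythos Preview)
Your proof is correct and follows essentially the same strategy as the paper's: both work inside the group scheme $D_{1,\sm}\simeq\G_m\times\Z/8\Z$, both invoke Lemma~\ref{lem-tausigma} to get $h^4|_{D_1}=\id_{D_1}$ (i.e.\ $b^4=1$), and both conclude $r=s^4$ from $(fh^{-1})^4=f^4$ on $D_1$.

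The one genuine difference is how the relation $a=bs$ (equivalently, $fh^{-1}$ acts as multiplication by $s$) is obtained. The paper uses the \emph{second} assertion of Lemma~\ref{lem-tausigma}, namely that $h\circ\sigma$ fixes $F_3$ pointwise: applying this to $P'_{03}\in F_3$ and using $\sigma(P'_{03})=P_{22}$ yields $h(P_{22})=P'_{03}$, whence in the group $(s,0)+(b,2)=(a,2)$. You instead read off $a/b$ directly from the affine coordinate $x'$ on $F_3=L_2$, which is cleaner and only requires the first assertion of Lemma~\ref{lem-tausigma}. Your closing remark that the splitting ambiguity contributes a factor $\chi(2)^4=\chi(0)=1$ is exactly what makes $a^4$ (and $a/b$) well-defined; the orientation of $t_2$ is pinned down by your convention on $t_0$, since a generator of the $\Z/8\Z$-part carries the node pattern $(L_{k-1}\cap L_k,\,L_k\cap L_{k+1})$ forward cyclically.
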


\begin{proof}

All the translations considered in this proof are restricted to $D_{1,\sm}$. Since we proved earlier that $h\circ\gs$ fixes $F_3$ pointwise, we have $h(E_2\cap C_{22})=F_3\cap C_{03}$. However, under the $\G_{m,\bk}$-action on $D_{1,\sm}$, the translation by $E_2\cap C_{22}$ correponds to the multiplication by $s$. Hence, we have 
$$f\circ h^{-1}(F_3 \cap C_{33}) = F_3 \cap C_{03} = s \cdot (F_3 \cap C_{33}).$$
and thus $f\circ h^{-1}(z)=s\cdot z$ for all $z\in D_{1,\sm}$.
Since the translation $h$ is 4-torsion, and since any two translations commute, we obtain
$$f^4(z) = (f\circ h^{-1})^4(z) = s^4 \cdot z$$
for all $z \in D_{1,\sm}$. Hence $r = s^4$.
\end{proof}

\ssec{The transcendence of $r$}

Now assume that $\bk = \C$.

\begin{proposition}\label{pro-nocst}
The map $(s,t) \mapsto r_\C(s,t) \in \C^\times$ is not constant.
As a consequence, as long as $s,t \in \C$ are algebraically independent,
$r_\C(s,t)$ is transcendental.
\end{proposition}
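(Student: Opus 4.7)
The plan is straightforward given Lemma~\ref{lem-id4}: the first assertion follows from a direct specialization to the diagonal $\{s=t\}$, and the second is a standard argument that a non-constant rational function with $\ol{\Q}$-coefficients cannot take algebraic values at an algebraically independent pair of inputs without being identically constant.

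In more detail, for the non-constancy claim I would invoke Lemma~\ref{lem-id4}, which gives $r_\C(s,s) = s^4$ along the diagonal of the parameter space $(\A^1_\C \setminus \{0,1\})^2$. Since the specialization $s \mapsto s^4$ is visibly non-constant (e.g.\ $r_\C(-1,-1) = 1$ while $r_\C(2,2) = 16$), the rational function $r_\C(s,t)$ itself cannot be constant. Note that we use here only the existence of the family construction of $r_\C(s,t)$ as a rational function on $(\A^1_\C \setminus \{0,1\})^2$, already recorded after equation~\eqref{eq42}, to make sense of the diagonal restriction.

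For the transcendence consequence, I would use the fact (also recorded after equation~\eqref{eq42}) that $r_\C$ is a rational function with coefficients in $\ol{\Q}$. Suppose for contradiction that $s,t \in \C$ are algebraically independent over $\Q$ and that $\alpha := r_\C(s,t)$ is algebraic. Writing $r_\C = P/Q$ with coprime $P, Q \in \ol{\Q}[u,v]$, the relation $P(s,t) - \alpha Q(s,t) = 0$, combined with the algebraic independence of $s,t$ over $\ol{\Q}$ (which follows from their algebraic independence over $\Q$ since $\ol{\Q}/\Q$ is algebraic), forces $P - \alpha Q \equiv 0$ as a polynomial in $\ol{\Q}[u,v]$. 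Hence $r_\C \equiv \alpha$ is constant, contradicting the first part.

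The genuinely hard work was done already in Lemma~\ref{lem-id4}, where the extra symmetry $\gs$ available precisely when $E = F$ made the diagonal specialization explicitly computable as $s^4$. Once that is granted, the proposition reduces to a one-line specialization argument together with an elementary transcendence consideration, so I do not expect any real obstacle at this stage.
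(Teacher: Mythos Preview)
Your proposal is correct and follows essentially the same approach as the paper: invoke Lemma~\ref{lem-id4} for non-constancy, then use that $r_\C$ has $\ol{\Q}$-coefficients to deduce transcendence at algebraically independent inputs. Your version simply spells out the transcendence step in more detail than the paper's one-line contrapositive.
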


\begin{proof}
The first statement follows from 
Lemma~\ref{lem-id4}.
Since $r_\C$ is defined over $\ol{\Q}$,
$r_\C(s,t)$ is a non-constant rational function in $s$ and $t$ with coefficients
in $\ol{\Q}$. 
So $r_\C(s,t) \in \ol{\Q}$
implies that $s$ and $t$ are algebraically dependent.
\end{proof}

\ssec{Second Jacobian fibration}\label{subsec:second_jac}

We assume $\bk = \C$ for simplicity. 

    Let $D_2$ be the divisor on $X$ defined by 
	$$D_2 := F_0 + C_{30} + E_3 + C_{31} + F_1 + C_{21} + E_2 + C_{20};$$
	see Figure \ref{fig3}.
	By the same argument as in Subsection~\ref{subsec:first_jac}, $|D_2|$ defines an elliptic fibration
	$${\Phi}_{|D_2|} : X \to B_2 := {\mathbb P}^1,$$
	containing both $D_2$ and 
	$$D'_2 := E_0 + C_{03} + F_3 + C_{13} + E_1 + C_{12} + F_2 + C_{02}$$
	as fibers.
	Note that ${\Phi}_{|D_2|}$ has sections $C_{23}$ and $C_{32}$.
	
	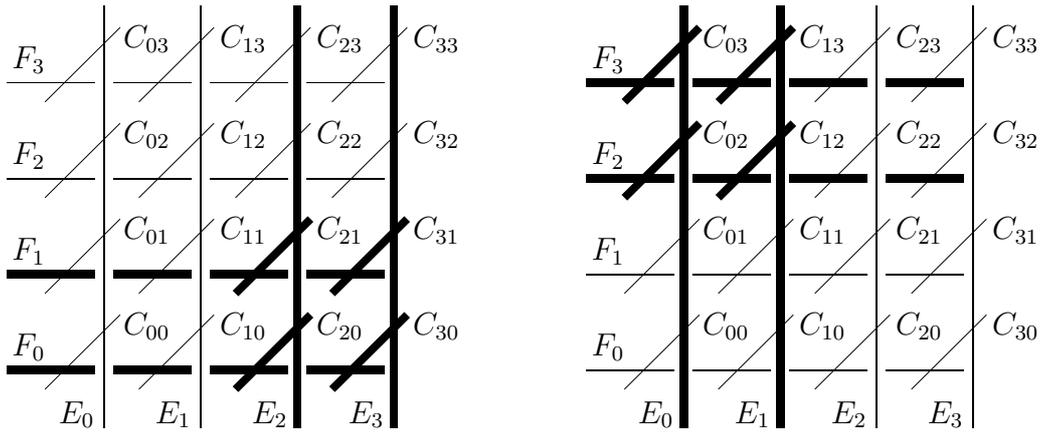
\begin{figure}
		\unitlength 0.1in
		\begin{picture}(25.000000,24.000000)(-1.000000,-23.500000)
		
			\put(-10.500000, -22.000000){\makebox(0,0)[rb]{$E_0$}}%
			\put(-5.500000, -22.000000){\makebox(0,0)[rb]{$E_1$}}%
			\put(-0.500000, -22.000000){\makebox(0,0)[rb]{$E_2$}}%
			\put(4.500000, -22.000000){\makebox(0,0)[rb]{$E_3$}}%
			\put(-14.750000, -18.500000){\makebox(0,0)[lb]{$F_0$}}%
			\put(-14.750000, -13.500000){\makebox(0,0)[lb]{$F_1$}}%
			\put(-14.750000, -8.500000){\makebox(0,0)[lb]{$F_2$}}%
			\put(-14.750000, -3.500000){\makebox(0,0)[lb]{$F_3$}}%
			\put(-9.000000, -16.000000){\makebox(0,0)[lt]{$C_{00}$}}%
			\put(-9.000000, -11.000000){\makebox(0,0)[lt]{$C_{01}$}}%
			\put(-9.000000, -6.000000){\makebox(0,0)[lt]{$C_{02}$}}%
			\put(-9.000000, -1.000000){\makebox(0,0)[lt]{$C_{03}$}}%
			\put(-4.000000, -16.000000){\makebox(0,0)[lt]{$C_{10}$}}%
			\put(-4.000000, -11.000000){\makebox(0,0)[lt]{$C_{11}$}}%
			\put(-4.000000, -6.000000){\makebox(0,0)[lt]{$C_{12}$}}%
			\put(-4.000000, -1.000000){\makebox(0,0)[lt]{$C_{13}$}}%
			\put(1.000000, -16.000000){\makebox(0,0)[lt]{$C_{20}$}}%
			\put(1.000000, -11.000000){\makebox(0,0)[lt]{$C_{21}$}}%
			\put(1.000000, -6.000000){\makebox(0,0)[lt]{$C_{22}$}}%
			\put(1.000000, -1.000000){\makebox(0,0)[lt]{$C_{23}$}}%
			\put(6.000000, -16.000000){\makebox(0,0)[lt]{$C_{30}$}}%
			\put(6.000000, -11.000000){\makebox(0,0)[lt]{$C_{31}$}}%
			\put(6.000000, -6.000000){\makebox(0,0)[lt]{$C_{32}$}}%
			\put(6.000000, -1.000000){\makebox(0,0)[lt]{$C_{33}$}}%
			
			\linethickness{0.1mm}
			\put(-10.000000, -22.000000){\line(0,1){22}}
                          \linethickness{0.1mm}
			 \put(-5.000000, -22.000000){\line(0,1){22}}
                          \linethickness{1mm}
			 \put(0.000000, -22.000000){\line(0,1){22}}
			 \linethickness{1mm}
			 \put(5.000000, -22.000000){\line(0,1){22}}
                          \linethickness{1mm}
			 \put(-15.000000, -19.000000){\line(1,0){4.5}}
			\linethickness{1mm}
			\put(-9.500000, -19.000000){\line(1,0){4}}
                         \linethickness{1mm}
			\put(-4.500000, -19.000000){\line(1,0){4}}
                         \linethickness{1mm}
			\put(0.500000, -19.000000){\line(1,0){4}}
			 \linethickness{1mm}
			 \put(-15.000000, -14.000000){\line(1,0){4.5}}
			 \linethickness{1mm}
			 \put(-9.500000, -14.000000){\line(1,0){4}}
			 \linethickness{1mm}
			 \put(-4.500000, -14.000000){\line(1,0){4}}
			\linethickness{1mm}
			 \put(0.500000, -14.000000){\line(1,0){4}}
			 \linethickness{0.1mm}
			 \put(-15.000000, -9.000000){\line(1,0){4.5}}
			 \linethickness{0.1mm}
			 \put(-9.500000, -9.000000){\line(1,0){4}}
			 \linethickness{0.1mm}
			 \put(-4.500000, -9.000000){\line(1,0){4}}
			 \linethickness{0.1mm}
			 \put(0.500000, -9.000000){\line(1,0){4}}
			 \linethickness{0.1mm}
			 \put(-15.000000, -4.000000){\line(1,0){4.5}}
			 \linethickness{0.1mm}
			 \put(-9.500000, -4.000000){\line(1,0){4}}
			 \linethickness{0.1mm}
			 \put(-4.500000, -4.000000){\line(1,0){4}}
			 \linethickness{0.1mm}
			 \put(0.500000, -4.000000){\line(1,0){4}}
			                
                \put(19.500000, -22.000000){\makebox(0,0)[rb]{$E_0$}}%
			\put(24.500000, -22.000000){\makebox(0,0)[rb]{$E_1$}}%
			\put(29.500000, -22.000000){\makebox(0,0)[rb]{$E_2$}}%
			\put(34.500000, -22.000000){\makebox(0,0)[rb]{$E_3$}}%
			\put(15.250000, -18.500000){\makebox(0,0)[lb]{$F_0$}}%
			\put(15.250000, -13.500000){\makebox(0,0)[lb]{$F_1$}}%
			\put(15.250000, -8.500000){\makebox(0,0)[lb]{$F_2$}}%
			\put(15.250000, -3.500000){\makebox(0,0)[lb]{$F_3$}}%
			\put(21.000000, -16.000000){\makebox(0,0)[lt]{$C_{00}$}}%
			\put(21.000000, -11.000000){\makebox(0,0)[lt]{$C_{01}$}}%
			\put(21.000000, -6.000000){\makebox(0,0)[lt]{$C_{02}$}}%
			\put(21.000000, -1.000000){\makebox(0,0)[lt]{$C_{03}$}}%
			\put(26.000000, -16.000000){\makebox(0,0)[lt]{$C_{10}$}}%
			\put(26.000000, -11.000000){\makebox(0,0)[lt]{$C_{11}$}}%
			\put(26.000000, -6.000000){\makebox(0,0)[lt]{$C_{12}$}}%
			\put(26.000000, -1.000000){\makebox(0,0)[lt]{$C_{13}$}}%
			\put(31.000000, -16.000000){\makebox(0,0)[lt]{$C_{20}$}}%
			\put(31.000000, -11.000000){\makebox(0,0)[lt]{$C_{21}$}}%
			\put(31.000000, -6.000000){\makebox(0,0)[lt]{$C_{22}$}}%
			\put(31.000000, -1.000000){\makebox(0,0)[lt]{$C_{23}$}}%
			\put(36.000000, -16.000000){\makebox(0,0)[lt]{$C_{30}$}}%
			\put(36.000000, -11.000000){\makebox(0,0)[lt]{$C_{31}$}}%
			\put(36.000000, -6.000000){\makebox(0,0)[lt]{$C_{32}$}}%
			\put(36.000000, -1.000000){\makebox(0,0)[lt]{$C_{33}$}}%
			
			\linethickness{1mm}
			\put(20.000000, -22.000000){\line(0,1){22}}
                          \linethickness{1mm}
			 \put(25.000000, -22.000000){\line(0,1){22}}
                          \linethickness{0.1mm}
			 \put(30.000000, -22.000000){\line(0,1){22}}
			 \linethickness{0.1mm}
			 \put(35.000000, -22.000000){\line(0,1){22}}
                          \linethickness{0.1mm}
			 \put(15.000000, -19.000000){\line(1,0){4.5}}
			\linethickness{0.1mm}
			\put(20.500000, -19.000000){\line(1,0){4}}
                         \linethickness{0.1mm}
			\put(25.500000, -19.000000){\line(1,0){4}}
                         \linethickness{0.1mm}
			\put(30.500000, -19.000000){\line(1,0){4}}
			 \linethickness{0.1mm}
			 \put(15.000000, -14.000000){\line(1,0){4.5}}
			 \linethickness{0.1mm}
			 \put(20.500000, -14.000000){\line(1,0){4}}
			 \linethickness{0.1mm}
			 \put(25.500000, -14.000000){\line(1,0){4}}
			\linethickness{0.1mm}
			 \put(30.500000, -14.000000){\line(1,0){4}}
			 \linethickness{1mm}
			 \put(15.000000, -9.000000){\line(1,0){4.5}}
			 \linethickness{1mm}
			 \put(20.500000, -9.000000){\line(1,0){4}}
			 \linethickness{1mm}
			 \put(25.500000, -9.000000){\line(1,0){4}}
			 \linethickness{1mm}
			 \put(30.500000, -9.000000){\line(1,0){4}}
			 \linethickness{1mm}
			 \put(15.000000, -4.000000){\line(1,0){4.5}}
			 \linethickness{1mm}
			 \put(20.500000, -4.000000){\line(1,0){4}}
			 \linethickness{1mm}
			 \put(25.500000, -4.000000){\line(1,0){4}}
			 \linethickness{1mm}
			 \put(30.500000, -4.000000){\line(1,0){4}}
			                
		\end{picture}%
		\caption{Divisors $D_2$ and $D'_2$}
		\label{fig3}
	\end{figure}

    We choose $C_{23}$ as the zero section of
	$\Phi_{|D_2|}$, turning it
    into  a Jacobian fibration.	
	Let $F_{2, \eta}$ be the generic fiber of $\Phi_{|D_2|}$. Then $(F_{2, \eta}, F_{2,\eta} \cap C_{23})$ is an elliptic curve with the origin $F_{2, \eta} \cap C_{23}$ over $\bk(B_2)$.

\begin{lemma}\label{lem-tau}
	The section $C_{32}$ is a $2$-torsion element in $\MW(\Phi_{|D_2|})$.
\end{lemma}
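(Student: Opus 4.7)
The plan is to mirror the argument of Lemma~\ref{lem-tau1}: I will produce an involution $\tau' \in \Aut(X)$ that sends the zero section $C_{23}$ of $\Phi_{|D_2|}$ to $C_{32}$, preserves both $D_2$ and $D_2'$, acts freely on these two divisors, and has finite, non-empty fixed locus. By Lemma~\ref{lem-translation}, this will force $\tau' \in \MW(\Phi_{|D_2|})$, and since $\tau'$ is then the translation by $C_{32}$ (because the zero section $C_{23}$ is sent to $C_{32}$) and is itself an involution, $C_{32}$ will necessarily be a $2$-torsion element.

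The involution I take is the one induced on $X$ by the translation
$$(x,x') \mapsto (x+\tau_1,\,x'+\tau_1')$$
on $E\times F$. This map on $E\times F$ is fixed-point-free, commutes with $-1_{E\times F}$, and is of order $2$, so it descends to the Kummer quotient and lifts to an involution $\tau' \in \Aut(X)$. Let $\sigma : \{0,1,2,3\} \cto$ denote the permutation determined by $\tau_i + \tau_1 = \tau_{\sigma(i)}$; one finds $\sigma = (0\,1)(2\,3)$, and therefore $\tau'(E_i) = E_{\sigma(i)}$, $\tau'(F_j) = F_{\sigma(j)}$, and $\tau'(C_{ij}) = C_{\sigma(i)\sigma(j)}$. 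In particular $\tau'(C_{23}) = C_{32}$, as required.

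It then remains to verify the hypotheses of Lemma~\ref{lem-translation}. Direct inspection of the formulas above shows that $\tau'$ permutes the eight components of $D_2$ (respectively $D_2'$) as a product of four transpositions --- a ``rotation by four positions'' of the underlying cyclic configuration --- so that no component, and consequently no intersection point, is fixed by $\tau'$ on either divisor. Finally, since $\tau'$ is a symplectic involution on the K3 surface $X$ (being induced by a translation on $E\times F$, which preserves the holomorphic $2$-form), its fixed locus consists of exactly eight isolated points by~\cite[Corollary 15.1.5]{Hu16}; in particular, it is finite and non-empty. The only delicate point is to pick the correct $2$-torsion translation on $E\times F$ --- note that the involution $\tau$ used in Lemma~\ref{lem-tau1}, induced by $(\tau_3,\tau_3')$, does \emph{not} preserve $D_2$ --- and then to track the resulting permutation of the components of $D_2$ and $D_2'$ with care; everything else is formal.
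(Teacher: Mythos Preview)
Your proposal is correct and is essentially identical to the paper's proof: the paper also introduces the symplectic involution $\nu$ induced by the translation $(x,x')\mapsto(x+\tau_1,x'+\tau_1')$ on $E\times F$, observes that it induces the permutation $t'=(0\,1)(2\,3)$ on indices (so $\nu(C_{23})=C_{32}$ and $\nu$ preserves $D_2,D_2'$), and then applies Lemma~\ref{lem-translation} with $D_1,D_1'$ replaced by $D_2,D_2'$.
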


\begin{proof} 

The proof is essentially the same as the argument of Lemma ~\ref{lem-tau1}.
Instead of
$\tau$, we consider the symplectic involution $\nu$
induced by
$$(x,x') \mapsto (x + \tau_1,x' + \tau'_1)$$
on $E \times F$, so that
$$\nu(E_i) = E_{t'(i)},  \ \ \nu(F_i) = F_{t'(i)}, \ \ 
\nu(C_{ij}) = C_{t'(i)t'(j)},$$
where $t':\Set{0,1,2,3} \cto$ is the involution defined by
$$t'(0) = 1, \ \ t'(1) = 0, \ \ t'(2) = 3, \ \ t'(3) = 2.$$
We also replace $D_1$ by $D_2$, and $D_1'$ by $D_2'$ 
in the proof.
\end{proof}

\begin{proof}[Another proof that $C_{32}$ is torsion.]

Later in this paper, we only need the weaker statement that
$C_{32}$ is torsion.
Here we provide another proof of it,
using Shioda's height pairing.

		We compute the Shioda's height paring value $\langle C_{32}, C_{32} \rangle$ of the section $C_{32}$ of $\Phi_{|D_2|}$ with respect to the zero section $C_{23}$ by using the formula \cite[Theorem 8.6]{Sh90}. To use this, first note that the reducible fibers of $\Phi_{|D_2|}$ are $D_2$ and $D_2'$,
		as $\Phi_{|D_2|}$ is of Type ${\mathcal I}_1$ in \cite[Table 2 in Page 662]{Og89}.
		The zero section $C_{23}$ meets $D_2$ and $D_2'$ at $E_2$ and $F_3$ respectively, while the section $C_{32}$ meets $D_2$ and $D_2'$ at $E_3$ and $F_2$ respectively. 
		Thus, by \cite[Theorem 8.6, Table 8.16]{Sh90}, we compute that
		$$\langle C_{32}, C_{32} \rangle = 2\cdot2 + 2 \cdot 0 - 2 \cdot \frac{4(8-4)}{8} = 0.$$
		Thus $C_{32}$ corresponds to a torsion element of ${\rm MW} (\Phi_{|D_2|})$ by \cite[Equation 8.10]{Sh90}. 
\end{proof}

	Consider the inversion $\psi$ of the elliptic curve $(F_{2, \eta}, F_{2,\eta} \cap C_{23})$. Then 
	\begin{equation}\label{def:psi}
	\psi \in {\rm Bir} (X/B_2) = {\rm Aut}(X/B_2) 
	\subset {\rm Aut}(X).
	\end{equation}
	As $\psi$ fixes the section $C_{23}$ pointwisely and is not trivial,
	it is not symplectic
	by Lemma~\ref{lem-symptrans}.
	Since $\psi^2 = \id_X$, we have
	$\psi^*\omega_X = -\omega_X$,
	where $\go_X$ is a holomorphic symplectic form on $X$.

    Set
	$$\psi_n := f^{-4n} \circ \psi \circ f^{4n} \in {\rm Aut}(X).$$
	
    	\begin{lemma}
	$\psi_n$ are involutions and $\psi_n \in {\rm Ine} (X,C_{23})$.
	\end{lemma}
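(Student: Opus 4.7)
The plan is to prove both assertions by direct calculation, using only (i) that $\psi$ is an involution fixing $C_{23}$ pointwise (as the inversion of the Jacobian fibration $\Phi_{|D_2|}$ whose zero section is $C_{23}$), and (ii) that $f^4$ stabilizes each component of $D_1$, in particular $C_{23}$, as already recorded in the paper just after \eqref{def:f}.

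First, the involutivity of $\psi_n$ is immediate from the conjugation structure:
\[
\psi_n^2 \;=\; f^{-4n} \circ \psi \circ f^{4n} \circ f^{-4n} \circ \psi \circ f^{4n} \;=\; f^{-4n} \circ \psi^2 \circ f^{4n} \;=\; \id_X,
\]
since $\psi^2 = \id_X$. Second, to prove $\psi_n \in \mathrm{Ine}(X, C_{23})$, i.e., that $\psi_n$ fixes $C_{23}$ pointwise, I would fix $x \in C_{23}$ and compute $\psi_n(x)$. Because $C_{23}$ is one of the eight components of the fiber $D_1$ of $\Phi_{|D_1|}$ and because $f^4$ stabilizes each of these components, we have $f^{4n}(x) \in C_{23}$. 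Now, since $\psi$ is the inversion of the Jacobian fibration $\Phi_{|D_2|}$ and $C_{23}$ is its chosen zero section, $\psi$ fixes every point of $C_{23}$. Hence $\psi(f^{4n}(x)) = f^{4n}(x)$, and therefore
\[
\psi_n(x) \;=\; f^{-4n}\bigl(\psi(f^{4n}(x))\bigr) \;=\; f^{-4n}(f^{4n}(x)) \;=\; x.
\]

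There is no real obstacle here: both assertions follow from properties of $\psi$, $f$, and $C_{23}$ that are already established in the excerpt. The only nontrivial input is the preservation of $C_{23}$ by $f^4$, which is precisely the content of the discussion around \eqref{def:f} combined with the observation that $C_{23}$ is a component of $D_1$.
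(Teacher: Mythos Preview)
Your proof is correct and follows essentially the same approach as the paper: both use that conjugates of an involution are involutions, that $f^4$ stabilizes the component $C_{23}$ of $D_1$ (so $f^{4n}\in\Dec(X,C_{23})$), and that $\psi\in\Ine(X,C_{23})$ by construction. The only difference is cosmetic---you verify $\psi_n\in\Ine(X,C_{23})$ pointwise, whereas the paper phrases the same computation in terms of the groups $\Dec(X,C_{23})$ and $\Ine(X,C_{23})$.
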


\begin{proof}
    Since $\psi$ is of order two by definition, so are $\psi_n$. 	
	Recall that each component of $D_1$ (in particular, $C_{23}$) is stable under $f^{4}$, 
	we have
	$f^{4n}\in {\rm Dec} (X, C_{23})$ for all $n$. 
	Combining this with $\psi \in {\rm Ine}(X, C_{23})$ 
	by the definition of $\psi$, we deduce that 
		$$\psi_n = f^{-4n} \circ \psi \circ f^{4n} \in 
		{\rm Ine}(X, C_{23}).$$
	This completes the proof. 
\end{proof}
	
	\begin{proposition}\label{prop42} 
		 Assume that the parameters $s,t$ are such that
		 $r$ is not a root of unity (e.g., $s,t$ are algebraically independent). Then
			$\psi_n \not= \psi_m$ whenever $n \not= m$. 
	\end{proposition}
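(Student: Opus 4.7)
The plan is to distinguish the involutions $\psi_n$ via their restrictions to the component $E_2$, computing everything in the affine coordinate $x$ introduced in~\eqref{eq33}.

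First, I would verify that each $\psi_n$ preserves $E_2$, so that the restriction $\psi_n|_{E_2}$ makes sense. Since $f^4$ stabilizes each component of $D_1$, in particular $E_2$, so does $f^{4n}$. For $\psi$ itself: $E_2$ appears as a component of the reducible fiber $D_2$ of $\Phi_{|D_2|}$, and it is the unique component met by the chosen zero section $C_{23}$, so $E_2$ is the "zero component" of the $I_8$-fiber $D_2$. The fiberwise inversion $\psi$ acts on the component group $\Z/8\Z$ of the smooth locus of $D_2$ by $k \mapsto -k$, so it fixes the zero component and swaps the two components adjacent to it, namely $C_{20}$ and $C_{21}$. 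In particular $\psi(E_2) = E_2$.

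Next I would pin down $\psi|_{E_2}$ as an explicit Möbius transformation of $E_2 \simeq \mathbb{P}^1$. Two properties determine it: first, $\psi$ fixes $C_{23}$ pointwise (being the inversion about this zero section), so it fixes the point $E_2 \cap C_{23}$, which is $x=\infty$ by~\eqref{eq33}; second, $\psi$ swaps $C_{20}$ and $C_{21}$, hence swaps the points $x = 0$ and $x = 1$ on $E_2$. The unique Möbius involution fixing $\infty$ and swapping $0$ and $1$ is $x \mapsto 1 - x$. Combining this with~\eqref{eq42}, which gives $f^{4n}|_{E_2}(x) = r^n x$, a direct computation yields
\[
\psi_n|_{E_2}(x) = r^{-n}\bigl(1 - r^n x\bigr) = r^{-n} - x.
\]
Therefore $\psi_n|_{E_2} = \psi_m|_{E_2}$ if and only if $r^{n-m} = 1$, which, under the hypothesis that $r$ is not a root of unity, forces $n = m$. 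The algebraic independence case is then handled by Proposition~\ref{pro-nocst}, which gives that $r$ is transcendental in that situation.

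The substantive step is the identification of $\psi|_{E_2}$ as $x \mapsto 1 - x$, which rests on understanding how the fiberwise inversion of a Jacobian elliptic fibration acts on the zero component of a reducible fiber of type $I_n$. Once this is in place, the distinction between different $\psi_n$ reduces to the trivial observation that the constants $r^{-n}$ are pairwise distinct whenever $r$ has infinite order.
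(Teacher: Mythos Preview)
Your proof is correct and follows essentially the same approach as the paper: restrict $\psi_n$ to $E_2$, compute $\psi_n|_{E_2}(x)=r^{-n}-x$ in the coordinate $x$ of~\eqref{eqn-xE}, and conclude that the $\psi_n$ are pairwise distinct because $r$ is not a root of unity. The only minor difference is in justifying why $\psi$ swaps $C_{20}$ and $C_{21}$: you invoke the action of the fiberwise inversion on the component group $\Z/8\Z$ of the $I_8$-fiber, whereas the paper argues more elementarily that $\psi$, being antisymplectic and fixing $C_{23}$ pointwise, has eigenvalues $1,-1$ at $E_2\cap C_{23}$, hence does not fix $E_2$ pointwise, hence has at most two fixed points on $E_2$, forcing the swap.
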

	
	\begin{proof} 
		
		Since $\psi(D_2) = D_2$ and $\psi|_{C_{23}}$ is the identity, and since $E_2$ is the only irreducible component of $D_2$ containing $D_2 \cap C_{23}$,
		we have $\psi(E_2) = E_2$. As $f^4(E_2) = E_2$, 
		$$\psi_n = f^{-4n} \circ \psi \circ f^{4n}\in {\rm Dec} (X, E_2).$$ 
		As $\psi$ is an antisymplectic involution which fixes $C_{23}$ pointwise, its linearization at the point $E_2\cap C_{23}$ has eigenvalues $1$ (in the direction corresponding to $C_{23}$) and $-1$; in particular, it cannot fix $E_2$ pointwise. Therefore, $\psi$ fixes at most two points on $E_2$, and as $\psi(D_2) = D_2$,
		this implies
		$$\psi(E_2 \cap C_{21}) = E_2 \cap C_{20}, \ \ \psi(E_2 \cap C_{20}) = E_2 \cap C_{21}, \ \ \psi(E_2 \cap C_{23}) = E_2 \cap C_{23}.$$
		In terms of the affine coordinate $x$ of $E_2$, defined by~\eqref{eqn-xE}, we have 
		$$\psi(1) = 0, \ \ \psi(0) = 1, \ \ \psi(\infty) = \infty,$$
		so $\psi(x) = 1- x$, and thus 
		together with (\ref{eq42}), we conclude that
		\begin{equation}\label{eqn-psicoord}
		    \psi_n(x) = \frac{1}{r^n} - x.
		\end{equation}
		As $r$ is not a root of unity,
		this proves the assertion.  
	\end{proof}

	The next proposition will be useful when we prove Theorem~\ref{thm32}.

	\begin{proposition}\label{prop43} 

	Assume that the parameters $s,t$ are such that $E$ is not isogenous to $F$, and $r$ is not a root of unity (this holds for very general $s,t$). Then
		\begin{enumerate}
			\item There is a nef and big curve $\Sigma \subset X$ such that $\gamma(\Sigma) = \Sigma$ for all 
				$$\gamma \in {\rm Cent}(\psi) := \{g \in {\rm Aut}(X)\,|\, g \circ \psi = \psi \circ g\}.$$
			In particular, ${\rm Cent}(\psi)$ is a finite group. 
			\item The set of conjugacy classes of 
			$${\mathcal S} := \{\psi_n\,|\, n \in {\mathbb Z}\} \subset {\rm Aut}(X)$$
			in ${\rm Ine} (X, C_{23})$ is an infinite set. 
		\end{enumerate}
	\end{proposition}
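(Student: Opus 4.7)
The strategy is to exhibit an effective $\mathrm{Cent}(\psi)$-invariant divisor $\Sigma$ that is nef and big on $X$, and deduce finiteness of $\mathrm{Cent}(\psi)$ via the Hodge-index plus Fujiki--Lieberman argument already used in Proposition~\ref{pro-Ksgfini}. First I would identify the fixed locus: since $\psi$ is the fiberwise inversion of $\Phi_{|D_2|}$ with zero section $C_{23}$, the scheme $X^\psi$ coincides with the $2$-torsion subscheme of $\Phi_{|D_2|}$, and the local monodromy $T^8\equiv I\pmod 2$ around each $I_8$-fiber shows that the $2$-torsion splits globally into four disjoint sections. Hence $X^\psi = C_{23}\sqcup C_{32}\sqcup\sigma_1\sqcup\sigma_2$, with $C_{32}$ being $2$-torsion by Lemma~\ref{lem-tau}. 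Any $\gamma\in\mathrm{Cent}(\psi)$ preserves $X^\psi$ set-wise and hence permutes these four sections. Moreover, the fibration $\Phi_{|D_2|}$ is intrinsically determined by $\psi$ together with any choice of $2$-torsion section as zero (its general fibers are the connected elliptic curves on which $\psi$ acts as inversion), so $\gamma$ preserves $\Phi_{|D_2|}$ up to an automorphism of the base; in particular $\gamma^*[F]=[F]$ and $\gamma$ permutes the pair $\{D_2,D_2'\}$ of reducible fibers.

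Setting $\Sigma := X^\psi + N(D_2+D_2')$ for a positive integer $N$, the divisor $\Sigma$ is then set-wise $\mathrm{Cent}(\psi)$-invariant. A direct intersection computation gives $[\Sigma]^2 = -8 + 16N$, positive for $N\geq 1$, so $\Sigma$ is big; and case-wise checks (for a $2$-torsion section $C_i\subset X^\psi$, $[\Sigma]\cdot [C_i] = -2+2N$; for a fiber component $C\subset D_2\cup D_2'$, $[D_2+D_2']\cdot [C]=0$ and $[X^\psi]\cdot [C]\geq 0$; for any other irreducible curve $C$, both terms are non-negative) show that $\Sigma$ is nef. Finiteness of $\mathrm{Cent}(\psi)$ then follows exactly as in Proposition~\ref{pro-Ksgfini}: $\mathrm{Cent}(\psi)$ preserves $[\Sigma]$ and therefore acts on the negative-definite hyperplane $[\Sigma]^\perp\subset\NS(X)\otimes\R$ through a finite group, so has finite image in $\mathrm{Aut}^*(X)$; combined with the finite kernel of $\mathrm{Aut}(X)\to\mathrm{Aut}^*(X)$ given by Fujiki--Lieberman (since $\mathrm{Aut}^0(X)=\{\id_X\}$ on the K3 surface $X$), the group $\mathrm{Cent}(\psi)$ is finite.

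\textbf{Plan for (2).} Assume for contradiction that $\{\psi_n\}$ falls into finitely many $\mathrm{Ine}(X,C_{23})$-conjugacy classes. By pigeonhole there exist $n_0\in\Z$ and an infinite sequence of pairwise distinct integers $(n_k)_k$ with $\psi_{n_k}=g_k\psi_{n_0}g_k^{-1}$ for some $g_k\in\mathrm{Ine}(X,C_{23})$. Substituting $\psi_n = f^{-4n}\psi f^{4n}$ and rearranging yields $h_k := f^{4n_k}g_k f^{-4n_0}\in\mathrm{Cent}(\psi)$, a finite group by (1). Hence $h_k=h_{k'}$ for some $k\neq k'$, giving
\[
f^{4(n_{k'}-n_k)} \;=\; g_k g_{k'}^{-1} \;\in\; \mathrm{Ine}(X,C_{23}),
\]
so $f^{4m}$ restricts to the identity on $C_{23}$ for the nonzero integer $m := n_{k'}-n_k$. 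But $f^4$ acts on each smooth component of the $I_8$-fiber $D_1$ (in particular on $C_{23}$) by multiplication by $r=r(s,t)$ in the $\G_m$-coordinate coming from the Mordell--Weil translation structure, and $r$ is not a root of unity by hypothesis---contradiction.

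\textbf{Main obstacle.} The delicate points are the precise identification of $X^\psi$ as exactly four disjoint rational sections (via the global splitting of the $2$-torsion of $\Phi_{|D_2|}$ from the monodromy calculation) and the $\mathrm{Cent}(\psi)$-invariance of the fibration $\Phi_{|D_2|}$ itself (via an intrinsic characterization in terms of $\psi$ and a chosen $2$-torsion zero section). Once these are settled, the intersection calculation yielding nefness and bigness of $\Sigma$, and the algebraic manipulation in part (2), are routine.
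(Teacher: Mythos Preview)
Your argument for (2) is essentially the paper's: both reduce to showing that if $\psi_m$ and $\psi_n$ are conjugate in $\Ine(X,C_{23})$ then $f^{4n}\circ h\circ f^{-4m}\in\mathrm{Cent}(\psi)$ for some $h\in\Ine(X,C_{23})$, and then restricting to $C_{23}$ (where $h$ acts trivially and $f^4$ acts by $z\mapsto r^{\pm 1}z$, the sign coming from the symplectic condition $\det(df^4)_{P_{23}}=1$ together with $f^4|_{E_2}(x)=rx$) to get a contradiction with $r$ not a root of unity. Your pigeonhole phrasing is a harmless variant.

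For (1), your description of $X^\psi$ is wrong, and this is the main gap. You only check the monodromy around the two $I_8$ fibers. But under the hypothesis that $E$ and $F$ are non-isogenous, the fibration $\Phi_{|D_2|}$ is of type $\mathcal{J}_1$ in the sense of \cite{Og89} and has, in addition to $D_2$ and $D_2'$, eight singular fibers of type $I_1$; the monodromy $\left(\begin{smallmatrix}1&1\\0&1\end{smallmatrix}\right)$ around each of these is \emph{not} trivial mod $2$. Equivalently, $\MW(\Phi_{|D_2|})\simeq\Z^2\oplus\Z/2\Z$ has exactly one nontrivial $2$-torsion element, namely $C_{32}$. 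Thus the $2$-torsion subscheme does \emph{not} split into four sections: its horizontal part is $C_{23}\sqcup C_{32}\sqcup\Sigma$, where $\Sigma$ is an irreducible bisection, ramified over the eight $I_1$ base points (indeed the eight nodes lie on $\Sigma$, as $X^\psi$ is smooth), hence of genus $\geq 3$ by Riemann--Hurwitz.

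This is precisely the curve the paper uses: since $\Sigma^2=2g(\Sigma)-2\geq 4$, it is already nef and big; and since any two genus $\geq 2$ curves on a K3 surface must meet (Hodge index), $\Sigma$ is the \emph{unique} component of $X^\psi$ with $g\geq 2$, hence automatically $\mathrm{Cent}(\psi)$-stable. There is no need to argue that $\mathrm{Cent}(\psi)$ preserves the fibration $\Phi_{|D_2|}$ or the pair $\{D_2,D_2'\}$; your proposed intrinsic characterisation of the fibers via $\psi$ is plausible but you have not made it rigorous, and in any case the auxiliary divisor $X^\psi+N(D_2+D_2')$ is unnecessary once the irreducible higher-genus component of $X^\psi$ is correctly identified.
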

	
	\begin{proof}
		
		Let us show (1). We will show that $X^{\psi}$ contains a unique irreducible smooth curve of genus $g \ge 2$, which we will denote by $\Sigma$. Note that then $\Sigma$ is nef and big, as
		$$\Sigma^2 = 2g(\Sigma) - 2 >0.$$
		So, provided the existence and uniqueness of $\Sigma$, it clearly follows that $\gamma(\Sigma) = \Sigma$ for all $\gamma \in {\rm Cent}(\psi)$ and, by the fact that $\Sigma$ is nef and big, it also follows that ${\rm Cent}(\psi)$ is finite by \cite[Proposition 2.25]{Br18}.
		
		Let us now establish the existence and uniqueness of $\Sigma$. 
		Since $\psi$ is an involution which
		satisfies $\psi^* \omega_X = -\omega_X$, 
		by~\cite[Lemma 1]{Be11}
		the fixed point locus 
		$X^{\psi}$ is either empty, or a disjoint union of smooth irreducible curves.
		As $C^2 > 0$ for any smooth irreducible curve $C \subset X$
		of genus greater or equal to $2$, it follows from the Hodge index theorem that any two curves of genus higher or equal to 2 in $X$ intersect.
		In particular, $X^{\psi}$ has at most one component $\gS$ of genus $g \ge 2$.
		
		It remains to show that such a component exists.
		By~\cite[Theorem 2.1]{Og89},
		since $E$ and $F$ are not isogenous and since $\Phi_{|D_2|}$ is of Type ${\mathcal J}_1$, we have 
		$${\rm MW} (\Phi_{|D_2|}) = {\mathbb Z}^2 \oplus {\mathbb Z}/2{\mathbb Z}.$$
		By Lemma~\ref{lem-tau},
		the unique non-trivial torsion element in ${\rm MW} (\Phi_{|D_2|})$ is thus $C_{32}$. 
		Note that $X^{\psi}$ intersects every smooth fiber of $\Phi_{|D_2|}$ at its four torsion points. Hence, the irreducible components of $X^{\psi}$ which dominate $B_2$ are the zero section $C_{23}$, the unique $2$-torsion section $C_{32}$, 
		and the closure $\Sigma$ in $X$ of the set of remaining $2$-torsion points
		in the fibers of $\Phi_{|D_2|}$; necessarily $\gS$ is irreducible. 
		
		Let us study the ramification of the double cover $\Phi_{|D_2|}|_{\Sigma}:\Sigma\to\mathbb{P}^1$. For that, we need to understand the singular fibers of $\Phi_{|D_2|}$. Let us prove that there are no fibers of type II for $\Phi_{|D_2|}$: Then by \cite[Theorem 2.1]{Og89}, the singular fibers are two fibers of type $\mathrm{I}_8$ and eight fibers of type $\mathrm{I}_1$. Note that, by \cite[Corollary 11.2.4]{Hu16}, the smooth part of a type II singular fiber is isomorphic to $(\C,+)$, the group action of it being compatible with the action of the Mordell-Weil group. As $(\C,+)$ has no torsion element, the non-trivial 2-torsion section in the Mordell-Weil group must act trivially in this singular fiber, hence it yields a symplectic automorphism (namely a translation) of order two fixing a whole curve pointwise, contradiction. This implies that $\Phi_{|D_2|}$ has no type II singular fibers. 
		Hence, by~\cite[Theorem 2.1]{Og89}, $\Phi_{|D_2|}$ has eight singular fibers of type $\mathrm{I}_1$ (i.e., rational curves with a node). For each of these singular fibers, the smooth locus is isomorphic to $(\C^*,\times)$, with multiplication by $-1$ (fixing the node) corresponding to the translation by the unique non-trivial 2-torsion element $C_{32}$, and with the inversion $\psi$ corresponding to the map $z\mapsto\frac{1}{z}$ (fixing the node). In particular, $\psi$ fixes the intersection of the singular fiber with $C_{23}$, with $C_{32}$, and the node of the singular fiber. But as $\psi$ is antisymplectic, its fixed locus is a disjoint union of smooth curves, in particular the nodes of the singular fibers belong to horizontal components of $X^{\psi}$. But the horizontal components of $X^{\psi}$ are $C_{23}$, $C_{32}$ and $\Sigma$. So the eight nodes all belong to $\Sigma$. Thus the projection $\Sigma \to \mathbb{P}^1$ is a double cover branched 
		at at least eight points,
		so $\Sigma$ has genus at least $3$ by Riemann-Hurwitz's formula. 
		This completes the proof of (1).
		
		Now we show (2). Recall that $\psi_n \not= \psi_m$ if $n \not= m$ by Proposition \ref{prop42}. So, as in \cite[Lemma 4.5]{DO19}, it suffices to show that for each fixed $n$, there are only finitely many $m$ such that 
		\begin{equation}\label{eq45}
			\psi_m = h^{-1} \circ \psi_n \circ h
		\end{equation}
		for some $h \in {\rm Ine} (X, C_{23})$. 
		
		Since $\psi_n = f^{-4n} \circ \psi \circ f^{4n}$, we have from (\ref{eq45}) that
		$$f^{-4m} \circ \psi \circ f^{4m} = h^{-1} \circ f^{-4n} \psi \circ f^{4n} \circ h$$
		and equivalently
		$$f^{4n} \circ h \circ f^{-4m} \circ \psi = \psi \circ f^{4n} \circ h \circ f^{-4m}.$$
		Thus 
		$$f^{4n} \circ h \circ f^{-4m} \in {\rm Cent} (\psi).$$
		
		Recall that $f^4$ fixes each component of $D_1$, and thus acts as the multiplication by $r\in\C^*$ on the smooth part of $D_1$ identified to the group $\C^*\times\Z/8\Z$.
		Then from $x(E_2 \cap C_{23}) = \infty$, $f^{\ast}\go_X = \go_X$ and $f^4|_{E_2}(x) = r \cdot  x$,
		there exists an affine coordinate $z$ on $C_{23}$,
		with $z(C_{23} \cap E_2) = 0$, and $z(C_{23} \cap F_3) = \infty$, such that
		$$f^{4n}|_{C_{23}}(z) = r^n \cdot z$$ 
		for all $n$. Since $h|_{C_{23}}(z) = z$ as $h \in {\rm Ine}(X, C_{23})$, it follows that 
		$$f^{4n} \circ h \circ f^{-4m}|_{C_{23}} (z) = r^{n-m} \cdot z.$$
		Since ${\rm Cent} (\psi)$ is a finite set, necessarily
		$${\mathcal R}_n := \Set{r^{n-m} | f^{4n} \circ h \circ f^{-4m} \in {\rm Cent} (\psi)}$$
		is finite as well. 
		As $r$ is not a root of unity, 
		it follows that for each fixed $n$, 
		there are only finitely many integers $m$ satisfying 
		$f^{4n} \circ h \circ f^{-4m} \in {\rm Cent} (\psi)$. 
		This completes the proof of (2).   
	\end{proof}
	
	\section{Mukai's Enriques surfaces over $\bk$}\label{sct4}

	Let $\bk$ be a field of characteristic zero.
	We continue the constructions and use the same notations 
	as in Subsection~\ref{ssec-Kummer}.
	Throughout Section~\ref{sct4}, we assume that 
	$$s \ne t,$$
	where
	$s,t$ are the parameters 
	in~\eqref{eq31} and~\eqref{eq32} defining $E$ and $F$.

	\subsection{Mukai's Enriques surfaces}\label{sct3sub2}

	In this subsection, following Mukai \cite{Mu10},
	we recall the construction of an Enriques surface $Z$ from 
	a certain Kummer surface of product type.
	We repeat the construction to emphasize that
	Mukai's construction works over any field $\bk$ of characteristics zero,
	and to
	fix some notations.
	
	Set
	$$\theta := [(1_E, -1_F)] = [(-1_E, 1_F)] \in {\rm Aut}(X).$$
	Then $\theta$ is an automorphism of $X$ of order $2$.
	Let $$T := X/\langle \theta \rangle, \ \text{ and } \ q : X \to T$$
	be the quotient surface and the quotient morphism. 
	Then $T$ is a smooth projective surface, 
	and each $q(C_{ij})$ ($0 \le i, j \le 3$) is a smooth $(-1)$-curve over $\bk$. 
	By construction, blowing down $T$ along these $16$ $(-1)$-curves
	is 
	$$ (E/\pm 1_E) \times (F/\pm 1_F) \simeq \P^1 \times \P^1,$$
	and each $C_{ij} \subset X$ gets contracted to 
	a $\bk$-point
	$$p_{ij} \in \P^1 \times \P^1$$ 
	under the composition
	$$X \to T \to \P^1 \times \P^1.$$

	Consider the Segre embedding
	$$Q \cnec \P^1 \times \P^1 \subset \P^3,$$
	and the two sets
	$$\{ P_{i0},P_{i1}, P_{i2}, P_{i3}\}\subset E_i\cong \P^1,\ \ \{P'_{0j}, P'_{1j}, P'_{2j}, P'_{3j}\} \subset F_j\cong \P^1.$$ 
	Since $s \ne t$
	by assumption, 
	the two ordered $4$-tuples 
	$$(P_{i0},P_{i1}, P_{i2}, P_{i3}),\ \ (P'_{0j}, P'_{1j}, P'_{2j}, P'_{3j})$$ 
	are not projectively equivalent.
	In other words,
	the four $\bk$-points 
	$p_{00}, p_{11}, p_{22}, p_{33} \in Q$  are not coplanar in $\P^3$. Moreover, none of the lines passing through two of these four points is included in the quadric $Q$.
	We may therefore choose the homogeneous coordinates 
	$[w_1 : w_2 : w_3 : w_4]$
	of 
	$$\P^3 = {\rm Proj}\, \bk[w_1, w_2, w_3, w_4]$$
	so that
	$$p_{00}=[1:0:0:0],\ \ p_{11}= [0:1:0:0],\ \ p_{22}=[0:0:1:0],\ \ p_{33}=[0:0:0:1].$$
	Then, up to multiplying 
	$w_i$ by some element in $\bk^\times$ if necessarily, the equation of $Q$ is written in the form
	\begin{equation}
	\alpha_1w_2w_3 + \alpha_2w_1w_3 + \alpha_3w_1w_2 + (w_1+w_2+w_3)w_4 = 0 
	\end{equation}
	for some $\alpha_i \in \bk^\times$ satisfying the smoothness (non-degeneration) condition
	$$\alpha_1^2 + \alpha_2^2 + \alpha_3^2 -2\alpha_1\alpha_2 - 2\alpha_1\alpha_3 - 2\alpha_2\alpha_3 \not= 0.$$
	Then the Cremona involution of $\P^3$
	$$\tilde{\tau}' : [w_1 :w_2 : w_3 : w_4] \mapsto [\alpha_1w_2w_3w_4 : \alpha_2w_1w_3w_4 : \alpha_3w_1w_2w_4 : \alpha_1\alpha_2\alpha_3w_1w_2w_3]$$
	is defined over $\bk$ and satisfies
	$\tilde{\tau}'(Q) = Q$.
	Hence we obtain a birational automorphism
	$$\tau' := \tilde{\tau}'|_{Q} \in {\rm Bir} (Q/\bk).$$
	By the definition of $\tau'$, one can readily check the following facts (\cite[Section 2]{Mu10}).
	\begin{lemma}\label{lem32}
	\hfill
	
		\begin{enumerate}
			\item The indeterminacy locus of $\tau'$ consists of the four points $p_{00},p_{11},p_{22},p_{33}$, and $\tau'$ contracts the conic  $C'_{i} := Q \cap 
			(w_i= 0)$ to $p_{ii}$ ($0 \le i \le 3$).
			\item $\tau'$ interchanges the two lines through $p_{ii}$ for each $i=0$, $1$, $2$, $3$.
			\item $\mu^{-1}\circ \tau' \circ \mu \in {\rm Aut}(B/\bk)$, where $\mu : B \to \P^1 \times \P^1$ is the blow-up at the four $\bk$-points $p_{ii}$ ($0 \le i \le 3$).
		\end{enumerate}
	\end{lemma}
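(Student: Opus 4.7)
The plan is to verify the three assertions by direct computation with the explicit formula defining $\tilde\tau'$ together with the defining equation of $Q$.

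For (1), I would first observe that the indeterminacy locus of $\tilde\tau'$ on $\P^3$ is the common zero locus of the four cubic monomials $w_2w_3w_4$, $w_1w_3w_4$, $w_1w_2w_4$, $w_1w_2w_3$, which is the set of four coordinate points of $\P^3$; under our normalization these are $p_{00}, p_{11}, p_{22}, p_{33}$, all of which lie on $Q$. To see that $\tau' = \tilde\tau'|_Q$ has no further indeterminacy, one checks that off these four points at least one output coordinate is nonzero (no new cancellation arises from the quadric relation). Substituting $w_k = 0$ in the formula defining $\tilde\tau'$ shows that three of its four output coordinates vanish identically, so the conic $C'$ cut out on $Q$ by $w_k = 0$ is contracted to the coordinate point whose unique nonzero coordinate is the $k$-th one, namely the appropriate $p_{jj}$.

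For (2), I would use that $\tilde\tau' \circ \tilde\tau' = (\alpha_1\alpha_2\alpha_3) \cdot w_1 w_2 w_3 w_4 \cdot \id_{\P^3}$ on the open set $\{w_1w_2w_3w_4\neq 0\}$, so $\tau'$ is a birational involution of $Q$. Granting (3), the exceptional divisor $E_{ii} \simeq \P^1$ over $p_{ii}$ in $B$ parametrizes tangent directions of $Q$ at $p_{ii}$, and the lift $\widetilde{\tau'}$ maps $E_{ii}$ biregularly onto the strict transform of $C'_i$. Hence the induced involution of $E_{ii}$ coming from $\widetilde{\tau'} \circ \widetilde{\tau'}$-duality is nontrivial (which one can verify directly from the Jacobian of $\tilde\tau'$ at $p_{ii}$). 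The two rulings of $Q$ through $p_{ii}$ correspond to two distinct points of $E_{ii}$; since they are the only two tangent directions whose lines extend as $(-1)$-curves on $B$ preserved set-theoretically by $\widetilde{\tau'}$, they must be exchanged.

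For (3), I would invoke the standard resolution-of-indeterminacy criterion: the birational map $\mu^{-1} \circ \tau' \circ \mu$ on $B$ has potential indeterminacies only over the four points $p_{ii}$. A local computation near each $p_{ii}$ -- solving the quadric equation for one of the $w_j$ to get local coordinates on $Q$, rewriting $\tilde\tau'$ in these coordinates, and then blowing up the origin -- shows that the lifted map sends the exceptional divisor $E_{ii}$ over $p_{ii}$ isomorphically onto the strict transform of $C'_i$ and vice versa. Hence $\mu^{-1}\circ \tau'\circ\mu$ is defined everywhere on $B$; since it is a lift of an involution, it is itself an involutive automorphism.

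The main obstacle I anticipate is the combinatorial bookkeeping -- pinning down which conic $C'_i$ gets contracted to which point $p_{jj}$ (in particular reconciling the index range $0 \le i \le 3$ with the coordinates $w_1, \ldots, w_4$), together with the local-chart computation near each $p_{ii}$ needed to verify that no hidden cancellation leaves extra indeterminacies after restricting from $\P^3$ to $Q$ and then pulling back to $B$. Once these verifications are done, all three assertions follow simultaneously.
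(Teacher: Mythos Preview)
The paper does not give a proof of this lemma: it simply writes ``By the definition of $\tau'$, one can readily check the following facts'' and cites Mukai \cite[Section 2]{Mu10}. Your plan to verify everything by direct computation from the formula for $\tilde\tau'$ is therefore exactly what the paper asks the reader to do, and your outlines for (1) and (3) are sound.

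Your argument for (2), however, is muddled. The phrase ``the induced involution of $E_{ii}$ coming from $\widetilde{\tau'}\circ\widetilde{\tau'}$-duality'' does not parse: $\widetilde{\tau'}$ is already an involution on $B$, so $\widetilde{\tau'}\circ\widetilde{\tau'}=\id_B$, and $\widetilde{\tau'}$ does not map $E_{ii}$ to itself but to the strict transform $\tilde C'_i$, so there is no obvious induced involution on $E_{ii}$. Your $(-1)$-curve argument can be made to show that the \emph{pair} $\{L_1-E_i,\,L_2-E_i\}$ of ruling classes is preserved by $\widetilde{\tau'}^{\,*}$ (indeed, the three strict transforms $\tilde C'_l$ with $l\neq i$ also meet $E_{ii}$, and these are sent to the three exceptionals $E_l$; by elimination the two rulings must go to each other or to themselves). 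But this alone does not decide between ``swapped'' and ``each fixed'': both options are consistent isometries of $\Pic(B)$ extending $E_j\leftrightarrow\tilde C'_j$ and fixing $K_B$. To settle (2) you must actually compute. The quickest route is to parametrize a ruling line through $p_{ii}$ as $[1:ta:tb:tc]$ (with $(a,b,c)$ the tangent direction), apply $\tilde\tau'$, cancel the common factor $t^2$, and read off that the image is again a line through $p_{ii}$ with tangent direction $[\alpha_2 bc:\alpha_3 ac:\alpha_1\alpha_2\alpha_3 ab]$; comparing $[a:b]$ with $[\alpha_2 b:\alpha_3 a]$ against the quadratic satisfied by the two ruling directions shows they are distinct for the $\alpha_i$ at hand, hence swapped. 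This is presumably what you meant by ``verify directly from the Jacobian,'' and once you do it the rest of your plan goes through.
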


	By Lemma \ref{lem32} (2), $\tau'(p_{ij}) = p_{ji}$ if $0 \le i \not= j \le 3$. Therefore $\tau'$ lifts to 
	$$\tilde{\tau} \in {\rm Aut}(T/\bk)$$
	by Lemma \ref{lem32} (3).
	Since $q : X \to T$ is the finite double cover branched along the unique anti-bicanonical divisor $$\sum_{i=0}^{3} (q(E_i)+q(F_i))\in |-2K_T|,$$
	it follows that $\tilde{\tau}$ lifts to an involution 
	\begin{equation}\label{def_eps}
		\epsilon \in {\rm Aut}(X/\bk).
	\end{equation}
	A priori, there are exactly two choices of the lifting $\epsilon$; 
	if we denote one lifting by $\epsilon_0$ then the other is $ \theta \circ \epsilon_0$. 
	Let $\go_X$ be a generator of $H^0(X,\gO_X^2)$.
	Since $\theta^*\omega_X = -\omega_X$ and $g^*\omega_X = \pm \omega_X$ 
	for any involution $g: X \cto$, we choose the unique lift $\epsilon$ with $\epsilon^* \omega_X = -\omega_X$.
	Let
	\begin{equation}\label{def_Z}
		Z := X/\langle \epsilon \rangle, \ \text{ and } \ \pi : X \to Z
	\end{equation}
	be the quotient surface and the quotient morphism.
	The following theorem, which is crucial for us, was proven by Mukai \cite[Proposition 2]{Mu10}.
	
	\begin{thm}\label{thm31}
		The involution $\epsilon$ acts freely on $X$ and $Z$ is an Enriques surface.  
	\end{thm}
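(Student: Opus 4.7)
My plan is to deduce (2) from (1) by standard surface theory, and then to establish (1) through a direct analysis of the fixed locus of $\tilde{\tau}$ on $T$.

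For the implication (1)$\Rightarrow$(2): if $\epsilon$ acts freely, then $\pi \colon X \to Z$ is an \'etale double cover. From $\pi^{*}K_Z = K_X = 0$ and the fact that $\omega_X$ does not descend (since $\epsilon^{*}\omega_X = -\omega_X$ by construction), $K_Z$ is a non-trivial $2$-torsion line bundle. Combined with $\chi(\mathcal{O}_Z) = \tfrac{1}{2}\chi(\mathcal{O}_X) = 1$ and $h^0(Z, K_Z) = 0$, this yields $h^1(\mathcal{O}_Z) = 0$, so the Kodaira--Enriques classification identifies $Z$ as an Enriques surface.

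For (1), I argue by contradiction. Since $\epsilon$ is antisymplectic, Beauville's lemma \cite[Lemma 1]{Be11} (already invoked in the proof of Proposition~\ref{prop43}) forces $\mathrm{Fix}(\epsilon)$ to be a disjoint union of smooth curves whenever non-empty. Any such fixed curve $C \subset X$ projects under the finite morphism $q \colon X \to T$ to a $\tilde{\tau}$-fixed curve $q(C) \subset T$, so the task reduces to showing that $\tilde{\tau}$ admits no fixed curve on $T$.

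I split into two cases according to whether a hypothetical $\tilde{\tau}$-fixed curve $D \subset T$ is contracted by the blow-up $T \to Q$. If $D$ is contracted, then $D$ is one of the sixteen exceptional $(-1)$-curves $q(C_{ij})$. Reading off Lemma~\ref{lem32}, $\tau'$ interchanges the two rulings of $Q$ through each $p_{ii}$ and hence sends $p_{ij} \mapsto p_{ji}$ for $i \neq j$; on $T$ this gives $\tilde{\tau}(q(C_{ij})) = q(C_{ji})$ for $i \neq j$ and $\tilde{\tau}(q(C_{ii})) = \tilde{C}_i$, the strict transform of $C'_i$, which is disjoint from $q(C_{ii})$ since $p_{ii} \notin C'_i$. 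None of the exceptional curves is therefore fixed. If, on the contrary, $D$ is not contracted, then its image in $Q$ is a $\tau'$-fixed curve on a dense open subset; solving the fixed-point equations for the explicit involution $\tilde{\tau}'$ on $\mathbb{P}^3$ directly, on the open set $\{w_1 w_2 w_3 w_4 \neq 0\}$ one obtains the monomial system $w_j^2/w_1^2 = \alpha_j/\alpha_1$ for $j = 2, 3$ and $w_4^2/w_1^2 = \alpha_2 \alpha_3$, defining a finite subscheme of at most eight points. So $\tau'$ admits no fixed curve on $Q$ either, yielding the desired contradiction. The only technically delicate point I foresee is the bookkeeping of the action of $\tilde{\tau}$ on the twenty distinguished rational curves $\{q(C_{ij})\} \cup \{\tilde{C}_i\}$, but this is mechanical given Lemma~\ref{lem32}.
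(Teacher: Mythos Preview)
The paper does not give its own proof of Theorem~\ref{thm31}; it simply attributes the result to Mukai \cite[Proposition~2]{Mu10} and moves on. So there is no in-paper argument to compare against, and your proposal is supplying what the paper deliberately omits.

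Your reduction (1)$\Rightarrow$(2) is standard and correct. Your strategy for (1) is also sound: Beauville's lemma upgrades a hypothetical fixed point of the antisymplectic involution $\epsilon$ to a fixed curve, which then pushes down along $q$ to a $\tilde\tau$-fixed curve on $T$; you then rule out such curves by cases. Case~A (exceptional curves) is handled correctly from Lemma~\ref{lem32}: for $i\ne j$ one has $\tilde\tau(q(C_{ij}))=q(C_{ji})$, and for $i=j$ the exceptional curve $q(C_{ii})$ is swapped with the strict transform $\tilde C_i$ of the conic $C'_i$, which is disjoint from it since $p_{ii}\notin C'_i$.

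There is one small gap in Case~B. You compute the fixed locus of $\tilde\tau'$ on the open torus $\{w_1w_2w_3w_4\ne 0\}$ and find it finite, then conclude that $\tau'$ fixes no curve on $Q$. This tacitly assumes that the image curve $D'\subset Q$ meets the torus. You must also exclude the possibility $D'\subset C'_i$ for some $i$; but this is immediate, since by Lemma~\ref{lem32}(1) the birational involution $\tau'$ contracts $C'_i$ to the single point $p_{ii}\notin C'_i$, so no point of $C'_i$ (away from the indeterminacy locus) is $\tau'$-fixed. With that one sentence inserted, your argument is complete.
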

	
	Note that the involution $\epsilon$ does not come from any involution of the Kummer quotient $E\times F/\langle -1_{E\times F}\rangle$, 
	since it does not preserve the set of exceptional divisors (and in particular the $C_{ii}$ for $0\le i\le 3$) of the birational map $X\to E\times F/\langle -1_{E\times F}\rangle$.

	\begin{lemma}\label{lem-eps} 
	
	The involution $\epsilon \in {\rm Aut}(X)$ satisfies

		\begin{enumerate} 
			\item $\epsilon (E_i) = F_i$, $\epsilon(F_i) = E_{i}$ for all $i =0$, $1$, $2$, $3$.
			\item $\epsilon(C_{ij}) = C_{ji}$ for all $0\leq i \not= j \leq 3$. 
		\end{enumerate}
	\end{lemma}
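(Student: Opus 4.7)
The strategy is to track the involution $\epsilon$ through the quotient $q : X \to T = X/\langle\theta\rangle$ and the blow-down $T \to \P^1 \times \P^1$, using the explicit description of $\tau'$ provided by Lemma~\ref{lem32}. First, I will identify the images of the curves $E_i$, $F_j$, and $C_{ij}$ in $\P^1 \times \P^1$ under the composite $X \to T \to (E/\pm 1) \times (F/\pm 1)$. Since this composite factors through the quotient map $E \times F \to (E/\pm 1) \times (F/\pm 1)$ by the full group $\langle -1_{E\times F}, \theta\rangle \simeq (\Z/2)^2$, and since $E_i$ (resp.\ $F_j$, resp.\ $C_{ij}$) arises from $E \times \{\tau'_i\}$ (resp.\ $\{\tau_j\} \times F$, resp.\ the singularity above $(\tau_i, \tau'_j)$), we get: $E_i$ maps onto the ruling $\ell_i := \P^1 \times \{[\tau'_i]\}$, $F_j$ maps onto the ruling $\ell'_j := \{[\tau_j]\} \times \P^1$, and $C_{ij}$ is contracted to the point $p_{ij} = ([\tau_i], [\tau'_j]) = \ell'_i \cap \ell_j$.

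Next, I will compute the action of $\tau'$ on these objects. The point $p_{ii} = \ell'_i \cap \ell_i$ lies on both lines $\ell_i$ and $\ell'_i$, which are exactly the two lines of $Q$ through $p_{ii}$. By Lemma~\ref{lem32}(2), $\tau'$ interchanges them: $\tau'(\ell_i) = \ell'_i$ and $\tau'(\ell'_i) = \ell_i$. Taking intersections gives $\tau'(p_{ij}) = \tau'(\ell'_i) \cap \tau'(\ell_j) = \ell_i \cap \ell'_j = p_{ji}$ for all $i,j$. Since $T$ is the blow-up of $\P^1 \times \P^1$ at the $16$ points $\{p_{ij}\}$ and $\tilde\tau \in \Aut(T/\bk)$ lifts $\tau'$, it follows that $\tilde\tau(q(E_i)) = q(F_i)$, $\tilde\tau(q(F_i)) = q(E_i)$, and $\tilde\tau(q(C_{ij})) = q(C_{ji})$.

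Finally, I will lift this information back to $X$. The key point is that $\theta$ preserves each of the $24$ curves in the double Kummer pencil as a set: using $\theta = [(1_E,-1_F)] = [(-1_E,1_F)]$, one checks that $\theta$ preserves $E \times \{\tau'_i\}$, $\{\tau_j\} \times F$, and the $2$-torsion point $(\tau_i, \tau'_j)$ on $E \times F$, hence preserves their images (and the exceptional divisor $C_{ij}$ above the corresponding singularity) in $X$. Therefore, for every curve $W \in \{E_i, F_j, C_{ij}\}$, the preimage $q^{-1}(q(W))$ equals $W$ itself. Since $q \circ \epsilon = \tilde\tau \circ q$ by construction, we obtain
\[
\epsilon(E_i) \subseteq q^{-1}(\tilde\tau(q(E_i))) = q^{-1}(q(F_i)) = F_i,
\]
and as $\epsilon$ is an automorphism the inclusion is an equality. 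The same argument yields $\epsilon(F_i) = E_i$ and $\epsilon(C_{ij}) = C_{ji}$ for $i \neq j$.

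There is no significant obstacle; the argument is essentially bookkeeping through a tower of quotients and contractions. Note in particular that the two possible lifts $\epsilon_0, \theta \circ \epsilon_0$ of $\tilde\tau$ act identically on the $24$ curves (since $\theta$ preserves each of them), so the specific normalization $\epsilon^* \omega_X = -\omega_X$ picked in the definition of $\epsilon$ plays no role in this lemma.
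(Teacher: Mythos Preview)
Your proof is correct and is precisely a detailed unpacking of the paper's one-line justification ``Both statements follow from the constructions of $\tilde{\tau}$ and $\epsilon$.'' The paper gives no further argument, so your careful tracking of the curves through $X \to T \to \P^1 \times \P^1$, together with the observation that $\theta$ preserves each of the $24$ curves (so that $q^{-1}(q(W)) = W$), is exactly what is implicitly being invoked.
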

	\begin{proof} Both statements follow from the  
	constructions of $\tilde{\tau}$ and $\epsilon$. 
	\end{proof}

 \ssec{Descending $\psi_n$}
 
 We assume that $\bk$ is algebraically closed.
    
    \begin{lemma}\label{lem-descent-psin}
	Each $\psi_n$ descends to an automorphism on $Z$
	fixing $D_{23} \cnec \pi(C_{23})$ pointwise. 
	\end{lemma}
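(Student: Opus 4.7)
The plan is to show that each $\psi_n$ commutes with the Enriques involution $\epsilon$ on $X$. Since $\epsilon$ acts freely on $X$ by Theorem~\ref{thm31}, an automorphism of $X$ commuting with $\epsilon$ descends to an automorphism of $Z = X/\langle\epsilon\rangle$. Moreover, since $\psi_n \in \Ine(X, C_{23})$ fixes $C_{23}$ pointwise and $\pi|_{C_{23}}\colon C_{23} \to D_{23}$ is surjective, the descent $\bar\psi_n$ will automatically fix $D_{23} = \pi(C_{23})$ pointwise. Since $\psi_n = f^{-4n}\psi f^{4n}$, it then suffices to show that $\epsilon$ commutes with both $f$ and $\psi$.

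For the commutation with $\psi$: by Lemma~\ref{lem-eps}, $\epsilon(D_2) = D_2'$, so $\epsilon$ preserves the fibration $\Phi_{|D_2|}$, and $\epsilon \psi \epsilon^{-1} \in \Aut(X/B_2)$ is an antisymplectic involution. By Lemma~\ref{lem-tau}, $C_{32}$ is a $2$-torsion section of $\MW(\Phi_{|D_2|})$, so the inversion $\psi$ fixes $C_{32}$ pointwise. Combined with $\epsilon(C_{23}) = C_{32}$ (Lemma~\ref{lem-eps}), this forces $\epsilon \psi \epsilon^{-1}$ to fix $C_{23}$ pointwise. Restricted to any smooth fiber, $\epsilon \psi \epsilon^{-1}$ is then an order-$2$ automorphism of an elliptic curve fixing the origin, so it equals $[-1]$; hence it coincides with $\psi$ on a dense open subset of $X$, and therefore $\epsilon \psi \epsilon^{-1} = \psi$.

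For the commutation with $f$: again by Lemma~\ref{lem-eps}, $\epsilon(D_1) = D_1'$, so $\epsilon$ preserves $\Phi_{|D_1|}$, and $\epsilon(C_{21}) = C_{12}$ is the $2$-torsion section by Lemma~\ref{lem-tau1}. Since $\epsilon$ preserves $\Phi_{|D_1|}$ and $f \in \MW(\Phi_{|D_1|})$ has no fixed point on smooth fibers, the conjugate $\epsilon f \epsilon^{-1}$ is again a translation in $\MW(\Phi_{|D_1|})$, and its translation vector can be read off from its action on the zero section:
$$\epsilon f \epsilon^{-1}(C_{21}) \;=\; \epsilon\bigl(f(C_{12})\bigr) \;=\; \epsilon(C_{12} + C_{03}) \;=\; \epsilon(C_{30}) \;=\; C_{03},$$
where I use that $\epsilon^{-1} = \epsilon$, the relation $C_{12} + C_{03} = C_{30}$ from Lemma~\ref{lem-tau1}, and $\epsilon(C_{30}) = C_{03}$ from Lemma~\ref{lem-eps}. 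Hence $\epsilon f \epsilon^{-1}$ is the translation by $C_{03}$, i.e., equals $f$.

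The main subtlety is that $\epsilon$ does not preserve the zero section of either Jacobian fibration, shifting it instead to the unique non-trivial $2$-torsion section; but Lemmas~\ref{lem-tau1} and~\ref{lem-tau} provide exactly the Mordell--Weil relations needed for this $2$-torsion shift to cancel out when computing $\epsilon f \epsilon^{-1}$ and $\epsilon \psi \epsilon^{-1}$.
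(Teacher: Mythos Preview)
Your proof is correct and follows essentially the same strategy as the paper's: reduce to showing that $\epsilon$ commutes with $f$ and with $\psi$ separately, using Lemma~\ref{lem-eps} together with the $2$-torsion relations of Lemmas~\ref{lem-tau1} and~\ref{lem-tau}. The paper packages the verification slightly differently---it forms the commutators $\tilde f = f^{-1}\epsilon^{-1}f\epsilon$ and $\tilde\psi = \epsilon^{-1}\psi^{-1}\epsilon\psi$, observes each is a symplectic element of $\Aut(X/B_i)$ fixing a section, and invokes Lemma~\ref{lem-symptrans} to conclude it is the identity---but this is the same computation as your direct evaluation of $\epsilon f\epsilon^{-1}$ and $\epsilon\psi\epsilon^{-1}$.
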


\begin{proof} 

Recall that $D_i$ and $D'_i$ are the two fibers of $\Phi_{|D_i|}$ of type $\mathrm{I}_8$ (stemming from our $(-2)$-curves configuration). By Lemma~\ref{lem-eps},
        we have $\epsilon(D_i) = D'_i$ for $i \in \Set{1,2}$, so $\epsilon$ preserves the fibration $\Phi_{|D_i|}$, while acting on the base $B_i$ of $\Phi_{|D_i|}$ as a non-trivial involution.
        It follows that 
		$$\tilde{f} :=  f^{-1} \circ \epsilon^{-1} \circ f \circ \epsilon \in {\rm Aut}(X/B_1),$$
		$$\tilde{\psi} := \epsilon^{-1} \circ \psi^{-1} \circ \epsilon \circ \psi \in {\rm Aut}(X/B_2).$$
		(See (\ref{def:f}) and (\ref{def:psi}) for the definitions of $f$ and $\psi$.)
		Since $f$ is symplectic, while $\psi$ and $\epsilon$ are antisymplectic, we have
		$$\tilde{f}^{*}\omega_X = \omega_X \ \ \text{and} \ \ \tilde{\psi}^{*}\omega_X = \omega_X.$$

		Recall that $C_{12}$ is a $2$-torsion element of ${\rm MW}(\Phi_{|D_1|})$ with respect to the zero section $C_{21}$ by Lemma~\ref{lem-tau1}, 
		and that $f$ is the translation by $C_{03}$. Thus, under 
		$\tilde{f} :=  f^{-1} \circ \epsilon^{-1} \circ f \circ \epsilon$,  
		we have: 
		$$C_{12} \mapsto C_{21} \mapsto C_{03} \mapsto C_{30} \mapsto C_{30} - C_{03} = C_{12},$$
		where the last equality follows from Lemma~\ref{lem-tau1}. 
		As $\tilde{f}$ is symplectic and fixes a section $C_{12}$,
		by Lemma~\ref{lem-symptrans}, we have 
		$\tilde{f} = \id_X$, namely 
		\begin{equation}\label{eq43}
			f \circ \epsilon = \epsilon \circ f.
		\end{equation}
		
		Similarly, as $C_{32}$ is a $2$-torsion element of ${\rm MW}(\Phi_{|D_2|})$ with respect the zero section $C_{23}$ by Lemma~\ref{lem-tau}, 
		and $\psi$ is the inversion with respect to the zero section $C_{23}$, 
		we have 
		$$\psi(C_{23}) = C_{23}, \ \ \psi(C_{32}) = C_{32}.$$ 
		Thus, under $\tilde{\psi} := \epsilon^{-1} \circ \psi^{-1} \circ \epsilon \circ \psi$, we have
		$$C_{23} \mapsto C_{23} \mapsto C_{32} \mapsto C_{32} \mapsto C_{23}.$$ 
		Again by Lemma~\ref{lem-symptrans}, we have $\tilde{\psi} = \id_X$, namely 
		\begin{equation}\label{eq44}
			\psi \circ \epsilon = \epsilon \circ \psi.
		\end{equation}
		By (\ref{eq43}) and (\ref{eq44}), $\psi_n = f^{-4n} \circ \psi \circ f^{4n}$ also commutes with $\epsilon$. Hence $\psi_n \in {\rm Ine}(X, C_{23})$ descends to an element of ${\rm Ine} (Z, D_{23})$. 
\end{proof}

	\section{Surfaces with infinitely many real forms}\label{sct5}
	
	We keep the same notations as in Sections~\ref{sec-Kummer} and~\ref{sct4}.
	In this section,
	we work over $\bk = \C$
	and make
	the following assumption on 
	the parameters
	$s,t$ in~\eqref{eq31} and~\eqref{eq32} defining $E$ and $F$.
	
	\begin{assumption}\label{ass31} $s, t$ are two real numbers which are algebraically independent over $\Q$.
	\end{assumption}
	
	There are many such $s$ and $t$. 
	As $s$ and $t$ are algebraically independent 
	over $\Q$, 
	the elliptic curves $E$ and $F$ are not isogenous.

	As $s,t$ are \textit{real numbers} 
	and the constructions in Section~\ref{sct4} 
	are compatible with field extensions,
	the curves $E$ and $F$ each have a natural real structure,
	denoted by $\imath_E$ and $\imath_F$, 
	and thus each variety $V$ in Section~\ref{sct4} has an induced privileged real structure, denoted by $\imath_V$.
	
	\subsection{Surface birational to an Enriques surface with infinitely many real forms}\label{sct3sub3}

	Let $A \in D_{23} = \pi(C_{23})$. 
	We will work under the following assumption.

	\begin{assumption}\label{ass32} 
	\hfill

		\begin{enumerate}
			\item $A$ is a real point of $D_{23}$, in the sense that $A \in {D_{23}}^{\imath_{Z}}$; 
			\item $A \not\in D_{23} \cap C$ for any irreducible curve $C \subset Z$ with $C \not= D_{23}$ and $C^2 < 0$;
			\item $A \not\in {D_{23}}^{g}$ for any $g \in {\rm Dec}(Z, D_{23}) \setminus {\rm Ine}(Z, D_{23})$.
		\end{enumerate}
	\end{assumption}
	
	The next lemma, which is similar to \cite[Lemma 2.4]{DOY23}, is also crucial in this paper.
	
	\begin{lemma}\label{lem33} 
	There are uncountably many points $A \in D_{23}$ satisfying Assumption \ref{ass32}.
	\end{lemma}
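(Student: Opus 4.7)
The plan is to show that the real locus $D_{23}^{\imath_{Z}}$ is uncountable, while conditions (2) and (3) of Assumption~\ref{ass32} each exclude at most countably many points of $D_{23}$; subtracting two countable sets from an uncountable one then leaves uncountably many admissible $A$.

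To establish uncountability of $D_{23}^{\imath_Z}$, I would first observe that since $s,t \in \R$, the $2$-torsion point $(\tau_2, \tau_3') \in E \times F$ is real, the tangent space $T_{E \times F,(\tau_2,\tau_3')}$ is defined over $\R$, and hence the exceptional curve $C_{23} \subset X$ is an $\R$-form of $\P^1$. Its real locus $C_{23}^{\imath_X}$ is nonempty, as the intersections $C_{23} \cap E_2$ and $C_{23} \cap F_3$ are manifestly real points, so $C_{23} \simeq \P^1_\R$ and its real locus is homeomorphic to $\R\P^1$, in particular uncountable. By Lemma~\ref{lem-eps}, $\epsilon(C_{23}) = C_{32} \neq C_{23}$, so $\epsilon$ has no fixed point on $C_{23}$ and the quotient map $\pi\colon X \to Z$ restricts to an $\R$-isomorphism $C_{23} \xrightarrow{\sim} D_{23}$. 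Consequently $D_{23}^{\imath_Z}$ is an $\R\P^1$, hence uncountable.

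For condition (2), I would invoke the standard observation that a projective surface carries only countably many irreducible curves of negative self-intersection: each such curve is numerically rigid (two distinct irreducible curves with the same negative class would satisfy $C_1 \cdot C_2 \ge 0 > C_i^2$), and $\NS(Z)$ is countable because it is finitely generated. Every such $C \neq D_{23}$ meets $D_{23}$ in at most $C \cdot D_{23} < \infty$ points, so (2) excludes only a countable subset. For (3), the key input is that $\Aut(Z)$ is countable: since the étale K3 double cover $X \to Z$ gives an injection $H^0(Z, T_Z) \hookrightarrow H^0(X, T_X) = 0$, the identity component $\Aut^0(Z)$ is trivial, so $\Aut(Z)$ is a discrete (hence countable) locally algebraic group. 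Then $\Dec(Z, D_{23})/\Ine(Z, D_{23})$ embeds faithfully into $\Aut(D_{23}) \simeq \PGL_2(\C)$ as a countable subgroup, and each non-identity element of $\PGL_2(\C)$ fixes at most two points of $\P^1$, so (3) likewise rules out only countably many points.

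The only subtlety is step (1): a priori $D_{23}$ could be an anisotropic smooth real conic (a $\P^1_\R$-form without any $\R$-points), in which case $D_{23}^{\imath_Z}$ would be empty. Exhibiting an explicit real point on $C_{23}$ (coming from the real intersection $C_{23} \cap E_2$) and using the $\R$-isomorphism $\pi|_{C_{23}}\colon C_{23} \xrightarrow{\sim} D_{23}$ forces $D_{23} \simeq \P^1_\R$ and resolves this issue. Once (1) is in hand, the counting arguments for (2) and (3) are routine.
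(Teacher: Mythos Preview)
Your proof is correct and follows essentially the same strategy as the paper: show that the real locus $D_{23}^{\imath_Z}$ is uncountable, while conditions (2) and (3) each exclude only countably many points. The paper's proof is terser; in particular it simply asserts that ``$D_{23}^{\imath_Z}$ is the set of real points on a real rational curve, which is uncountable,'' whereas you carefully rule out the anisotropic-conic possibility by exhibiting an explicit real point on $C_{23}$ and using that $\pi|_{C_{23}}$ is an $\R$-isomorphism onto $D_{23}$ (since $\epsilon(C_{23})=C_{32}\neq C_{23}$). Your justifications for the countability in (2) and (3) are likewise more detailed than the paper's but amount to the same argument.
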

	
	\begin{proof} 
	Note that there are at most countably many irreducible curves $C \not= D_{23}$ on $Z$ with $C^2 < 0$, 
	and thus the points $B \in D_{23}$ which are in the union of $D_{23} \cap C$ (for all such curves $C$) are countable. 
	Note also that ${\rm Aut} (Z)$ is discrete, hence countable, 
	and $D_{23}^{g}$ is at most two points for each $g \in {\rm Dec}(Z, D_{23}) \setminus {\rm Ine}(Z, D_{23})$ because $D_{23} \simeq {\mathbb P}^1$. 
	Therefore the points $B \in D_{23}$ which are in the union of all ${D_{23}}^{g}$, for $g \in {\rm Dec}(Z, D_{23}) \setminus {\rm Ine}(Z, D_{23})$, are also countable. 
	On the other hand, ${D_{23}}^{\imath_{Z}}$ is the set of real points on 
	a real rational curve, which 
	is uncountable. 
	Hence there are uncountably many points $A \in D_{23}$ satisfying Assumption~\ref{ass32}. 
	\end{proof}

	Our main theorem is the following,
	which implies Theorem~\ref{thm11}.
	
	\begin{thm}\label{thm32} 
	Let $s$, $t$ be as in Assumption \ref{ass31} and let $A \in D_{23} \subset Z$ be as in Assumption \ref{ass32}. Let $\mu : Y \to Z$ be the blow-up of $Z$ at $A$. 
	Then 
	\begin{enumerate}
	    \item $Y$ has infinitely many mutually non-isomorphic 
		real forms. 
		\item $\Aut(Y)$ is not finitely generated.
	\end{enumerate}
	\end{thm}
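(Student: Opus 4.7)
My plan is to construct, for each $n \in \Z$, a real structure $\imath_n$ on $Y$ via the descent of $\psi_n$, to show that infinitely many of these give pairwise inequivalent real forms by reducing to Proposition~\ref{prop43}(2) through the \'etale cover $\pi: X \to Z$, and finally to deduce that $\Aut(Y)$ is not finitely generated from a restriction homomorphism $\Aut(Y) \to \Aut(E_A)$ combined with Malcev's theorem on finitely generated solvable linear groups.

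For the setup, since $s, t \in \R$ by Assumption~\ref{ass31} and $A \in D_{23}$ is a real point by Assumption~\ref{ass32}(1), the natural real structure on $Z$ lifts to a real structure $\imath_Y$ on $Y$. Each $\psi_n \in \Aut(X)$ is defined over $\R$ and, by Lemma~\ref{lem-descent-psin}, descends to an involution $\bar\psi_n \in \Ine(Z, D_{23})$. Since $\bar\psi_n$ fixes $A \in D_{23}$, it lifts to a $\R$-defined involution $\tilde\psi_n \in \Aut(Y)$ commuting with $\imath_Y$, and thus yields a 1-cocycle $c \mapsto \tilde\psi_n$ in $Z^1(G_{\C/\R}, \Aut(Y))$ corresponding via Theorem~\ref{thm21} to a twisted real structure $\imath_n := \imath_Y \circ \tilde\psi_n$ on $Y$.

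For (1), by a standard cocycle manipulation (together with Proposition~\ref{prop21}(4) and the triviality of the Galois action on each $\tilde\psi_n$), the equivalence of $\imath_n$ and $\imath_m$ amounts to a conjugacy relation between $\tilde\psi_n$ and $\tilde\psi_m$ in $\Aut(Y)$. To control such conjugations, I will descend to $X$: by Assumption~\ref{ass32}(2), the strict transform $\tilde D_{23}$ is the unique $(-3)$-curve on $Y$ (since $D_{23}$ is the only negative curve on $Z$ through $A$), so $\Aut(Y)$ preserves both $\tilde D_{23}$ and $E_A$, and every $\tilde h \in \Aut(Y)$ descends to $\bar h \in \Aut(Z)$ fixing $A$ and preserving $D_{23}$; Assumption~\ref{ass32}(3) then forces $\bar h \in \Ine(Z, D_{23})$. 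Lifting through $\pi$ (composing with $\epsilon$ if necessary) yields $h_X \in \Ine(X, C_{23})$, and the conjugation $\tilde h \tilde\psi_n \tilde h^{-1} = \tilde\psi_m$ lifts to $h_X \psi_n h_X^{-1} \in \{\psi_m, \epsilon\psi_m\}$. The case $\epsilon\psi_m$ will be excluded by the symplectic parity (both $\psi_n, \psi_m$ are antisymplectic, while $\epsilon\psi_m$ is symplectic), so Proposition~\ref{prop43}(2) will conclude that for each fixed $n$ only finitely many $m$'s satisfy the relation, proving (1).

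For (2), I will use the restriction homomorphism $\rho: \Aut(Y) \to \Aut(E_A) \simeq \PGL(2, \C)$. Since $\Aut(Y)$ preserves $E_A$ and the point $\tilde D_{23} \cap E_A$, the image $\rho(\Aut(Y))$ lies in the stabilizer of this point, which is the solvable affine group $\G_a \rtimes \G_m$. A local computation using~\eqref{eqn-psicoord} together with the action of $f^{4n}$ at $C_{23} \cap E_2$ will show that $\rho(\tilde\psi_n \tilde\psi_0)$ is a translation on $E_A$ whose amount depends affinely on $r^n$. By the transcendence of $r$ (Proposition~\ref{pro-nocst}), the set $\{r^n : n \in \Z\}$ is $\Q$-linearly independent in $(\C, +)$, so $\rho(\langle \tilde\psi_n \tilde\psi_0 : n \in \Z \rangle)$ has infinite $\Q$-rank and is not finitely generated. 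Were $\Aut(Y)$ itself finitely generated, then $\rho(\Aut(Y))$ would be a finitely generated solvable linear group, hence polycyclic-by-finite by Malcev's theorem, and every one of its subgroups would also be finitely generated---a contradiction. The hardest parts of this plan will be the descent-lifting argument of the previous paragraph (which requires careful tracking of the $\epsilon$-action through $\pi$ and the symplectic parity check) and the verification of the genuine $r^n$-dependence of $\rho(\tilde\psi_n \tilde\psi_0)$, which reduces to a fine local analysis of $\bar\psi_n$ near $A \in D_{23}$.
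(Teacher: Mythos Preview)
Your approach to part~(1) is essentially the paper's: you identify $\Aut(Y)$ with $\Ine(Z,D_{23})$ via Assumption~\ref{ass32}, lift to $\Ine(X,C_{23})$ through the K3 cover, and invoke Proposition~\ref{prop43}(2). One point you underplay is that reducing cocycle equivalence to honest conjugacy requires the Galois action on \emph{all} of $\Aut(Y)$ to be trivial, not merely on each $\tilde\psi_n$; the paper establishes this separately (Lemma~\ref{lem-real}) by observing that $\Pic(X)\otimes\Q$ is spanned by the twenty-four real $(-2)$-curves and that $\imath_X^*\omega_X=\omega_X$, so $\imath_X$ commutes with every automorphism of $X$.

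Your approach to part~(2) diverges from the paper's and has two genuine gaps. You restrict to the exceptional curve $E_A\subset Y$, whereas the paper restricts to $E_2\subset X$ via the inclusion $\Aut(Y)\subset\Ine(X,C_{23})$. The advantage of the paper's choice is that the action of $\psi_n$ on $E_2$ is already explicit, $x\mapsto r^{-n}-x$ (equation~\eqref{eqn-psicoord}), so $\psi_m\psi_n$ acts as the translation $x\mapsto x+(r^{-m}-r^{-n})$, and transcendence of $r$ immediately yields a non-finitely-generated image in $(\C,+)$. By contrast, the action of $\tilde\psi_n$ on $E_A=\P(T_{Z,A})$ is governed by the off-diagonal entry of $D\psi_n$ at the lift $A'\in C_{23}$; this depends on how the transported fibration $f^{-4n}_*\Phi_{|D_2|}$ meets $C_{23}$ at $A'$, and your claimed ``affine dependence on $r^n$'' is neither obvious nor supplied by anything in the paper.

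More seriously, your concluding step is wrong as stated: Malcev's theorem says that solvable subgroups of $\GL_n(\Z)$ are polycyclic, but a finitely generated solvable subgroup of $\GL_n(\C)$ need not be; for instance $\langle x\mapsto x+1,\ x\mapsto 2x\rangle\subset\G_a\rtimes\G_m$ is finitely generated and solvable yet contains the non-finitely-generated subgroup $\Z[1/2]$. The paper circumvents this by first passing to the finite-index symplectic subgroup $\Aut^s(Y)$: the symplectic condition forces both eigenvalues of the tangent action at $P_{23}=E_2\cap C_{23}$ to equal $1$, so the restriction to $E_2$ lands in the \emph{abelian} group $(\C,+)$ of translations, where the elementary fact that subgroups of finitely generated abelian groups are finitely generated suffices. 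Schreier's lemma then transfers non-finite-generation from $\Aut^s(Y)$ to $\Aut(Y)$.
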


	\begin{remark}\label{rem_dense} By construction, $Y$ in Theorem \ref{thm32} is parametrized by the three real parameters 
	\[ (s, t, A)
	\]
	which move in a dense subset of $\R^3$. 
	\end{remark}

	We will reduce the proof to a problem on the existence of a set of involutions on $X$ with certain properties 
	(Lemma~\ref{lem-critinf}), 
	which we will solve based on results proven in
	Sections~\ref{sec-Kummer} and~\ref{sct4}.

    \ssec{Lifting $\Aut(Y)$ to $\Ine(X,C_{23})$}	
	
	Note that ${\rm Bir} (Z) = {\rm Aut}(Z)$ as $Z$ is a minimal projective smooth surface. Let $E_A$ be the exceptional curve of the blow-up $\mu : Y \to Z$ at the point $A\in Z$.
	Then $|2K_Y| = \{2E_A\}$. Thus under the natural inclusion 
	$${\rm Aut}(Y) \subset {\rm Bir} (Z) = {\rm Aut}(Z),$$
	induced from $\mu$, we have
	$${\rm Aut}(Y) = {\rm Dec}(Y, E_A) = {\rm Ine}(Z, A).$$
	
	If $g \in {\rm Dec} (Z, D_{23})$, then $g$ lifts in two ways to ${\rm Aut}(X)$. Namely, if we write one of them as $\tilde{g}$, then they are $\tilde{g}$ and $\epsilon \circ \tilde{g}$. 
	Note that $\epsilon(C_{32}) = C_{23}$ 
	by Lemma~\ref{lem-eps} (2),
	so $\tilde{g}$ satisfies either $\tilde{g}(C_{23}) = C_{32}$ or $C_{23}$, and hence $\epsilon \circ \tilde{g}(C_{23}) = C_{23}$ or $C_{32}$, respectively. 
	We thus identify $\Dec(Z,D_{23})$ with a subgroup of $\Dec(X,C_{23})$
	through 
	$${\rm Dec} (Z, D_{23}) \hto {\rm Dec} (X, C_{23}) \subset {\rm Aut}(X),$$
	sending $g \in \Dec(Z,D_{23})$ to its unique lifting $\ti{g} \in \Aut(X)$
	satisfying $\ti{g}(C_{23}) = C_{23}$.
	Under such an identification, we have
	$${\rm Ine} (Z, D_{23}) \subset {\rm Ine} (X, C_{23}) \subset {\rm Aut}(X).$$
	
	\begin{lemma}\label{lem-liftIneX} 
	Suppose that $A \in D_{23}$ satisfies Assumption \ref{ass32}.
	We have 
			$${\rm Aut}(Y) = {\rm Ine}(Z, A) = {\rm Ine}(Z, D_{23}) \subset {\rm Ine} (X, C_{23}).$$
	\end{lemma}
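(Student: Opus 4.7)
The equality $\Aut(Y) = \Ine(Z,A)$ has already been established in the paragraph preceding the lemma (via $|2K_Y| = \{2E_A\}$), so the plan is to prove the two remaining statements separately: the equality $\Ine(Z,A) = \Ine(Z,D_{23})$ at the level of $Z$, and then the inclusion $\Ine(Z,D_{23}) \subset \Ine(X,C_{23})$ by lifting through the étale double cover $\pi\colon X\to Z$.

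For the equality $\Ine(Z,A) = \Ine(Z,D_{23})$, the inclusion $\supset$ is immediate since $A\in D_{23}$. For the reverse, I would take $g\in \Ine(Z,A)$ and first observe that $D_{23}$ is a smooth rational curve with $D_{23}^2=-2$: indeed, by Theorem \ref{thm31} the involution $\epsilon$ acts freely, so $\pi$ is étale; since $\epsilon(C_{23})=C_{32}$ with $C_{23}\cap C_{32}=\varnothing$, the restriction $\pi|_{C_{23}}\colon C_{23}\to D_{23}$ is a bijective étale morphism of smooth curves, hence an isomorphism, and $\pi^{\ast}D_{23}=C_{23}+C_{32}$ gives $2D_{23}^2=-4$. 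Then Assumption \ref{ass32}(2) identifies $D_{23}$ as the unique irreducible curve of negative self-intersection through $A$, so $g(D_{23})=D_{23}$, that is, $g\in \Dec(Z,D_{23})$. Assumption \ref{ass32}(3) then rules out the possibility $g\in \Dec(Z,D_{23})\setminus\Ine(Z,D_{23})$, because such a $g$ would fix at most the two points of $D_{23}^{g}$, none of which is $A$; this forces $g\in\Ine(Z,D_{23})$.

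For the inclusion $\Ine(Z,D_{23})\subset\Ine(X,C_{23})$, I would fix $g\in \Ine(Z,D_{23})$ and let $\ti{g}\in\Aut(X)$ be its unique lift with $\ti{g}(C_{23})=C_{23}$ (as in the identification fixed just before the lemma; note that the other lift $\epsilon\circ\ti{g}$ sends $C_{23}$ to $C_{32}$ by Lemma \ref{lem-eps}). Restricting the identity $\pi\circ \ti{g}=g\circ \pi$ to $C_{23}$ and using that $g|_{D_{23}}=\id_{D_{23}}$ gives $\pi|_{C_{23}}\circ \ti{g}|_{C_{23}}=\pi|_{C_{23}}$; since $\pi|_{C_{23}}\colon C_{23}\to D_{23}$ is an isomorphism, this forces $\ti{g}|_{C_{23}}=\id_{C_{23}}$, i.e., $\ti{g}\in \Ine(X,C_{23})$.

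The delicate part of the argument is really the equality $\Ine(Z,A)=\Ine(Z,D_{23})$: a priori fixing a single point of $D_{23}$ gives a potentially larger group than the pointwise stabilizer of the whole curve, and it is the combination of the negative-self-intersection genericity of $A$ in Assumption \ref{ass32}(2) together with the avoidance of isolated $\Dec$-fixed points in Assumption \ref{ass32}(3) that closes this gap. The existence of sufficiently many such $A$, which makes Assumption \ref{ass32} a non-vacuous hypothesis, is exactly what Lemma \ref{lem33} provides; the remainder of the proof is essentially a formal transport through the étale cover.
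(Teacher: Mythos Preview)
Your proof is correct and follows essentially the same approach as the paper's. The paper treats the inclusion $\Ine(Z,D_{23})\subset\Ine(X,C_{23})$ and the value $D_{23}^2=-2$ as already observed in the preceding paragraph, whereas you supply explicit arguments for both; the core step---using Assumption~\ref{ass32}(2) to force $g(D_{23})=D_{23}$ from $A\in D_{23}\cap g(D_{23})$ and $g(D_{23})^2<0$, then Assumption~\ref{ass32}(3) to upgrade $g\in\Dec(Z,D_{23})$ to $g\in\Ine(Z,D_{23})$---is identical.
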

	
	\begin{proof} As already remarked, we have 
		$${\rm Aut}(Y) = {\rm Ine}(Z, A), \ \ {\rm Ine}(Z, D_{23}) \subset {\rm Ine} (X, C_{23}).$$
		So, it suffices to show the equality ${\rm Ine}(Z, A) = {\rm Ine}(Z, D_{23})$. 
		
		Since $A \in D_{23}$, we have ${\rm Ine}(Z, D_{23}) \subset {\rm Ine}(Z, A)$. 
		To show the reverse inclusion ${\rm Ine}(Z, A) \subset {\rm Ine}(Z, D_{23})$, 
		let $g \in {\rm Ine}(Z, A)$. 
		Then $A \in D_{23} \cap g(D_{23})$. 
		Since 
		$$g(D_{23})^2 = D_{23}^2 = -2 < 0,$$
		we have $g(D_{23}) = D_{23}$ by Assumption~\ref{ass32} (2). Thus $g \in {\rm Dec} (Z, D_{23})$. As $g \in {\rm Ine}(Z, A)$, we have $A \in {{D}_{23}}^{g}$. 
		Thus $g \in {\rm Ine}(Z, D_{23})$ by 
		Assumption~\ref{ass32} (3). 
	\end{proof}
	
	\ssec{Every automorphism on $Y$ is real}
	
	Recall that we have privileged real structures
	$\imath_{X}$ and $\imath_{Y}$
	on $X$ and $Y$ respectively.
	
	\begin{lemma} \label{lem-real}
	For any $g \in {\rm Aut} (X)$, we have $\imath_{X} \circ g \circ \imath_{X} = g$. In other words, every $g \in {\rm Aut}(X)$ is defined over ${\mathbb R}$, with respect to $\imath_{X}$.
	
	As a consequence,
	the conjugate action of the real structure $\imath_{Y}$ of $Y$
			is trivial on ${\rm Aut}(Y)$.
	\end{lemma}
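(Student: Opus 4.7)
The plan is to use the (strong) Torelli theorem for K3 surfaces: since the natural map $\Aut(X)\hookrightarrow O(H^2(X,\Z))$ is injective, it suffices to prove that $g^*$ commutes with $\imath_X^*$ on $H^2(X,\Z)$. I would analyse this by splitting $H^2(X,\Z)$ (up to a sublattice of finite index) as $\NS(X)\oplus T(X)$ and handling each summand separately.

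On the N\'eron--Severi summand, I claim $\imath_X^*$ is trivial. The $24$ curves $E_i$, $F_j$, $C_{ij}$ are all defined over $\R$, because $E$ and $F$ come from Weierstrass equations with real parameters $s,t$ by Assumption~\ref{ass31}. Hence $\imath_X^*$ fixes each of their classes. A rank count (the $C_{ij}$ span the Kummer lattice of rank $16$, while the $E_i,F_j$ contribute two further linearly independent classes) shows that under Assumption~\ref{ass31}, where $\rho(X)=18$ because $E,F$ are non-isogenous, these $24$ classes span $\NS(X)\otimes\Q$. A $\Z$-linear involution of $\NS(X)$ which is trivial after tensoring with $\Q$ is itself trivial, so $\imath_X^*|_{\NS(X)}=\id$.

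The main obstacle is the transcendental summand. The key point is that Assumption~\ref{ass31} forces the $j$-invariants of $E$ and $F$ (rational functions of $s,t$ with coefficients in $\Q$) to be transcendental and algebraically independent, so $E$ and $F$ are non-CM and mutually non-isogenous; via the standard Hodge-theoretic description of $T(X)$ for a Kummer surface of product type, it follows that $\End_{\mathrm{Hdg}}(T(X))\otimes\Q = \Q$. The image of $\Aut(X)\to\GL(T(X))$ preserves the Hodge structure and consists of lattice isometries, so it is contained in $\{\pm\id\}$. Being scalar, these endomorphisms commute with $\imath_X^*|_{T(X)}$. Combining the two cases gives $\imath_X^*\circ g^*\circ \imath_X^* = g^*$ on $\NS(X)\oplus T(X)$, hence on $H^2(X,\Q)$ by extension of scalars, and hence on $H^2(X,\Z)$ by $\Z$-linearity. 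Torelli then yields $\imath_X \circ g\circ \imath_X = g$.

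For the consequence concerning $Y$: applying the first part to $g=\epsilon$ gives $\imath_X\circ\epsilon\circ\imath_X=\epsilon$, so $\imath_X$ descends to a real structure on $Z=X/\langle\epsilon\rangle$, which coincides with the privileged $\imath_Z$; similarly $\imath_Z$ lifts through $\mu$ to $\imath_Y$ on $Y=\mathrm{Bl}_A Z$ because $A$ is a real point by Assumption~\ref{ass32}(1). Through the identification $\Aut(Y)=\Ine(Z,D_{23})\subset\Ine(X,C_{23})\subset\Aut(X)$ from Lemma~\ref{lem-liftIneX}, conjugation by $\imath_Y$ on $\Aut(Y)$ corresponds to conjugation by $\imath_X$ on the image in $\Aut(X)$, which is trivial by the first part.
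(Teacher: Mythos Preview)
Your argument is correct and follows the same overall strategy as the paper: invoke the injectivity of $\Aut(X)\to O(H^2(X,\Z))$, split into $\NS(X)$ and $T(X)$, and check commutation with $\imath_X^*$ on each piece. The treatment of $\NS(X)$ (the $24$ real curves span $\NS(X)\otimes\Q$, so $\imath_X^*$ is trivial there) and the deduction for $Y$ via the equivariant inclusion $\Aut(Y)\subset\Ine(X,C_{23})$ are identical to the paper.

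The only substantive difference is on $T(X)$. The paper avoids your Hodge-endomorphism computation entirely: it observes that the symplectic form $\omega_X$ (descended from $dx/y\wedge dx'/y'$) is defined over $\R$, so $\imath_X^*\omega_X=\omega_X$; since the action of any automorphism on $T(X)$ is determined by its character on $H^{2,0}(X)$ \cite[Remark~15.1.2]{Hu16}, conjugation by $\imath_X$ does not alter this action. This is shorter and does not use the genericity of $(s,t)$ beyond $\rho(X)=18$. Your route via $\End_{\Hdg}(T(X))\otimes\Q=\Q$ is correct under Assumption~\ref{ass31} (non-CM, non-isogenous $E,F$ force the Mumford--Tate group of $H^1(E)\oplus H^1(F)$ to be the full $\mathrm{SL}_2\times\mathrm{SL}_2$, making $H^1(E)\otimes H^1(F)$ irreducible), but it is a stronger intermediate claim than is needed and would require more justification to be airtight.
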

	
	\begin{proof}
	
    By Assumption~\ref{ass31},
    the elliptic curves $E$ and $F$ are not isogenous, so 
    $$\rho(X) = 18 = \rho(\wt{E \times F})$$
    where $\wt{E \times F}$ 
    is the blowup at the $16$ 
    two-torsion points of $E \times F$ 
    (see e.g.~\cite[Page 389, (1.2)]{Hu16}).
    It follows that
    $\Pic (X) \otimes_{\mathbb Z} {\mathbb Q}$ 
	is generated by
    the 24 smooth rational curves 
	in Figure~\ref{fig1}. 
    As $s,t \in \R$,
    these 24 curves 
	are invariant under $\imath_{X}$.
	Thus for $g \in {\rm Aut}(X)$, the actions of
	$g$ and	$\imath_{X} \circ g \circ \imath_{X}$
	on ${\rm Pic} (X)$ coincide.

	As $H^0(X,\gO_X^2) = \C \cdot \go_X$ and
	$\imath_{X}^*\omega_X = \omega_X$ by construction,
	the actions of $g$ and	
	$\imath_{X} \circ g \circ \imath_{X} \in \Aut(X)$
	also agree on the transcendental part of $H^2(X,\Z)$ 
	(see e.g., \cite[Remark 15.1.2]{Hu16}).
    Since $\Aut(X)$ acts fatihfully on $H^2(X,\Z)$
    (see e.g.~\cite[Proposition 15.2.1]{Hu16}),
    we have 
		$$\imath_{X} \circ g \circ \imath_{X} = g.$$ 
	
	The last statement follows from the facts that $\imath_{X}$ acts on ${\rm Aut}(X)$ as the identity and that the inclusion 
	$\Aut(Y) \subset \Ine(X,C_{23})$
	in Lemma~\ref{lem-liftIneX} 
	is equivariant with respect to the actions defined by
	$\imath_{X}$ and $\imath_{Y}$ by construction. 
	\end{proof}
	
	\ssec{Infinitely many real forms}
	
	\begin{lemma}\label{lem-critinf}
	Assume that there is a set ${\mathcal S} \subset {\rm Ine} (X, C_{23})$ consisting of some involutions on $X$ satisfying the following properties. 
	\begin{enumerate}
	    \item The set of conjugacy classes of ${\mathcal S}$ in ${\rm Ine} (X, C_{23})$ 
			is an infinite set.
		\item Each element of $\cS$ descends to an automorphism on $Y$.
	\end{enumerate}
	Then $Y$ has infinitely many mutually non-isomorphic real forms.
	\end{lemma}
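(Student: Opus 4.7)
The plan is to apply Theorem~\ref{thm21} and translate the claim into a counting problem in Galois cohomology. By that theorem, the set of isomorphism classes of real forms of $Y$ is in bijection with $H^1(G_{\C/\R}, \Aut(Y))$, where $G_{\C/\R}$ acts on $\Aut(Y)$ by conjugation via the privileged real structure $\imath_Y$. Hence it suffices to produce infinitely many classes in this pointed set.

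First I would invoke Lemma~\ref{lem-real}, which asserts that the $G_{\C/\R}$-action on $\Aut(Y)$ is trivial. Proposition~\ref{prop21}(4) then identifies $H^1(G_{\C/\R}, \Aut(Y))$ with the set of $\Aut(Y)$-conjugacy classes of elements of order dividing $2$ in $\Aut(Y)$. The task therefore becomes: exhibit infinitely many pairwise non-conjugate involutions in $\Aut(Y)$.

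By Lemma~\ref{lem-liftIneX}, Assumption~\ref{ass32} yields the inclusion chain
\[
\Aut(Y) \;=\; \Ine(Z, D_{23}) \;\subset\; \Ine(X, C_{23}),
\]
and hypothesis~(2) ensures that each element of $\mathcal{S}$ lies in $\Aut(Y)$, where it remains an involution. Since $\Aut(Y) \subset \Ine(X, C_{23})$, any two elements of $\mathcal{S}$ that are $\Aut(Y)$-conjugate are \emph{a fortiori} $\Ine(X, C_{23})$-conjugate. Hence the assignment sending the $\Aut(Y)$-conjugacy class of $g \in \mathcal{S}$ to its $\Ine(X, C_{23})$-conjugacy class is a well-defined surjection from $\mathcal{S}$ modulo $\Aut(Y)$-conjugacy onto $\mathcal{S}$ modulo $\Ine(X, C_{23})$-conjugacy. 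By hypothesis~(1) the target is infinite, so the source is infinite as well, providing the desired infinitely many non-conjugate involutions in $\Aut(Y)$.

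This lemma is essentially a packaging of the preceding results and presents no serious obstacle in itself. The real difficulty, to be handled separately in the proof of Theorem~\ref{thm32}, will be to verify that the explicit family $\mathcal{S} = \{\psi_n\}_{n \in \mathbb{Z}}$ constructed in Sections~\ref{sec-Kummer} and~\ref{sct4} satisfies both hypotheses: (1) should follow from Proposition~\ref{prop43}(2), which rests on the non-torsion nature of $r(s, t)$ guaranteed by Assumption~\ref{ass31}, while (2) is exactly Lemma~\ref{lem-descent-psin}.
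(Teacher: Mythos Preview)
Your proof is correct and follows essentially the same route as the paper's: invoke Lemma~\ref{lem-real} and Proposition~\ref{prop21}(4) to identify real forms of $Y$ with $\Aut(Y)$-conjugacy classes of involutions, then use the inclusion $\Aut(Y)\subset\Ine(X,C_{23})$ from Lemma~\ref{lem-liftIneX} together with hypothesis~(2) to see $\mathcal S\subset\Aut(Y)$, and finally observe that $\Aut(Y)$-conjugacy refines $\Ine(X,C_{23})$-conjugacy so hypothesis~(1) forces infinitely many classes. The closing paragraph anticipating the application to $\mathcal S=\{\psi_n\}$ is accurate but extraneous to the lemma itself.
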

	
	\begin{proof}
	    Let ${\mathcal S}_Y \subset {\rm Aut}(Y)$ 
		be the set of all involutions (including the trivial one) 
		in $\Aut(Y)$. 
		By Proposition \ref{prop21} (4) and Lemma~\ref{lem-real}, 
		we have a one-to-one correspondence between 
		the real forms on $Y$ up to isomorphisms, 
		and the conjugacy classes of ${\mathcal S}_Y$ with respect to ${\rm Aut}(Y)$. 
		Under the inclusion ${\rm Aut}(Y) \subset {\rm Ine} (X, C_{23})$ in Lemma~\ref{lem-liftIneX},
		we have
		${\mathcal S} \subset {\mathcal S}_Y$ by Assumption (2).
		So the cardinality of the conjugacy classes of ${\mathcal S}_Y$ with respect to ${\rm Aut}(Y)$ is larger than or equal to the cardinality of the conjugacy classes of ${\mathcal S}$ with respect to ${\rm Ine} (X, C_{23})$, which is infinite by Assumption (1). 
		Hence $Y$ has infinitely many mutually non-isomorphic real forms.
	\end{proof}

	\begin{proof}[Proof of Theorem~\ref{thm32} (1)]
	
	Consider the set
		$$\cS = \{\psi_n\,|\, n \in {\mathbb Z}\} \subset {\rm Aut}(X).$$
	constructed in Subsection~\ref{subsec:second_jac}.
	By Proposition~\ref{prop43} (2), 
	$\cS$ satisfies Assumption (1) in Lemma~\ref{lem-critinf}.	
	By Lemma~\ref{lem-descent-psin},
	each $\psi_n$ descends to an automorphism on $Z$,
	fixing $D_{23}$ pointwise. Then by Lemma \ref{lem-liftIneX}, each $\psi_n$ descends to an automorphism on $Y$. 
	Thus $\cS$ satisfies Assumption (2) in Lemma~\ref{lem-critinf}.
	We then conclude by Lemma~\ref{lem-critinf}.
	\end{proof}

\ssec{Non-finite generation}

Finally we prove the non-finite generation of ${\rm Aut}(Y)$. 

\begin{proof}[Proof of Theorem~\ref{thm32} (2)]
Let $\Aut^s(X)$ be the subgroup of $\Aut(X)$ 
preserving a holomorphic symplectic form $\go_X$ of $X$
and let
$$\Ine^s(X,C_{23}) \cnec \Ine(X,C_{23}) \cap \Aut^s(X).$$
Since $\Aut^s(X)$ has finite index in $\Aut(X)$ by \cite[Corollary 15.1.10]{Hu16}, 
identifying $\Aut(Y)$ as a subgroup of $\Aut(X)$ through 
Lemma~\ref{lem-liftIneX}, the subgroup
$$\Aut^s(Y) \cnec \Aut(Y) \cap \Ine^s(X,C_{23}) 
= \Aut(Y) \cap \Aut^s(X)$$
also has finite index in $\Aut(Y)$ (see \cite[(3.13)(i)]{Su82}).

Note that for every $g \in \Aut(X)$, we have 
$$g\left(\bigcup^3_{i=0} (E_i\cup F_i)\right) = \bigcup^3_{i=0} (E_i\cup F_i)$$
by \cite[Lemma 1.4]{Og89}. For every $g \in \Ine(X,C_{23})$, since 
$$P_{23} = E_2 \cap C_{23} \in g(E_2) \cap E_2$$
and 
$E_2$ is the unique irreducible component of $\cup^3_{i=0} (E_i\cup F_i)$ containing the point $P_{23}$, necessarily $g(E_2) = E_2$. This gives rise to a homomorphism
$$\rho: \Ine^s(X,C_{23}) \to \Ine(E_2,P_{23}).$$
As $g \in \Ine(X,C_{23})$ preserves the tangent direction $T_{C_{23},P_{23}}$ and acts trivially on it, we see that, assuming further that $g\in\Ine^s(X,C_{23})$, we get a trivial action on $T_{X,P_{23}}$,
and thus on $T_{E_{2},P_{23}}$.
Hence, under the affine coordinate $x$
on $E_2$ defined by
$$x(E_2 \cap C_{20}) = 0, \ \ x(E_2 \cap C_{21}) = 1,  \ \ x(E_2 \cap C_{23}) = \infty$$
(see (\ref{eqn-xE})), we have
$$\rho(g): x \mapsto x + c.$$
We can therefore identify $\rho(\Aut^s(Y))$ 
with a subgroup $G$ of $(\C,+)$.

Since $\psi:X \to X$ is antisymplectic, each $\psi_n = f^{-4n}\circ \psi \circ f^{4n}$ is antisymplectic as well.
Note that $\psi_n \in \Aut(Y)$ 
as we saw in the proof of Theorem~\ref{thm32}, under the inclusion
$\Aut(Y) \subset \Aut(X)$ mentioned previously). Hence,
we have
$\psi_m\psi_n \in \Aut^s(Y)$
for every $m,n\in \Z$. 
As $\psi_m(x) = \frac{1}{r^m} - x$ by~\eqref{eqn-psicoord}, we have
$$\gO \cnec \Set{\frac{1}{r^n} - \frac{1}{r^m} | m,n\in \Z} \subset G.$$
Viewing the abelian group $\gO$ as a $\Z$-module, the transcendence of $r$ yields that $\gO$ contains infinitely many elements that are $\Z$-linearly independent. Hence, by the structure theorem for finitely generated abelian groups, $\gO$ is not finitely generated.
the subgroup of $G$ generated by $\gO$ is not finitely generated.
By the structure theorem for finitely generated abelian groups, every subgroup of a finitely generated abelian group is finitely generated. So, since $G$ is abelian, $G$ itself cannot be finitely generated. As $G$ is a quotient of $\Aut^s(Y)$, we see that $\Aut^s(Y)$ is not finitely generated.
Finally by Schreier's lemma, since $\Aut^s(Y)$ has finite index in $\Aut(Y)$,
the group $\Aut(Y)$ is not finitely generated either.
\end{proof}

	
	

	
	
\end{document}